
\documentclass[reqno]{amsart}
\usepackage{amsmath,latexsym,amssymb}
\usepackage[mathscr]{eucal}

\DeclareSymbolFont{bbold}{U}{bbold}{m}{n}
\DeclareSymbolFontAlphabet{\mathbbold}{bbold}



\setlength{\hfuzz}{1.5pt}

\theoremstyle{plain}
\newtheorem{theorem}{Theorem}[section]
\newtheorem{lemma}[theorem]{Lemma}
\newtheorem{lemma:dual}[theorem]{Duality Lemma}
\newtheorem{lemma:fdual}[theorem]{Full Duality Lemma}
\newtheorem{lemma:transfer}[theorem]{New-from-old Lemma}
\newtheorem{theorem:transfer}[theorem]{New-from-old Theorem}

\theoremstyle{definition}
\newtheorem{definition}[theorem]{Definition}
\newtheorem{notation}[theorem]{Notation}
\newtheorem{example}[theorem]{Example}
\newtheorem{remark}[theorem]{Remark}
\newtheorem{warning}[theorem]{Warning}
\newtheorem{facts}[theorem]{Facts}
\newtheorem{note}[theorem]{Note}
\newtheorem{algorithm}[theorem]{Algorithm}
\newtheorem{assumptions}[theorem]{Assumptions}

\newenvironment{newlist}
     {\begin{list}{}{\setlength{\labelsep}{0.75em}
                     \setlength{\labelwidth}{1.25em}
                     \setlength{\leftmargin}{2em}
                     \setlength{\itemsep}{1ex}}}
     {\end{list}}

\newcommand{\Ahom}{(hm)}
\newcommand{\Aop}{(op)}
\newcommand{\Aaxi}{(ax)}

\newcommand{\up}[1]{\textup{#1}}


\usepackage{pgf,tikz}
\usetikzlibrary{arrows,calc,positioning,intersections}

\tikzset{%
 shaded/.style={draw, shape=circle, fill=black!35, inner sep=1.4pt},
 unshaded/.style={draw, shape=circle, fill=white, inner sep=1.4pt},
 straight/.style={->, semithick, >=latex, shorten >=5pt, shorten <=5pt},
 curvy/.style={->, semithick, >=latex, shorten >=4pt, shorten <=4pt, looseness=1.2, bend angle=40},
 short/.style={->, semithick, >=latex, shorten >=2pt, shorten <=3pt, looseness=1.4, bend angle=50},
 loopy/.style={->, semithick, >=latex, shorten >=4pt, shorten <=4pt, min distance=18pt},
 order/.style={thin},
 label/.style={shape=rectangle, inner sep=0pt},
 auto}


\newcommand{\A}{{\mathbf A}}
\newcommand{\pb}{{\mathbf p}}
\newcommand{\rb}{{\mathbf r}}
\renewcommand{\sb}{{\mathbf s}}
\newcommand{\X}{{\mathbb X}}
\newcommand{\Y}{{\mathbb Y}}
\newcommand{\Z}{{\mathbb Z}}

\newcommand{\MB}{{\mathbf M}}
\newcommand{\QB}{{\mathbf Q}}

\newcommand{\ThB}{{\mathbf 3}}

\newcommand{\MT}{{\mathbb M}}
\newcommand{\QT}{{\mathbb Q}}
\newcommand{\ST}{{\mathbb S}}
\newcommand{\TwT}{{\mathbbold 2}}
\newcommand{\ThT}{{\mathbbold 3}}

\newcommand{\CA}{{\boldsymbol{\mathscr{A}}}}
\newcommand{\CX}{{\boldsymbol{\mathscr{X}}}}
\newcommand{\CY}{{\boldsymbol{\mathscr{Y}}}}
\newcommand{\CZ}{{\boldsymbol{\mathscr{Z}}}}

\newcommand{\clo}[1]{\operatorname{Clo}_{\mathrm{ep}}(#1)}
\newcommand{\cadef}[1]{\operatorname{Rel}_{\mathrm{ca}}(#1)}
\newcommand{\dom}[1]{\operatorname{dom}(#1)}
\newcommand{\graph}[1]{\operatorname{graph}(#1)}

\newcommand{\ISP}[1]{\operatorname{\mathsf{ISP}}(#1)}
\newcommand{\IScP}[1]{\operatorname{\mathsf{IS_cP}}^{\scriptscriptstyle +}\!(#1)}
\newcommand{\ThuH}[1]{\operatorname{Th_{uH}}(#1)}

\renewcommand{\And}{\ {\&}\ }
\newcommand{\bigand}{\mathop{\hbox{\upshape\Large\&}}}

\newcommand{\psigma}[1]{\sigma^{\mathbf{#1}^\sharp}}
\newcommand{\rel}[2]{\mathrm{pr}_{#1}(#2)}
\newcommand{\T}{{\mathscr T}}

\renewcommand{\ge}{\geqslant}
\renewcommand{\le}{\leqslant}
\renewcommand{\phi}{\varphi}

\DeclareMathOperator{\Fix}{Fix}
\newcommand{\rhom}[1]{\mathrm{hom}_{#1}}
\newcommand{\Set}{\textsc{Set}}
\newcommand{\op}{\mathrm{op}}
\newcommand{\id}{\mathrm{id}}


\hyphenation{dual-ises}
\hyphenation{quasi-variety}


\begin{document}

\title{New-from-old full dualities via axiomatisation}

 \author{Brian A. Davey}
 \address{Department of Mathematics and Statistics\\La Trobe University\\Victoria 3086, Australia}
 \email{B.Davey@latrobe.edu.au}

 \author{Jane G. Pitkethly}
 \address{Department of Mathematics and Statistics\\La Trobe University\\Victoria 3086, Australia}
 \email{J.Pitkethly@latrobe.edu.au}

 \author{Ross Willard}
 \address{Department of Pure Mathematics\\University of Waterloo\\Ontario N2L 3G1, Canada}
 \email{rdwillar@uwaterloo.ca}

 \subjclass[2010]{08C20, 
                  08C15, 
                  03C07} 

 \keywords{Natural duality, full duality, alter ego, universal Horn axiomatisation}

 \thanks{The third author was supported by a Discovery Grant from NSERC, Canada.}

\begin{abstract}
We study different full dualities based on the same finite algebra.
Our main theorem gives conditions on two different alter egos of a finite
algebra under which, if one yields a full duality, then the other does too.
We use this theorem to obtain a better understanding of several important
examples from the theory of natural dualities.
We also clarify what it means for two full dualities based on the same
finite algebra to be different.
Throughout the paper, a fundamental role is played by the universal
Horn theory of the dual categories.
\end{abstract}

\maketitle


\section{Introduction}

This paper is a contribution to our understanding of full dualities.
We show how to obtain new full dualities from an existing full duality using
a universal Horn axiomatisation of the dual category. This provides a systematic
technique for finding full dualities that can be used to `rediscover' several
important but previously ad-hoc counterexamples from the theory of natural dualities.

In this introduction, we first motivate the theory of natural dualities within
a setting appropriate to a reader with a background in category theory rather
than universal algebra. We then describe the problems in which we are interested
and summarise our results.

The rest of the paper is structured as follows:
We give a focused introduction to full dualities in
Sections~\ref{sec:fulldualities}--\ref{sec:alteregos}.
In preparation for the proof of our main theorem,
we present a motivating example in Section~\ref{sec:three},
and then sketch the relevant universal Horn logic for signatures
that include partial operations in Section~\ref{sec:logic}.
The main theorem (Theorem~\ref{thm:trans})
is proved over Sections~\ref{sec:sharp}--\ref{sec:trans}.
Corollaries and applications of this theorem are developed in
Sections~\ref{sec:thm}--\ref{sec:end}.

\subsection*{Our setting}

The results presented here arise from the study of
`structural dual equivalences' (known as \emph{natural dualities})
for certain concrete categories over $\Set$. The categories that we wish to dualise
are quasivarieties~$\CA$ of $\Sigma$-algebras (with the usual morphisms),
for some finitary functional signature~$\Sigma$, with the added requirement that
$\CA$ is generated by a finite algebra~$\MB$ in the model-theoretic sense
(meaning that $\MB$ is a cogenerator of~$\CA$ and its underlying set is finite).
The theory of natural dualities has its roots in the early 1980s when Davey and Werner set
down the basic theory~\cite{DW:old}. The state of the theory up to the late 1990s
is presented in the text by Clark and Davey~\cite{CD:book}.

\subsubsection*{The primary goal}

As observed by Isbell~\cite{Isbell}, an adjoint connection between
two categories is normally induced by an `object living in both categories'.
Accordingly, starting from such $\CA$ and $\MB$ as described above,
the theory follows a standard recipe and seeks
to place $\CA$ in a dual adjunction by finding
another concrete category $\CZ$ and an object $\MT$ of~$\CZ$ with the same
underlying set as~$\MB$ such that `$\MT$~commutes with~$\MB$';
see Johnstone~\cite[VI.4]{Johnstone}.
The object $\MT$ is known as an \emph{alter ego} of~$\MB$.

Under the right conditions (see, e.g., Porst and Tholen~\cite[1-C]{PT}),
there exist concrete functors
\[
D\colon \CA\to\CZ^\op \quad\text{and}\quad E\colon \CZ\to\CA^\op
\]
represented by
\[
\rhom\CA(-,\MB)\colon \CA\to\Set^\op \quad\text{and}\quad \rhom\CZ(-,\MT)\colon \CZ\to\Set^\op,
\]
respectively, which are part of a dual adjunction between $\CA$ and~$\CZ$.
Letting $\eta\colon \id_\CA \to ED$ and $\varepsilon\colon \id_\CZ \to DE$
denote the units of the adjunction, we obtain a dual equivalence between the fixed subcategories
\begin{align*}
 \Fix\eta &= \{\, \A \in \CA \mid \eta_\A \text{ is an isomorphism} \,\} \quad\text{and} \\
 \Fix\varepsilon &= \{\, \Z \in \CZ \mid \varepsilon_\Z \text{ is an isomorphism} \,\}
\end{align*}
in the usual way (Lambek and Scott~\cite[Prop.~4.2]{LS}); see Section~\ref{sec:fulldualities}
below for details. The primary goal is to find $\CZ$ and $\MT$ such that $\Fix\eta = \CA$.

\subsubsection*{Our choice of dual categories}

A wealth of examples guide our choice for the concrete categories~$\CZ$.
We list just three:
\begin{itemize}
\item
Stone duality~\cite{S:bool} between Boolean algebras and Boolean spaces
(that is, compact totally disconnected spaces, also known as Stone spaces);
\item
Hofmann--Mislove--Stralka duality~\cite{HMS:semi} between unital semilattices
and Bool\-ean topological unital semilattices;
\item
Priestley duality~\cite{P:dist1} between bounded distributive lattices and
Priestley spaces (that is, compact totally order-disconnected ordered spaces).
\end{itemize}
Motivated by examples such as these, and following Clark and Krauss~\cite{CK:topqv}
and Davey and Werner~\cite{DW:old}, we confine our search to concrete categories
$\CZ$ of the following special kind:
for some finitary signature~$\Delta$, the category $\CZ$ consists of all Boolean
spaces enriched with $\Delta$-structure that is continuous (for
operations) and closed (for relations) with respect to the topology.
This restriction to such categories $\CZ$ is severe, and makes the entire project
impossible for some quasivarieties~$\CA$ (for example, the quasivariety of implication
algebras \cite[pp.~148--151]{DW:old}).

We say that $\CA$ is \emph{dualisable} (in our strict sense) if,
for some (equivalently, for every~\cite{DW:ind,S:dual})
finite cogenerator~$\MB$, there exists a category $\CZ$ of this kind
containing an alter ego $\MT$ of~$\MB$ such that $\Fix\eta = \CA$.
In this setting, the requirement that `$\MT$~commutes with~$\MB$' becomes
`$\MT$~is compatible with~$\MB$'; see Section~\ref{sec:fulldualities} for the formal definition.

The categories $\CZ$ that we consider have
concrete powers and an internal notion of `induced structural subobject',
so that if $\A$ is an object in~$\CA$ with underlying set~$A$,
then $D(\A)$ is the induced substructure of $\MT^A$ with topologically closed
underlying set $\rhom\CA(\A,\MB)$. Hence, each object in $\Fix\varepsilon$ is
isomorphic to a topologically closed induced substructure of a non-zero power of~$\MT$.
We summarise this observation by writing $\Fix\varepsilon \subseteq \IScP\MT$.
Following \cite{CK:topqv}, we call $\IScP\MT$ the \emph{topological quasivariety}
generated by~$\MT$. Again guided by examples like the three listed above,
we identify $\IScP\MT$ as a `structurally simple' subcategory of~$\CZ$.

Being greedy, we aim to find $\CZ$ and $\MT$ that not only
satisfy $\Fix\eta = \CA = \ISP\MB$, but also satisfy $\Fix\varepsilon = \IScP\MT$.
In this case, we deem $\Fix\varepsilon$ to be adequately understood
and say that $\CA$ is \emph{fully dualised} within~$\CZ$ via the pair $(\MB,\MT)$.
Since the algebra $\MB$ determines the quasivariety~$\CA$ and
the alter ego~$\MT$ determines both $\CZ$ and $\IScP\MT$, we usually localise
to the pair $(\MB,\MT)$ and say simply that $\MT$~fully dualises~$\MB$.
(Note that, as for dualisability, the full dualisability of $\CA$ is independent
of the choice of the cogenerator~$\MB$~\cite{DHP:indFD}.)

\subsubsection*{Partial operations cannot be avoided}

A fact worth noting is that the signature of~$\CZ$ often must contain operations,
or even partial operations, if $\CA$ is to be fully dualised within $\CZ$ via
a pair $(\MB,\MT)$. For example, every endomorphism of $\MB$ must be represented as
a term function of~$\MT$ (Davey, Haviar and Willard~\cite[Prop.~4.3(b)]{DHW:?}).
The proof is easy, so we sketch it here:
Let~$\X$ be the smallest induced substructure of $D(\MB)$ containing~$\id_M$.
Then $\X \in \IScP\MT$ and the underlying set of~$\X$ consists of all total unary term
functions of~$\MT$. It can be shown that $u\colon E(\X) \to \MB$ defined by
$u(h)=h(\id_M)$ is a bijective morphism in~$\CA$, so is an isomorphism.
Assuming $\CA$ is fully dualised within~$\CZ$ via $(\MB,\MT)$,
we have $\X \in \Fix\varepsilon$ and so $\X \cong DE(\X) \cong D(\MB)$.
But $\X$ is an induced substructure of~$D(\MB)$, so by finiteness, $\X=D(\MB)$.
Since the underlying set of $D(\MB)$ is the set of endomorphisms of~$\MB$,
the result follows.

More generally, there exist quasivariety--cogenerator pairs $(\CA,\MB)$
that are fully dualisable, but only via $(\CZ,\MT)$ whose signature contains
partial operations~\cite{DHW:?}. So we allow the signature of~$\CZ$ to include partial operations.

\subsubsection*{Structural embeddings}

The decision to allow the signature of~$\CZ$ to include partial operations comes
at a price: the internal notion of `induced structural subobject',
as well as the corresponding notion of `structural embedding', becomes fragile.
In the absence of partial operations, structural embeddings in~$\CZ$ are
injective morphisms that reflect the relations in the signature;
they are characterised categorically in~$\CZ$ as concrete embeddings
(monomorphisms that are initial with respect to the un\-derlying-set functor),
and also as regular monomorphisms (equalisers of morphism pairs).
However, once partial operations are allowed in the signature of~$\CZ$,
the two categorical notions split; concrete embeddings in the new setting are
injective morphisms that reflect relations and graphs of partial operations,
while regular monomorphisms must also reflect the domains of partial operations.
Since each inclusion $D(\A)\hookrightarrow \MT^A$ is an embedding in the
latter (stronger) sense, we adopt the latter sense as the `correct' notion
of structural embedding, and take $\IScP\MT$ to mean the full subcategory
of~$\CZ$ consisting of the objects that structurally embed (in the
stronger sense) into a non-zero power of~$\MT$.

\subsection*{Problems of interest}

With our setting described, we turn to the problems that interest us.
Primarily, we would like to know which quasivarieties $\CA$ with a finite
cogenerator $\MB$ are fully dualisable in our sense, and why.
We do not address this problem here.  Nevertheless, our aim here is closely related:
\begin{itemize}
\item
For fixed $\CA$ and $\MB$ with $\CA$ fully dualisable,
we seek to understand \emph{all} $\CZ$ and $\MT$ for which $\CA$
is fully dualised within $\CZ$ via $(\MB,\MT)$.
\end{itemize}
To aid the discussion that follows, let $H_\Omega$ and $R_\Omega$ denote,
respectively, the sets of all finitary partial operations and
all finitary relations on the underlying set~$M$ of~$\MB$ that
`commute' with the structure of~$\MB$, and define $\Omega := H_\Omega \cup R_\Omega$.
Each subset $\Delta$ of~$\Omega$ may be interpreted as a signature.
Let $\CZ_\Delta$ denote the category of all Boolean spaces enriched with
continuous/closed structure of signature~$\Delta$, and let $\MT_\Delta$
denote the obvious object in~$\CZ_\Delta$ with underlying set~$M$.
Every alter ego of~$\MB$ is of the form $\MT_\Delta$, for some $\Delta\subseteq\Omega$.
Thus our question becomes:
\begin{itemize}
\item
Given that $\CA$ is fully dualisable, for which signatures $\Delta\subseteq \Omega$ is
$\CA$ fully dualised within $\CZ_\Delta$ via $(\MB,\MT_\Delta)$?
\end{itemize}
Localising to the cogenerators, this question becomes:
\begin{itemize}
\item
Given that some alter ego fully dualises~$\MB$, for which signatures $\Delta\subseteq \Omega$
does $\MT_\Delta$ fully dualise~$\MB$?
\end{itemize}
Here we address the practical problem of recognising those
$\Delta\subseteq \Omega$ for which $\MT_\Delta$ fully dualises~$\MB$.
To explain our approach, we first describe three helpful tools.

\subsubsection*{Tool 1}

The first tool is a syntactic quasi-ordering of signatures
$\Delta\subseteq \Omega$, with $\Omega$ at the top, whose corresponding
equivalence relation~$\equiv$ captures a useful notion of `syntactic equivalence'.
(See Definitions~\ref{def:cloep}--\ref{def:sred} and Lemma~\ref{lem:se-functor}.)
It is known that:
\begin{enumerate}
\item[(a)]
the `fully dualises $\MB$' relation on alter egos is invariant under $\equiv$, and
\item[(b)]
if some $\MT_\Delta$ fully dualises $\MB$, then $\MT_\Omega$ does as well~\cite[5.3]{DPW:galois}.
\end{enumerate}
A~naive expectation is that, more generally, if $\MT_\Delta$ fully dualises~$\MB$
and $\Delta\subseteq \Delta'$, then $\MT_{\Delta'}$ (being at least as rich
as~$\MT_\Delta$) should also fully dualise~$\MB$.

\subsubsection*{Tool 2}

The second tool is `reduction to the finite level'. Once again, fix $\CZ$ and~$\MT$.
Let~$\CA_{\mathrm{fin}}$ and $\CZ_{\mathrm{fin}}$ denote the subcategories of~$\CA$ and~$\CZ$,
respectively, consisting of the objects whose underlying sets are finite.
We say that $\CA$ is fully dualised within $\CZ$ via $(\MB,\MT)$ \emph{at the finite level}
if $\CA_{\mathrm{fin}} \subseteq \Fix\eta$ and
$\IScP\MT\cap \CZ_{\mathrm{fin}} \subseteq \Fix\varepsilon$.
Localising to $\MB$ and~$\MT$, we say that $\MT$ fully dualises $\MB$ at the finite level.
Informally, this means that the dual adjunction satisfies the conditions of
being a full duality at the level of finite objects.

A reasonable intuition is that,
while the full dualisability of a quasivariety~$\CA$ is determined at the infinite level,
if $\CA$ \emph{is} fully dualisable, then the problem of determining which signatures
$\Delta\subseteq \Omega$ give rise to fully dualising alter egos should be determined
at the finite level. In particular, a second naive expectation is that, if some
alter ego fully dualises~$\MB$ and $\MT_\Delta$~fully dualises~$\MB$ at the finite level,
then $\MT_\Delta$ should fully dualise~$\MB$.

\medskip

It turns out that both naive expectations stated above are falsified by examples
(see \cite[5.1]{DPW:galois} and \cite[Thm~1]{DHW:?}),
which explains in part the delicateness of the problem.

\subsubsection*{Tool 3}

The third tool, which will help us to overcome these issues, is the logic of universal
Horn sentences in finitary signatures with partial operations. An easy observation
is that, if two alter egos $\MT_1$ and $\MT_2$ both fully dualise~$\MB$, then the
categories $\IScP{\MT_1}$ and $\IScP{\MT_2}$ are equivalent, as they are both
dually equivalent to~$\CA$.  We will show that, conversely, in certain situations
we can translate a full duality from a known fully dualising alter ego~$\MT_1$
to another alter ego~$\MT_2$ by defining a concrete isomorphism between the
categories $\IScP{\MT_1}$ and $\IScP{\MT_2}$. (In particular, we can define such an
isomorphism whenever both $\MT_1$ and $\MT_2$ fully dualise~$\MB$.)
The logic of universal Horn sentences is used both to articulate the assumptions
that make this work and to define the isomorphism.

We outline the relevant universal Horn logic in Sections~\ref{sec:fulldualities}
and~\ref{sec:logic}. For a more detailed introduction to universal Horn logic
as it applies to the axiomatisation of dual categories,
see Clark, Davey, Haviar, Pitkethly and Talukder~\cite{CDHPT:standard}.

\subsection*{Our results}

We assume that we are given an alter ego~$\MT_1$ that fully dualises a finite
algebra~$\MB$ at the finite level. Our main theorem (Theorem~\ref{thm:trans})
gives necessary and sufficient conditions, organised into three families,
under which an alter ego~$\MT_2$ also fully dualises~$\MB$ at the finite level.
These conditions are a fragment of those in the known characterisation
(Lemma~\ref{lem:fdual}), and depend on both $\MB$ and~$\MT_1$.
In particular, one family of conditions is constructed from a basis for the universal
Horn theory of~$\MT_1$.

\subsubsection*{Self-contained corollaries}

While the statement of the main theorem is rather technical, we use
the theorem to obtain a series of self-contained corollaries.
One corollary gives a new and very natural condition under which
every finite-level full duality lifts to the infinite level.
First, we say that an alter ego $\MT$ is \emph{standard} if the category
$\IScP{\MT}$ consists of all the structures in~$\CZ$ that satisfy the
universal Horn theory of~$\MT$; see Definition~\ref{def:std}.
Here is the corollary (Theorem~\ref{cor:std}):
\begin{itemize}
\item[]
{\itshape Assume that some standard alter ego fully dualises\/~$\MB$.
If an alter ego\/ $\MT$ fully dualises\/~$\MB$ at the finite level\up,
then\/ $\MT$ fully dualises\/~$\MB$ and is also standard.}
\end{itemize}
It follows from this corollary that, for any quasi-primal algebra~$\MB$,
every finite-level full duality based on~$\MB$ lifts to a full duality
(Example~\ref{ex:qp}).

Other corollaries include a new characterisation of the alter egos that
yield a finite-level full duality (Theorem~\ref{thm:fatfl_char})
and a new constructive description of the smallest alter ego that yields a
finite-level full duality (Theorem~\ref{cor:alpha}). We also obtain
the known characterisation (Davey, Pitkethly and Willard~\cite[5.3]{DPW:galois})
of how the structure on an alter ego can be enriched without destroying
a full duality (Theorem~\ref{thm:enrich}).

\subsubsection*{Seminal counterexamples explained}

We use our main theorem to elucidate two important
counter\-examples in the theory of natural dualities:
\begin{itemize}
\item
\emph{The first example of a finite-level full duality that is
not equivalent to~$\MT_\Omega$} (Davey, Haviar and Willard~\cite{DHW:?}).
This example is based on the three-element bounded lattice $\ThB$
and the alter ego $\ThT_h$ defined in Section~\ref{sec:three}.
\item
\emph{The first example of a full duality that is not equivalent
to~$\MT_\Omega$} (Clark, Davey and Willard~\cite{CDW:!}).
This example, which solved a 27-year-old problem from~\cite{DW:old},
is based on a four-element quasi-primal algebra $\QB$
and the alter ego $\QT_0$ defined in Example~\ref{ex:r1}.
\end{itemize}
We give a general algorithm (Algorithm~\ref{cor:alg}) that, given
(i)~an alter ego~$\MT_1$ that fully dualises~$\MB$ at the finite level
(typically, but not necessarily, equivalent to~$\MT_\Omega$), and
(ii)~a finite basis for the universal Horn theory of~$\MT_1$,
produces the smallest alter ego (up to~$\equiv$) that
fully dualises~$\MB$ at the finite level. This algorithm can be applied
to obtain the two examples listed above; see Example~\ref{ex:three}.

\subsubsection*{Different full dualities}

In the final section of the paper, we clarify what it means for two
full dualities based on the same finite algebra~$\MB$ to be `different'.
We show that the concept of `structural embedding' is not categorical
in the concrete dual category $\Fix\varepsilon = \IScP\MT$.
(This is in contrast to the fact that the concept \emph{is} categorical
in the larger category~$\CZ$.) More precisely, if $\MT_1$ and $\MT_2$
are two alter egos, both of which fully dualise~$\MB$, then the two
dual categories $\IScP{\MT_1}$ and $\IScP{\MT_2}$ are necessarily
isomorphic as concrete categories (Lemma~\ref{lem:transfer2}),
but the isomorphism cannot preserve and reflect structural embeddings
unless the signatures of~$\MT_1$ and~$\MT_2$ are equivalent
(Lemma~\ref{lem:se-functor}).

\section{Preliminaries: Full dualities}\label{sec:fulldualities}

In this section, we formalise many of the concepts discussed more informally in the introduction.
For a comprehensive introduction to the theory of natural dualities,
see the Clark--Davey text~\cite{CD:book}.

Fix a finite algebra~$\MB = \langle M; \Sigma\rangle$ and consider the quasivariety
$\CA := \ISP\MB$, that is, the class of all isomorphic copies of subalgebras of arbitrary
powers of~$\MB$. Our conventions are that $\CA$ never contains the empty algebra
and that $\CA$ always contains the one-element algebras (via the zero power);
for other consistent conventions, see~\cite{DPW:galois}.

\begin{itemize}
\item
Let $r$ be an $n$-ary relation on $M$, for some $n\ge 0$.
Then $r$ is said to be \emph{compatible with $\MB$} if it forms
a subalgebra $\rb$ of~$\MB^n$.

\item
Let $h$ be an $n$-ary partial operation on $M$, for some $n\ge 0$.
Then $h$ is said to be \emph{compatible with $\MB$} if
the $(n+1)$-ary relation
\[
\graph h:= \{\, (\vec a, h(\vec a))\mid \vec a \in \dom h\,\}
\]
is compatible with $\MB$, or equivalently, if the $n$-ary relation $r :=\dom h$
is compatible with $\MB$ and $h \colon \rb \to \MB$ is a homomorphism.
\end{itemize}

An \emph{alter ego} of $\MB$ is a topological structure $\MT = \langle M;  H, R, \T\rangle$
with the same underlying set as~$\MB$, where
\begin{itemize}
\item $H$ is a set of partial operations that are compatible with~$\MB$,

\item $R$ is a set of relations that are compatible with~$\MB$, and

\item $\T$ is the discrete topology on $M$.
\end{itemize}
It is common to add a set $G$ of total operations to the signature of $\MT$,
but to simplify the notation, we include total operations in~$H$.

An alter ego $\MT$ is the starting point for creating a potential
dual category~$\CX$ for the quasivariety $\CA = \ISP\MB$.
First, we form the category $\CZ$ whose objects are the
\emph{Boolean structures} of signature~$(H,R)$.
(That is, each member of~$\CZ$ is a topological structure with a Boolean topology and
with continuous partial operations on closed domains and with closed relations.)
The morphisms of~$\CZ$ are continuous structure-preserving maps.
The potential dual category~$\CX$ of~$\CA$ will be a full subcategory of~$\CZ$.

We require the usual concept of induced substructure and the concept of structural embedding:
\begin{itemize}
\item
For $\X,\Y \in \CZ$, we say that $\X$~is an \emph{induced substructure} of~$\Y$
if $X\subseteq Y$, the topology on~$\X$ is the induced subspace topology from~$\Y$,
the relations in~$R^\X$ are the restrictions of those in~$R^\Y$, and the
domains and graphs of the partial operations in~$H^\X$ are the restrictions
of those in~$H^\Y$.
\item
We define a morphism in $\CZ$ to be a \emph{structural embedding} if
\begin{enumerate}
\item[(a)]
it is a homeomorphism from its domain to its range considered
as an induced subspace of its codomain, and
\item[(b)]
it preserves and reflects the relations in the signature and
the domains and graphs of partial operations in the signature.
\end{enumerate}
\end{itemize}
The alter ego~$\MT$ of~$\MB$ induces a pair of contravariant hom-functors
$D \colon \CA \to \CZ$ and $E \colon \CZ \to \CA$, and a pair of natural
transformations $\eta \colon \id_\CA \to ED$ and
$\varepsilon \colon \id_\CZ \to DE$.
The hom-functors $D$ and $E$ are given on objects by
\begin{align*}
D(\A) &:= \text{the induced substructure of $\MT^A$ with underlying set $\rhom\CA(\A,\MB)$}\\
E(\Z) &:= \text{the subalgebra of $\MB^Z$ with underlying set $\rhom\CZ(\Z,\MT)$}
\end{align*}
for all $\A\in \CA$ and $\Z\in \CZ$. The compatibility between $\MB$ and $\MT$
guarantees that these hom-functors are well defined. The natural transformations $\eta$
and $\varepsilon$ are given by evaluation: for all $\A\in \CA$,
the homomorphism $\eta_\A \colon \A \to ED(\A)$ is defined by
\[
\eta_\A(a)(x) := x(a), \quad\text{for all } a\in A \text{ and }  x\in \rhom\CA(\A,\MB),
\]
and, for all $\Z\in \CZ$, the morphism $\varepsilon_\Z \colon \Z \to DE(\Z)$ is defined by
\[
\varepsilon_\Z(z)(u) := u(z), \quad\text{for all } z\in Z \text{ and } u\in \rhom\CZ(\Z,\MT).
\]
It is easily seen that $\eta_\A \colon \A \to ED(\A)$ is an embedding, for all $\A\in \CA$,
since $\CA = \ISP\MB$.

We form the \emph{topological quasivariety} $\CX := \IScP\MT$ consisting
of all isomorphic copies of topologically closed induced substructures
of non-zero powers of~$\MT$. Our conventions are that $\CX$~contains the
empty structure if and only if $H$~contains no nullary operations, and that
$\CX$~contains a one-element structure if and only if it appears as an
induced substructure of~$\MT$. It is easily seen that
$\varepsilon_\X \colon \X \to DE(\X)$ is a structural embedding, for all $\X\in \CX$.

The basic concepts, localised to the pair $(\MB,\MT)$, are defined as follows:
\begin{enumerate}
\item
\emph{$\MT$ dualises $\MB$} [\emph{at the finite level}]
if the embedding $\eta_\A \colon \A \to ED(\A)$ is an isomorphism,
for each [finite] algebra $\A \in \CA$.
\item
\emph{$\MT$ fully dualises $\MB$} [\emph{at the finite level}]
if, in addition to~(1), the structural embedding
$\varepsilon_\X \colon \X \to DE(\X)$ is an isomorphism,
for each [finite] structure $\X \in \CX$.
\item
\emph{$\MT$ strongly dualises $\MB$} [\emph{at the finite level}]
if, in addition to~(1) and~(2), the alter ego $\MT$ is injective
with respect to structural embeddings among the [finite] structures in~$\CX$.
\end{enumerate}
Using the $\Fix$ notation of the introduction:
\begin{itemize}
\item
If $\MT$ dualises $\MB$, then $\Fix\eta = \CA$, and hence
$\CA$~is dually equivalent to a full subcategory of~$\CX$.
\item
If $\MT$ fully dualises $\MB$, then $\Fix\eta = \CA$ and $\Fix\varepsilon = \CX$,
and hence $\CA$~is dually equivalent to~$\CX$.
\end{itemize}

The following basic lemma will allow us to create new full
dualities from old ones. There are two
versions of this lemma: the phrases in square brackets can
be either included or deleted.

\begin{lemma:transfer}\label{lem:transfer}
Let\/ $\MT_1$ and $\MT_2$ be alter egos of a finite
algebra\/~$\MB$. For $i\in \{1,2\}$\up, define $\CX_i := \IScP{\MT_i}$.
Assume that\/ $\MT_1$ fully dualises $\MB$
\up[at the finite level\up]. Then $\MT_2$ also fully dualises
$\MB$ \up[at the finite level\up] provided the following two
conditions hold\up:
\begin{enumerate}
 \item $\MT_2$ dualises $\MB$ \up[at the finite level\up]\up;

 \item for each \up[finite\up] structure $\X$ in $\CX_2$\up, there
     is a structure $\X'$ in $\CX_1$ on the same underlying set as
     $\X$ such that\/ $\rhom{\CX_2}(\X,\MT_2) = \rhom{\CX_1}(\X',\MT_1)$.
\end{enumerate}
\end{lemma:transfer}
\begin{proof}
Define $\CA := \ISP \MB$ and, for $i\in \{1, 2\}$, let
$D_i\colon \CA \to \CX_i$ and $E_i\colon \CX_i \to \CA$ be the
hom-functors induced by $\MB$ and $\MT_i$.
Assume that (1) and (2) hold. Let $\X$ be a [finite] structure
in~$\CX_2$. We just need to show that $\varepsilon_\X \colon \X
\to D_2E_2(\X)$ is surjective, that is, we need to show that
every homomorphism $u \colon E_2(\X) \to \MB$ is given by
evaluation.

By~(2), we have $\rhom{\CX_2}(\X,\MT_2) = \rhom{\CX_1}(\X',\MT_1)$.
Thus $E_2(\X) = E_1(\X')$ in~$\CA$. As $\MT_1$ fully
dualises $\MB$ [at the finite level], each homomorphism $u
\colon E_1(\X') \to \MB$ is given by evaluation. It follows
at once that each homomorphism $u \colon E_2(\X) \to
\MB$ is given by evaluation.
\end{proof}

We close this section with a brief discussion of universal Horn sentences
and their role in attempts to axiomatise dual categories.

Fix a signature $(H,R)$ of finitary partial-operation and relation
symbols. We define a \emph{universal Horn sentence} (\emph{uH-sentence},
for short) in the language of $(H,R)$ to be a first-order sentence of
the form
\[
\forall \vec v\, \Bigl[ \Bigl(\bigand_{i = 1}^\nu \alpha_i(\vec v)\Bigr) \to
\gamma(\vec v) \Bigr],
\]
for some $\nu \ge 0$, where each $\alpha_i(\vec v)$ is an
atomic formula and $\gamma(\vec v)$ is either an atomic formula
or~$\bot$. (Note that, if $\nu = 0$, then we have a sentence of the form
$\forall \vec v\, \gamma(\vec v)$.)

\begin{definition}[{\cite{CDHPT:standard}}]\label{def:std}
Let $\MT = \langle M; H, R, \T\rangle$ be an alter ego of a finite algebra~$\MB$,
and let $\CZ$ be the associated category of Boolean structures of signature $(H,R)$.
The potential dual category $\CX = \IScP\MT$ is always contained in the
category~$\CY$ of all Boolean models of the uH-theory of~$\MT$. That is,
\[
 \CX \subseteq \CY := \bigl\{\, \Y \in \CZ \bigm| \Y \models \ThuH {\MT} \,\bigr\},
\]
where $\ThuH {\MT}$ denotes the set of all uH-sentences true in~$\MT$.

If the two categories $\CX$ and $\CY$ are equal, then we say that the
alter ego~$\MT$ is \emph{standard}. For example, the discrete semilattice
$\ST = \langle \{0,1\}; \vee, \T \rangle$ (from Hofmann--Mislove--Stralka duality)
is standard~\cite{HMS:semi}, but the discrete chain $\TwT = \langle \{0,1\}; \le, \T \rangle$
(from Priestley duality) is not standard~\cite{Stralka}.

Note that we always have $\CX_{\mathrm{fin}} = \CY_{\mathrm{fin}}$.
That is, the finite structures in~$\CX$ are precisely the finite Boolean
models of the uH-theory of~$\MT$; see \cite[pp.\ 861--862]{CDHPT:standard}.
\end{definition}

\section{Preliminaries: Comparing alter egos}\label{sec:alteregos}

This section gives the more specific background theory that we require.
We start by defining the `structural reduct' quasi-order on the alter
egos of a finite algebra~$\MB$; see~\cite{DHW:struct,DPW:galois}.
This is the natural generalisation from algebras to structures of the
`term reduct' quasi-order.

\begin{definition}[{\cite{CD:book}}]\label{def:cloep}
Given any alter ego~$\MT = \langle M; H, R, \T \rangle$ of~$\MB$,
we define $\clo \MT$ to be the \emph{enriched partial clone} on~$M$ generated
by~$H$, that is, the smallest set of non-empty partial operations on $M$ that contains~$H$
and the projections, $\pi_i \colon M^n \to M$ for all $n\ge 1$ and~$i\le n$,
and is closed under composition (when the composite has non-empty domain). This corresponds
to the usual definition of partial clone, except that we exclude empty domains and
we enrich the partial clone by allowing nullary operations.
\end{definition}

\begin{definition}[\cite{DPW:galois}]\label{def:relca}
Let~$\MT = \langle M; H, R, \T \rangle$ be an alter ego of~$\MB$ and let $k,n\ge 0$.
\begin{itemize}
\item
We shall call a conjunction of atomic formul\ae\
$\Psi(\vec v) = [\psi_1(\vec v) \And \dotsb \And \psi_k(\vec v)]$
a \emph{conjunct-atomic formula}.

\item
We say that a non-empty $n$-ary relation $r$
on $M$ is \emph{conjunct-atomic definable} from $\MT$ if it is
described in $\MT$ by an $n$-variable conjunct-atomic formula
$\Psi(\vec v)$ in the language of~$\MT$, that is, if
\[
r = \bigl\{\, (a_1,\dotsc,a_n) \in M^n \bigm| \Psi(a_1,\dotsc,a_n)
\text{ is true in $\MT$} \,\bigr\}.
\]

\item
We define $\cadef \MT$ to be the set of all relations on $M$ that are
conjunct-atomic definable from~$\MT$.
\end{itemize}
\end{definition}

\begin{definition}[{\cite{DHW:struct}}]\label{def:sred}
Let $\MT_1 = \langle M; H_1, R_1, \T \rangle$ and $\MT_2 = \langle M;
H_2, R_2, \T \rangle$ be alter egos of~$\MB$. Then we say that
$\MT_1$ is a \emph{structural reduct} of~$\MT_2$ if
 \begin{enumerate}
 \item[(a)] each partial operation in $H_1$ has an extension in $\clo {\MT_2}$, and
 \item[(b)] each relation in $R_1 \cup \dom{H_1}$ belongs to $\cadef {\MT_2}$.
\end{enumerate}
We say that $\MT_1$ and $\MT_2$ are \emph{structurally
equivalent} if each is a structural reduct of the other.
\end{definition}

Under the `structural reduct' quasi-order, the alter egos of~$\MB$ form a
doubly algebraic lattice~$\mathcal A_\MB$; see~\cite[2.6]{DPW:galois}.
The top element of this lattice is represented by the \emph{top alter ego}
of~$\MB$, which we denote by $\MT_\Omega= \langle M; H_\Omega, R_\Omega, \T \rangle$,
where
\begin{itemize}
\item $H_\Omega$ is the set of all partial operations that are compatible with~$\MB$, and
\item $R_\Omega$ is the set of all relations that are compatible with~$\MB$.
\end{itemize}
With the help of the following definitions and lemmas, we will be able
to describe how the various flavours of duality occur within the lattice~$\mathcal A_\MB$;
see Facts~\ref{fac:galois}.

\begin{definition}
Let $r$ be an $n$-ary relation compatible with~$\MB$, for some $n \ge 0$,
and let $\rb$ be the subalgebra of~$\MB^n$ with $r$ as underlying set.
\begin{itemize}
 \item We say that the relation $r$ is \emph{hom-minimal}
     if every homomorphism from $\rb$ to $\MB$ is a
     projection (see~\cite{DHW:struct}).
 \item We say that $\MT$ is \emph{operationally rich} at
     $r$ if every compatible partial operation on $\MB$ with
     domain $r$ has an extension in $\clo \MT$.
\end{itemize}
\end{definition}

\begin{lemma:dual}[{\cite[4.1]{DPW:galois}}]\label{lem:dual}
Let\/ $\MT$ be an alter ego of a finite algebra~$\MB$. Then $\MT$
dualises $\MB$ at the finite level if and only if
every hom-minimal relation on $\MB$ belongs to $\cadef\MT$.
\end{lemma:dual}

\begin{remark}\label{rem:lift}
If a finite algebra $\MB$ has an alter ego that yields a duality,
then every finite-level duality based on~$\MB$ lifts to the infinite level;
see~\cite[p.~19]{DPW:galois}. Note that the same is not true in general
for full duality~\cite{DHW:?}; see Lemma~\ref{lem:thth} and Remark~\ref{rem:thth}.
\end{remark}

We shall use the description of finite-level full duality
provided by the following lemma. In fact, our main theorem will
allow us to give a more refined version of this lemma
(see Theorem~\ref{thm:fatfl_char}).

\begin{lemma:fdual}[{\cite[4.3]{DPW:galois}}]\label{lem:fdual}
Let\/ $\MT$ be an alter ego of a finite algebra~$\MB$. Then
$\MT$ fully dualises $\MB$ at the finite level if and only if
\begin{enumerate}
 \item[(a)] every hom-minimal relation on $\MB$ belongs to $\cadef\MT$\up, and
 \item[(b)] $\MT$ is operationally rich at each relation in $\cadef \MT$.
\end{enumerate}
\end{lemma:fdual}

\begin{remark}
Since $M^n\in \cadef\MT$, for all $n\ge 0$, it follows from~(b) above that every
compatible total operation (that is, every homomorphism $g\colon \MB^n \to \MB$)
belongs to $\clo \MT$. In particular, every element of $M$ that forms a one-element
subalgebra of $\MB$ must be the value of a nullary operation in $\clo \MT$.
\end{remark}

\begin{facts}\label{fac:galois}
The following facts about the lattice of alter egos $\mathcal A_\MB$ are proved
in~\cite{DPW:galois}; see Figure~\ref{fig:AM}.
\begin{enumerate}
\item By the Duality Lemma~\ref{lem:dual},
the alter egos that dualise $\MB$ at the finite level
form a principal filter of~$\mathcal A_\MB$.

\item It follows from the Full Duality Lemma~\ref{lem:fdual} that,
under the `structural reduct' quasi-order, there is a smallest
alter ego~$\MT_\alpha$ that fully dualises $\MB$ at the finite level;
see~\cite[4.4]{DPW:galois}.

\item The alter egos that fully dualise $\MB$ at the finite level form a complete
sublattice $\mathcal F_\MB$ of $\mathcal A_\MB$ and those that fully dualise $\MB$
form an up-set of~$\mathcal F_\MB$; see~\cite[5.5]{DPW:galois}.

\item An alter ego strongly dualises $\MB$ at the finite level if and only if it is
structurally equivalent to the top alter ego $\MT_\Omega$, and so
there is essentially only one candidate for a strong duality; see~\cite[4.6]{DPW:galois}.
\end{enumerate}
\end{facts}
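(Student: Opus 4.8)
The plan is to derive all four statements from the Duality Lemma~\ref{lem:dual} and the Full Duality Lemma~\ref{lem:fdual}, using that each of the four notions is an invariant of the structural-equivalence class of the alter ego: for the finite-level notions this is visible in those two lemmas, since they refer only to $\cadef\MT$ and $\clo\MT$, and for the infinite-level notions it follows because structurally equivalent alter egos induce the same hom-functors and natural transformations up to relabelling of structure. Two particular alter egos anchor the argument. Let $\MT_\partial := \langle M; \emptyset, R_{\mathrm{hm}}, \T\rangle$, where $R_{\mathrm{hm}}$ is the set of all hom-minimal relations on~$\MB$, and let $\MT_\Omega$ be the greatest alter ego, carrying every compatible finitary relation and partial operation; recall that $\mathcal A_\MB$ is a complete lattice with top~$\MT_\Omega$.

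For part~(1): by the Duality Lemma, $\MT$ dualises $\MB$ at the finite level exactly when $R_{\mathrm{hm}}\subseteq\cadef\MT$, which is precisely the assertion that $\MT_\partial$ is a structural reduct of~$\MT$. Hence the alter egos dualising $\MB$ at the finite level form the principal filter ${\uparrow}\MT_\partial$ of~$\mathcal A_\MB$, which contains $\MT_\partial$. For part~(2) we recast clause~(b) of the Full Duality Lemma as a fixed-point condition. Define $\Phi\colon\mathcal A_\MB\to\mathcal A_\MB$ so that $\Phi(\MT)$ adjoins to $\MT$ every compatible partial operation whose domain lies in $\cadef\MT$, and write $\operatorname{Fix}(\Phi)$ for the set of $\MT$ with $\Phi(\MT)$ structurally equivalent to~$\MT$. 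One checks that $\Phi$ is well defined, monotone and inflationary for the structural-reduct order, and that $\MT\in\operatorname{Fix}(\Phi)$ if and only if $\MT$ is operationally rich at each relation in $\cadef\MT$ (here one only needs that the domains of the adjoined operations lie in $\cadef\MT$); together with part~(1) this gives $\mathcal F_\MB = ({\uparrow}\MT_\partial)\cap\operatorname{Fix}(\Phi)$. Since $\MT_\Omega\in\operatorname{Fix}(\Phi)$ with $\MT_\Omega\ge\MT_\partial$, and since any meet of fixed points of a monotone inflationary self-map of a complete lattice is again a fixed point, the meet $\MT_\alpha$ of all members of $\operatorname{Fix}(\Phi)$ above $\MT_\partial$ is itself a fixed point above $\MT_\partial$; so $\MT_\alpha$ lies in $\mathcal F_\MB$ and is its least element.

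For part~(3): the identity $\mathcal F_\MB = ({\uparrow}\MT_\partial)\cap\operatorname{Fix}(\Phi)$ makes it immediate that $\mathcal F_\MB$ is closed under arbitrary meets in $\mathcal A_\MB$, since both factors are. Closure under arbitrary joins is the substantive point, and it does not follow formally from $\Phi$ being monotone and inflationary; here one needs the finer structure of the doubly algebraic lattice $\mathcal A_\MB$ — in particular a description of how the clone $\clo{\bigvee_i\MT_i}$ and the relations $\cadef{\bigvee_i\MT_i}$ are generated from the $\clo{\MT_i}$ and the $\cadef{\MT_i}$ — in order to see that operational richness is inherited by joins of members of $\mathcal F_\MB$. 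I expect this to be the main obstacle; the details are in~\cite{DPW:galois}. Once $\mathcal F_\MB$ is known to be a complete sublattice, with least element $\MT_\alpha$ and greatest element $\MT_\Omega$, that the fully dualising alter egos form an up-set of $\mathcal F_\MB$ follows from the New-from-old Lemma~\ref{lem:transfer}: given $\MT_1\le\MT_2$ in $\mathcal F_\MB$ with $\MT_1$ fully dualising $\MB$, set $\CX_i:=\IScP{\MT_i}$; condition~(1) of that lemma holds because $\MT_2$ dualises $\MB$ at the finite level and hence, by Remark~\ref{rem:lift}, at the infinite level, while condition~(2) — producing, for each $\X\in\CX_2$, a companion $\X'\in\CX_1$ on the same underlying set with $\CX_2(\X,\MT_2)=\CX_1(\X',\MT_1)$ — is exactly where membership of both $\MT_1$ and $\MT_2$ in $\mathcal F_\MB$ is used.

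For part~(4): in the backward direction one invokes the classical fact that the greatest alter ego $\MT_\Omega$ strongly dualises $\MB$ at the finite level — it dualises at the finite level and is operationally rich at every relation, so by the two lemmas it fully dualises at the finite level, and it is injective among the finite members of $\IScP{\MT_\Omega}$ — and then uses that all three flavours of duality are structural-equivalence invariants, so every alter ego structurally equivalent to $\MT_\Omega$ also strongly dualises $\MB$ at the finite level. In the forward direction, if $\MT$ strongly dualises $\MB$ at the finite level then $\MT\in\mathcal F_\MB$, so $\MT_\alpha\le\MT\le\MT_\Omega$; and if $\MT$ is not structurally equivalent to $\MT_\Omega$ then $\MT$ fails to carry — up to extension in $\clo\MT$, respectively up to conjunct-atomic definability from~$\MT$ — some compatible partial operation or some compatible relation, and from such a witness one constructs a finite structure $\X\in\IScP\MT$, an embedding of $\X$ into a finite structure $\Y\in\IScP\MT$, and a homomorphism $\X\to\MT$ that does not extend to $\Y$, contradicting the injectivity clause; the construction of this witness is the delicate part of the argument. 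Hence $\MT$ is structurally equivalent to $\MT_\Omega$. Finally, every alter ego that strongly dualises $\MB$ does so at the finite level, so the only candidates for a strong duality are those structurally equivalent to $\MT_\Omega$.
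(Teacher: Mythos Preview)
The paper does not prove these facts; it states them as results of~\cite{DPW:galois} with precise pointers and moves on. So there is no proof here to compare against, only the statement and its citations.

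Your arguments for parts~(1) and~(2) are correct and go beyond what the paper offers. The principal-filter description in~(1) is immediate from the Duality Lemma, and your fixed-point argument for~(2) is sound: once one checks from the definition of structural reduct that $\Phi(\MT)\equiv\MT$ is equivalent to $\MT$ being operationally rich at every relation in $\cadef\MT$, the standard fact that the meet of fixed points of a monotone inflationary self-map of a complete lattice is again a fixed point produces the least element~$\MT_\alpha$.

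For parts~(3) and~(4) you explicitly defer the substantive steps (join-closure of~$\mathcal F_\MB$, the injectivity witness) to~\cite{DPW:galois}, which matches the paper. There is, however, a genuine gap in your up-set argument for~(3). You invoke the New-from-old Lemma~\ref{lem:transfer} and assert that its condition~(2) ``is exactly where membership of both $\MT_1$ and $\MT_2$ in $\mathcal F_\MB$ is used'', but you never construct the companion~$\X'$ or verify $\CX_2(\X,\MT_2)=\CX_1(\X',\MT_1)$. Taking $\X'$ to be the $\MT_1$-reduct of~$\X$ handles one inclusion, but the other---that every $\MT_1$-morphism $\X'\to\MT_1$ automatically preserves the additional $\MT_2$-structure on~$\X$---does not follow from membership in~$\mathcal F_\MB$ alone; establishing it is precisely what the transfer-functor machinery of Sections~\ref{sec:sharp}--\ref{sec:trans} is built to do. If you want a self-contained route within the paper's framework, observe that $\MT_2\in\mathcal F_\MB$ gives operational richness at every relation in $R_2\cup\dom{H_2}$ via the Full Duality Lemma, and then apply Theorem~\ref{thm:enrich} (which is~\cite[5.3]{DPW:galois}) directly.
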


\begin{figure}[t]
\begin{tikzpicture}
  \begin{scope}
    \clip (0,1.5) circle [x radius=1.25cm, y radius=1.5cm];
    \clip (-4,4) -- (0,2) -- (4,4) -- (-4,4);
    \fill[black!20] (-4,0) rectangle (4,4);
  \end{scope}
  \node[label,anchor=base] at (-1.5,0) {$\mathcal A_\MB$};
  \draw[name path=ellipse] (0,1.5) circle [x radius=1.25cm, y radius=1.5cm];
  \node[unshaded] (b) at (0,0) {}; 
  \node[unshaded] (d) at (0,1) {}; 
  \node[shaded] (f) at (0,2) {}; 
  \node[label] at (f) [below left=0pt and 1pt] {$\MT_\alpha$};
  \node[shaded] (s) at (0,3) {}; 
  \node[label] at (s) [above left=5pt and 1pt] {$\MT_\Omega$};
  \node[label,anchor=north west] (dl) at (2,1.5) {\parbox{4cm}{Filter of all\\ finite-level dualities}};
  \node[label,anchor=north west] (fl) at (2,2.75) {\parbox{4cm}{Filter includes all\\ finite-level full dualities}};
  \node[label,anchor=north west] (sl) at (2,3.5) {\parbox{4cm}{Finite-level strong duality}};
  \draw[straight,thin] (dl.west) to ($(d)+(0,0.5)$);
  \draw[straight,thin] (fl.west) to ($(f)+(0,0.5)$);
  \draw[straight,thin] (sl.west) to (s);
  \path [name path=line1] (d) -- ++(-2,1);
  \path [name path=line2] (d) -- ++(2,1);
  \draw [densely dashed,name intersections={of=ellipse and line1}] (d) -- (intersection-1);
  \draw [densely dashed,name intersections={of=ellipse and line2}] (d) -- (intersection-1);
  \path [name path=line3] (f) -- ++(-2,1);
  \path [name path=line4] (f) -- ++(2,1);
  \draw [densely dashed,name intersections={of=ellipse and line3}] (f) -- (intersection-1);
  \draw [densely dashed,name intersections={of=ellipse and line4}] (f) -- (intersection-1);
\end{tikzpicture}
\caption{The lattice of alter egos of a finite algebra $\MB$}\label{fig:AM}
\end{figure}
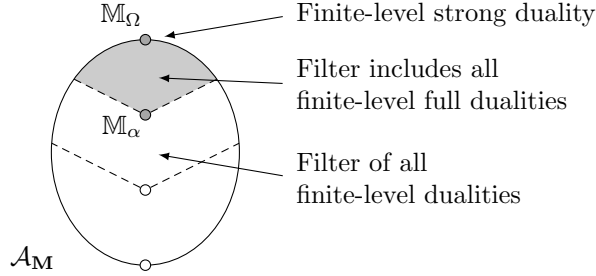

\section{Motivating example}\label{sec:three}

In this section, we illustrate the general idea behind the
proof of our New-from-old Theorem~\ref{thm:trans} using the
three-element bounded lattice
\[
\ThB = \langle \{0,a,1\}; \vee, \wedge, 0, 1\rangle,
\]
which has played a seminal role as an example in the theory of
natural dualities.

We use the four compatible partial operations on $\ThB$ shown in
Figure~\ref{fig:fgh}: the two unary operations $f$ and $g$, and the two binary
partial operations $\sigma$ and $h$. The alter ego $\ThT := \langle \{0,a,1\};
f,g,\T\rangle$ dualises~$\ThB$, and can be obtained from
Priestley duality using general `duality transfer'
techniques (Davey~\cite{D:siena}). The alter ego
\[
\ThT_\sigma := \langle \{0,a,1\}; f,g,\sigma,\T\rangle
\]
strongly dualises~$\ThB$, and can be obtained from Priestley
duality using general `strong duality transfer'
techniques (Davey and Haviar~\cite{DH:aids}). Note that, since $\ThT_\sigma$
strongly dualises $\ThB$ at the finite level, it must be
equivalent to the top alter ego of~$\ThB$;
see Facts~\ref{fac:galois}(4).

\begin{figure}[t]
\begin{tikzpicture}
   \begin{scope}
      \node[anchor=base] at (-0.875,0.5) {$f$};
      \node[unshaded] (0) at (0,0) {}; \node[label] at (0) [right=5pt] {$0$};
      \node[unshaded] (a) at (0,1) {}; \node[label] at (a) [right=5pt] {$a$};
      \node[unshaded] (1) at (0,2) {}; \node[label] at (1) [right=5pt] {$1$};
      \draw[order] (0) to (a); \draw[order] (a) to (1);
      \draw[loopy] (0) to [out=-155, in=-195] (0);
      \draw[curvy] (a) to [bend right] (0);
      \draw[loopy] (1) to [out=-155, in=-195] (1);
   \end{scope}
   \begin{scope}[xshift=3cm]
      \node[anchor=base] at (-0.875,0.5) {$g$};
      \node[unshaded] (0) at (0,0) {}; \node[label] at (0) [right=5pt] {$0$};
      \node[unshaded] (a) at (0,1) {}; \node[label] at (a) [right=5pt] {$a$};
      \node[unshaded] (1) at (0,2) {}; \node[label] at (1) [right=5pt] {$1$};
      \draw[order] (0) to (a); \draw[order] (a) to (1);
      \draw[loopy] (0) to [out=-155, in=-195] (0);
      \draw[curvy] (a) to [bend left] (1);
      \draw[loopy] (1) to [out=-155, in=-195] (1);
   \end{scope}
   \begin{scope}[xshift=6cm]
      \node[anchor=base] at (-1.5,0.5) {$\sigma$};
      \node[unshaded] (00) at (0,0) {}; \node[label] at (00) [left=5pt] {$(0,0)$};
      \node[unshaded] (01) at (0,1) {}; \node[label] at (01) [left=5pt] {$(0,1)$};
      \node[unshaded] (11) at (0,2) {}; \node[label] at (11) [left=5pt] {$(1,1)$};
      \node[unshaded] (0) at (1.125,0) {}; \node[label] at (0) [right=5pt] {$0$};
      \node[unshaded] (a) at (1.125,1) {}; \node[label] at (a) [right=5pt] {$a$};
      \node[unshaded] (1) at (1.125,2) {}; \node[label] at (1) [right=5pt] {$1$};
      \draw[order] (00) to (01); \draw[order] (01) to (11);
      \draw[order] (0) to (a); \draw[order] (a) to (1);
      \draw[straight] (00) to (0);
      \draw[straight] (01) to (a);
      \draw[straight] (11) to (1);
   \end{scope}
   \begin{scope}[xshift=10cm]
      \node[anchor=base] at (-1.5,0.5) {$h$};
      \node[unshaded] (00) at (0,0) {}; \node[label] at (00) [left=5pt] {$(0,0)$};
      \node[unshaded] (0a) at (0,0.667) {}; \node[label] at (0a) [left=5pt] {$(0,a)$};
      \node[unshaded] (a1) at (0,1.333) {}; \node[label] at (a1) [left=5pt] {$(a,1)$};
      \node[unshaded] (11) at (0,2) {}; \node[label] at (11) [left=5pt] {$(1,1)$};
      \node[unshaded] (0) at (1.125,0) {}; \node[label] at (0) [right=5pt] {$0$};
      \node[unshaded] (a) at (1.125,1) {}; \node[label] at (a) [right=5pt] {$a$};
      \node[unshaded] (1) at (1.125,2) {}; \node[label] at (1) [right=5pt] {$1$};
      \draw[order] (00) to (0a); \draw[order] (0a) to (a1); \draw[order] (a1) to (11);
      \draw[order] (0) to (a); \draw[order] (a) to (1);
      \draw[straight] (00) to (0);
      \draw[straight] (0a) to (a);
      \draw[straight] (a1) to (a);
      \draw[straight] (11) to (1);
   \end{scope}
\end{tikzpicture}
\caption{The compatible partial operations $f$, $g$, $\sigma$ and $h$ on $\ThB$}\label{fig:fgh}
\end{figure}
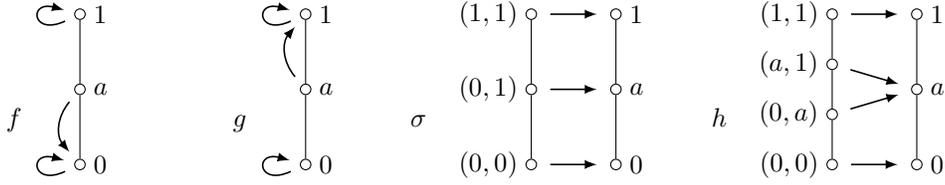

The first example of a finite-level full but not strong duality
(given by Davey, Haviar and Willard~\cite{DHW:?}) was based on
the alter ego
\[
\ThT_h := \langle \{0,a,1\}; f,g,h, \T\rangle.
\]
This alter ego was not found using general techniques.
Later in this paper, we shall give a general `full duality
transfer' technique that will allow us to obtain this alter ego
in a natural way from~$\ThT_\sigma$; see Example~\ref{ex:three}.

In this section, we give a new proof that $\ThT_h$ fully
dualises $\ThB$ at the finite level. We will show how to
transfer the finite-level full duality down from $\ThT_\sigma$
to $\ThT_h$ by using a basis for the universal Horn theory
of~$\ThT_\sigma$.

We want to apply the New-from-old Lemma~\ref{lem:transfer}. So
we need a way to enrich each finite structure $\X$ in $\CX_h :=
\IScP{\ThT_h}$ into a structure $\X^\sharp$ in $\CX_\sigma :=
\IScP{\ThT_\sigma}$. We will check membership of~$\CX_\sigma$
syntactically: we know that at the finite level $\CX_\sigma$ is
axiomatised by the universal Horn theory of~$\ThT_\sigma$.

\begin{definition}\label{def:sharp}
Let $\X = \langle X; f^\X,g^\X,h^\X,\T^\X\rangle$ be a finite
structure in~$\CX_h$. We want to define a structure $\X^\sharp$
of the same signature as~$\ThT_\sigma$. The binary partial operation
$\sigma$ is described in $\ThT_\sigma$ by the sentence
\[
\forall uvw \,\bigl[{\sigma(u,v) = w}\, \leftrightarrow \bigl(f(w) =
u \And g(w) = v\bigr)\bigr],
\]
which is logically equivalent to a conjunction of uH-sentences.
So we would like to define the partial operation $\psigma \X$
on $X$ by
\[
\graph {\psigma \X} := \bigl\{\, (x,y,z) \in X^3 \bigm| f^\X(z) =
x \And g^\X(z) = y \,\bigr\}.
\]
As the endomorphisms $f$ and $g$ separate the elements
of~$\ThB$, the uH-sentence
\[
\forall uv \,\bigl[\bigl( f(u) = f(v) \And g(u) = g(v)\bigr) \to
\,{u = v} \bigr]
\]
holds in~$\ThT_h$ and therefore in~$\X$. This tells us that
$\graph {\psigma \X}$ really is the graph of a binary partial operation
on $X$ (possibly an empty operation). So we can define
\[
\X^\sharp := \langle X; f^\X,g^\X,\psigma \X,\T^\X\rangle,
\]
and $\X^\sharp$ is a (discrete) Boolean structure of the same
signature as~$\ThT_\sigma$.
\end{definition}

\begin{remark}
The operation $\psigma \X$ defined above has a natural
interpretation in the case that $\X$ is a concrete structure
in~$\CX_h$. Assume that $\X \le (\ThT_h)^k$, for some $k
> 0$. Then we can impose the ternary relation
$\graph \sigma$ coordinate-wise on the set~$X$. The operation
$\psigma \X$ is defined so that $\graph {\psigma \X} = \graph
\sigma^X$. Thus $\psigma \X$ is the maximum coordinate-wise
extension of $\sigma$ to~$X$.
\end{remark}

\begin{lemma}\label{lem:sharp1}
Let\/ $\X$ be a finite structure in\/~$\CX_h := \IScP {\ThT_h}$.
Then the structure $\X^\sharp$ defined in~\ref{def:sharp}
belongs to $\CX_\sigma := \IScP {\ThT_\sigma}$.
\end{lemma}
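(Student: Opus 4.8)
The plan is to verify membership in $\CX_\sigma$ syntactically, using the fact (recorded in Definition~\ref{def:std}) that the finite members of $\CX_\sigma$ are exactly the finite Boolean structures satisfying $\ThuH{\ThT_\sigma}$. Since $\X^\sharp$ is already a finite (hence discrete) Boolean structure of the right signature, it suffices to show $\X^\sharp \models \ThuH{\ThT_\sigma}$. For this I would first fix a uH-basis for $\ThuH{\ThT_\sigma}$, i.e.\ a set $\Sigma$ of uH-sentences in the language of $\ThT_\sigma$ axiomatising $\ThuH{\ThT_\sigma}$ relative to the class of Boolean structures of that signature; such a basis exists because $\ThT_\sigma$ is standard (being structurally equivalent to the top alter ego — it strongly dualises $\ThB$ — and strong alter egos are standard; alternatively one can cite the known explicit basis for the Priestley-transfer alter ego $\ThT_\sigma$). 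I would then need to check that each $\phi \in \Sigma$ holds in $\X^\sharp$.

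The key idea is that $\X^\sharp$ differs from $\X$ only by the addition of the partial operation $\psigma{\X}$, whose graph is, by definition, $\{(x,y,z)\in X^3 \mid f^\X(z)=x \And g^\X(z)=y\}$ — and this is precisely the relation defined in $\X$ by the conjunct-atomic formula $f(w)=u \And g(w)=v$. So $\psigma{\X}$ is the interpretation in $\X$ of the term-definition of $\sigma$ given in Definition~\ref{def:sharp}, namely the biconditional $\sigma(u,v)=w \leftrightarrow (f(w)=u \And g(w)=v)$, which holds in $\ThT_\sigma$. Consequently, for any first-order sentence $\phi$ in the language of $\ThT_\sigma$, if we let $\phi^-$ denote the sentence in the language of $\ThT_h$ obtained by replacing every atomic subformula $\sigma(\vec t)=t'$ by $(f(t')=t_1 \And g(t')=t_2)$ (and noting $f,g,h$ are common to both signatures), then $\X^\sharp \models \phi$ iff $\X \models \phi^-$. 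Thus it is enough to show $\X \models \phi^-$ for each $\phi \in \Sigma$. Since $\phi$ holds in $\ThT_\sigma$ and the translation is sound there, $\phi^-$ holds in $\ThT_h$; and $\phi^-$ is a universal Horn sentence (the substitution turns atomic formulae into conjunct-atomic formulae, and a conjunction of conjunct-atomic premises with a conjunct-atomic-or-$\bot$ conclusion is equivalent to a conjunction of uH-sentences), so $\phi^-$, holding in $\ThT_h$, holds in every member of $\CX_h = \IScP{\ThT_h}$, in particular in $\X$.

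The main obstacle is the bookkeeping around the syntactic translation: one must be careful that $\sigma$ occurs in atomic formulae only in the form $\sigma(u,v)=w$ (a partial-operation symbol appears in an atom either applied inside a term or as $\sigma(u,v)=w$ after flattening, and $\sigma$ is binary so nesting is handled by introducing auxiliary variables), and that after the substitution the resulting sentence really is equivalent to a conjunction of uH-sentences rather than merely a universal first-order sentence — this uses exactly the equivalence invoked in Definition~\ref{def:sharp} for the defining sentence of $\sigma$ itself, applied uniformly. A cleaner route that sidesteps the general translation is to prove directly that $\X^\sharp$ is (isomorphic to) a closed substructure of a power of $\ThT_\sigma$: if $\X \le (\ThT_h)^k$ as a concrete structure, then the remark following Definition~\ref{def:sharp} identifies $\psigma{\X}$ with the coordinatewise relation $\graph{\sigma}^X$, so $\X^\sharp$ sits inside $(\ThT_\sigma)^k$ as a substructure, and it is topologically closed there because $X$ is closed in $(\ThT_h)^k = (\ThT_\sigma)^k$ (same underlying set, same topology) and adding coordinatewise relations/partial operations preserves closedness; the partial operation $\psigma{\X}$ is the restriction of $\sigma^{(\ThT_\sigma)^k}$ to the closed set $X^3 \cap \graph{\psigma{\X}}$. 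Since every finite structure in $\CX_h$ is isomorphic to such a concrete one, this gives $\X^\sharp \in \IScP{\ThT_\sigma} = \CX_\sigma$. I would present this concrete argument as the main proof and mention the syntactic viewpoint as the motivation, since the concrete argument avoids formalising the translation lemma.
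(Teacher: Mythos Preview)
Both approaches have genuine gaps, and a quick sanity check exposes them: neither argument uses the partial operation~$h$ at all, so if either worked it would equally show that $\langle \{0,a,1\}; f,g,\T\rangle$ fully dualises~$\ThB$ at the finite level, which is false.

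\textbf{The syntactic route.} Your claim that the translation $\phi^-$ is always equivalent to a conjunction of uH-sentences fails precisely when the conclusion of $\phi$ involves~$\sigma$. Consider axiom~(4) of the paper's basis,
\[
\forall uvw\,\bigl[\bigl(\sigma(u,v)=\sigma(u,v)\And\sigma(v,w)=\sigma(v,w)\bigr)\to\sigma(u,w)=\sigma(u,w)\bigr].
\]
The atomic formula $\sigma(u,w)=\sigma(u,w)$ asserts membership of $(u,w)$ in $\dom\sigma$; its translation is $\exists z\,(f(z)=u\And g(z)=w)$, an existential formula. Translating the premises introduces existentials that pull out as universals, but the existential in the \emph{conclusion} remains, so the translated sentence is not uH. The paper's proof handles exactly this axiom by using $h$ to supply a term witness: from $g(x)=f(y)$ one gets $h(x,y)$ defined with $f(h(x,y))=f(x)$ and $g(h(x,y))=g(y)$, which Skolemises the existential.

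\textbf{The concrete route.} Your claim that $\X^\sharp$ is a substructure of $(\ThT_\sigma)^k$ whenever $\X\le(\ThT_h)^k$ is false. Being a substructure requires $\dom{\psigma{\X}}=\dom{\sigma^{(\ThT_\sigma)^k}}\cap X^2$, i.e.\ that $X$ be closed under~$\sigma$. But $\sigma\notin\clo{\ThT_h}$ (otherwise $\ThT_h$ and $\ThT_\sigma$ would be structurally equivalent, contradicting that one is strong and the other is not). Concretely, take $X=\{0,1\}^2\subseteq\{0,a,1\}^2$: this is closed under $f,g,h$ (for $h$, the restricted domain is the diagonal), yet $\bigl((0,0),(1,1)\bigr)\in\dom{\sigma^{(\ThT_\sigma)^2}}\cap X^2$ while $\sigma\bigl((0,0),(1,1)\bigr)=(a,a)\notin X$. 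So $\X^\sharp$ has a strictly smaller $\sigma$-domain than the induced one and is \emph{not} a substructure of $(\ThT_\sigma)^2$. The lemma only asserts $\X^\sharp\in\IScP{\ThT_\sigma}$, not that it embeds in the \emph{same} power.

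A minor point: $\ThT_\sigma$ is \emph{not} standard (see Remark~\ref{rem:thth}); strong alter egos need not be. This is irrelevant here because you only need the finite-level fact $(\CX_\sigma)_{\mathrm{fin}}=(\CY_\sigma)_{\mathrm{fin}}$, which always holds, but your justification is incorrect.
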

\begin{proof}
As the structure $\X^\sharp$ is finite, we just have to check
it is a model of the universal Horn theory of~$\ThT_\sigma$.
The basis for $\ThuH {\ThT_\sigma}$ given by Clark, Davey,
Haviar, Pitkethly and Talukder~\cite[3.6]{CDHPT:standard} can be
reduced to the following set of sentences:
\begin{enumerate}
 \item $\forall v\,\bigl[f(v) = f(f(v)) = g(f(v)) \And g(v) =
     f(g(v)) = g(g(v))\bigr]$;
 \item $\forall uvw\,\bigl[\bigl(f(w) = u \And  g(w) =
     v\bigr) \leftrightarrow \sigma(u,v) = w\bigr]$;
 \item $\forall uv\,\bigl[\bigl(\sigma(u,v) = \sigma(u,v) \And
     \sigma(v,u) = \sigma(v,u) \bigr) \to u = v\bigr]$;
 \item $\forall uvw\,\bigl[\bigl(\sigma(u,v) = \sigma(u,v) \And
     \sigma(v,w) = \sigma(v,w) \bigr) \to \sigma(u,w) = \sigma(u,w)\bigr]$.
\end{enumerate}
Since sentence~(1) is in the language of $f$ and~$g$, it is
also part of the uH-theory of~$\ThT_h$. So $\X^\sharp$
satisfies~(1), as $\X \in \CX_h$. Sentence (2) holds in
$\X^\sharp$ by construction. Sentence (3) can be transformed
into a uH-sentence in the language of $f$ and~$g$, using
sentence~(2):
\[
\forall uvxy\,\bigl[\bigl(f(x) = u \And g(x) = v \And f(y) = v \And g(y) = u\bigr)
\to u = v\bigr].
\]
So $\X^\sharp$ satisfies~(3), again as $\X \in \CX_h$.

When sentence~(4) is translated into the language of $f$
and~$g$, it becomes
\begin{multline*}
 \phi := \forall uvwxy\,\bigl[
 \bigl(f(x) = u \And g(x) = v \And f(y) = v \And g(y) = w \bigr)
 \to \\
 \exists z \bigl( f(z) = u \And g(z) = w \bigr)
\bigr],
\end{multline*}
which is not a uH-sentence. But we can overcome this problem
using the partial operation~$h$. It is easy to check that $\ThT_h$
satisfies the sentences
\begin{enumerate}
 \item[(5)] $\forall xy\,\bigl[g(x) = f(y) \to h(x,y)=h(x,y)\bigr]$,
 \item[(6)] $\forall xy\,\bigl[h(x,y)=h(x,y) \to f(h(x,y)) = f(x)\bigr]$,
 \item[(7)] $\forall xy\,\bigl[h(x,y)=h(x,y) \to g(h(x,y)) = g(y)\bigr]$.
\end{enumerate}
It follows that $\ThT_h$ satisfies
\begin{multline*}
 \psi := \forall uvwxy\,\bigl[
 \bigl(f(x) = u \And g(x) = v \And f(y) = v \And g(y) = w \bigr) \to \\
 \bigl( f(h(x,y)) = u \And g(h(x,y)) = w \bigr) \bigr],
\end{multline*}
which is logically equivalent to a conjunction of uH-sentences.
Since $\psi \vdash \phi$, it follows that $\X \models \phi$ and
therefore that $\X^\sharp$ satisfies~(4).
\end{proof}

The original proof that $\ThT_h$ fully dualises $\ThB$ at the
finite level piggybacked on Priestley duality. We obtain a more
`generalisable' proof by piggybacking on the strong duality
given by~$\ThT_\sigma$.

\begin{lemma}[\cite{DHW:?}]\label{lem:thth}
The alter ego $\ThT_h := \langle \{0,a,1\}; f,g,h, \T \rangle$
fully dualises the bounded lattice $\ThB$ at the finite level.
\end{lemma}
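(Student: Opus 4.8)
The plan is to apply the New-from-old Lemma~\ref{lem:transfer} with $\MT_1 := \ThT_\sigma$ and $\MT_2 := \ThT_h$. Since $\ThT_\sigma$ strongly dualises $\ThB$, it fully dualises $\ThB$ at the finite level, so the hypothesis of the lemma is met; what remains is to verify the two numbered conditions for $\MT_2 = \ThT_h$.

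Condition~(1), that $\ThT_h$ dualises $\ThB$ at the finite level, I would dispatch immediately: $\ThT$ dualises $\ThB$, and $\ThT$ is plainly a structural reduct of $\ThT_h$ (delete $h$), so $\ThT_h$ lies in the filter of finite-level dualities by Facts~\ref{fac:galois}(1). For condition~(2), given a finite structure $\X$ in $\CX_h$ I would take $\X' := \X^\sharp$ from Definition~\ref{def:sharp}. By Lemma~\ref{lem:sharp1} the structure $\X^\sharp$ lies in $\CX_\sigma$, and it has the same underlying set $X$ as $\X$, so the whole of condition~(2) reduces to the equality of hom-sets
\[
\CX_h(\X,\ThT_h) = \CX_\sigma(\X^\sharp,\ThT_\sigma).
\]
Note that $\X$ and $\X^\sharp$ share $f^\X$ and $g^\X$, while $\X$ additionally carries $h^\X$ and $\X^\sharp$ additionally carries $\psigma\X$; as both topologies are discrete, continuity is never an issue.

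The inclusion $\CX_h(\X,\ThT_h)\subseteq\CX_\sigma(\X^\sharp,\ThT_\sigma)$ is the easy direction: any map that preserves $f^\X$ and $g^\X$ automatically preserves $\psigma\X$, because $\sigma$ is described in $\ThB$ by $f(w)=s\And g(w)=t$ and $\graph{\psigma\X}$ is the corresponding subset of $X^3$ — I would record this in a line. For the reverse inclusion I would take $u$ preserving $f^\X$, $g^\X$ and $\psigma\X$ and show it preserves $h^\X$. Since $\X\in\CX_h$ it models $\ThuH{\ThT_h}$, so I may use the uH-consequences
\[
\dom h=\{(s,t)\mid g(s)=f(t)\},\qquad f(h(s,t))=f(s),\qquad g(h(s,t))=g(t),
\]
which come (routinely) from sentences~(5)--(7) together with the converse of~(5), as well as the separation sentence $\forall uv\,[(f(u)=f(v)\And g(u)=g(v))\to u=v]$. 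Then for $(x,y)\in\dom{h^\X}$ with $z := h^\X(x,y)$ these give $g^\X(x)=f^\X(y)$, $f^\X(z)=f^\X(x)$ and $g^\X(z)=g^\X(y)$; applying $u$ and using preservation of $f^\X,g^\X$ yields $(u(x),u(y))\in\dom h$ together with $f(u(z))=f(u(x))$ and $g(u(z))=g(u(y))$, whence separation pins down $u(z)=h(u(x),u(y))$. Once condition~(2) is in hand, the New-from-old Lemma~\ref{lem:transfer} delivers the statement.

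The crux is the reverse inclusion: although the computation is short, it is precisely where the partial operation $h$ — rather than $\sigma$ — in the alter ego earns its keep, through the three clauses~(5)--(7) that express $h$ in terms of $f$ and $g$ inside a universal Horn theory. One subtlety I would keep an eye on is that $h^\X$ is the coordinatewise $h$, so its domain really is $\{(x,y)\mid g^\X(x)=f^\X(y)\}$ and not the larger set obtained by naively composing $\sigma$ with $f$ and $g$; the argument sketched above avoids the issue by never choosing a concrete embedding of $\X$, relying only on $\X\models\ThuH{\ThT_h}$.
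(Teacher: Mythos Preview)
Your proposal is correct and follows essentially the same approach as the paper: apply the New-from-old Lemma with $\MT_1=\ThT_\sigma$, $\MT_2=\ThT_h$, use Lemma~\ref{lem:sharp1} for $\X^\sharp\in\CX_\sigma$, and verify the hom-set equality by observing that any $(f,g)$-preserving map automatically preserves both $\psigma\X$ and $h^\X$ because their graphs are conjunct-atomic definable from $f$ and $g$. The only cosmetic difference is that the paper routes both inclusions through the common $(f,g)$-reduct $\X_\flat$, showing $\CX_h(\X,\ThT_h)=\CX(\X_\flat,\ThT)=\CX_\sigma(\X^\sharp,\ThT_\sigma)$ in one stroke via the single biconditional $h(u,v)=w \leftrightarrow \bigl(f(u)=f(w)\And g(v)=g(w)\And g(u)=f(v)\bigr)$, whereas you unpack this into the domain clause plus the two value clauses~(6)--(7) and invoke separation explicitly.
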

\begin{proof}
We use the fact that $\ThB$ is dualised by $\ThT := \langle
\{0,a,1\}; f,g, \T \rangle$ and strongly dualised
by~$\ThT_\sigma := \langle \{0,a,1\}; f,g, \sigma, \T \rangle$.
We shall establish conditions~(1) and~(2) of the New-from-old
Lemma~\ref{lem:transfer}, with $\MT_1 = \ThT_\sigma$ and $\MT_2
= \ThT_h$.

Since $\ThT$ dualises~$\ThB$, so does~$\ThT_h$. Now let $\X$ be
a finite structure in~$\CX_h$, and construct the structure
$\X^\sharp$ as in Definition~\ref{def:sharp}. We know that
$\X^\sharp \in \CX_\sigma$, by the previous lemma. It remains
to check that $\rhom{\CX_h}(\X,\ThT_h) =
\rhom{\CX_\sigma}(\X^\sharp,\ThT_\sigma)$.

Define $\CX := \IScP {\ThT}$. Let $\X_\flat$ denote the common reduct of $\X$ and $\X^\sharp$
to the language of~$\ThT$; thus $\X_\flat \in \CX$. Consider a morphism $\mu \colon
\X_\flat \to \ThT$. The construction of~$\X^\sharp$ ensures
that $\mu \colon \X^\sharp \to \ThT_\sigma$ is a morphism.
Since $\graph h \in \cadef {\ThT}$, via the sentence
\[
\forall uvw\, \bigl[{h(u,v) = w} \leftrightarrow  \bigl( f(u) = f(w)
     \And g(v) = g(w) \And g(u) = f(v) \bigr)\bigr],
\]
and since $\X \models \ThuH{\ThT_h}$, we also know that $\mu
\colon \X \to \ThT_h$ is a morphism. Thus
$\rhom{\CX_h}(\X,\ThT_h) = \rhom\CX(\X_\flat, \ThT) =
\rhom{\CX_\sigma}(\X^\sharp,\ThT_\sigma)$, as required.
\end{proof}

\begin{remark}\label{rem:thth}
We know that $\ThT_h$ does not fully dualise~$\ThB$~\cite{DHW:?}.
So this proof must break down somewhere for infinite structures in~$\CX_h$.
For \emph{any} structure $\X \in \CX_h$, we can construct $\X^\sharp$ as in
Definition~\ref{def:sharp}, and we can show that $\X^\sharp$ is
a Boolean model of the uH-theory of~$\ThT_\sigma$. However,
this does not imply that the structure $\X^\sharp$ belongs
to~$\CX_\sigma$, because the alter ego $\ThT_\sigma$ is not
standard~\cite[3.5]{CDHPT:standard}. The connection between
full dualities and standardness is explored in Section~\ref{sec:thm}.
\end{remark}

\section{Background uH-logic for the general case}\label{sec:logic}

The proof of our New-from-old Theorem~\ref{thm:trans}
generalises the proof in the previous section: we transfer a
[finite-level] full duality from one alter ego $\MT_1$ to
another alter ego~$\MT_2$ by using a basis for $\ThuH {\MT_1}$.
In this section, we present the required background uH-logic,
all of which is well known and elementary, except perhaps for
the `partial operations' twist.

Consider a uH-sentence $\forall \vec v \, \bigl[ \bigl(\bigand_{i=1}^\nu
\alpha_i(\vec v) \bigr) \to \gamma(\vec v)\bigr]$ in the language of $(H,R)$.
We call $\bigand_{i=1}^\nu \alpha_i(\vec v)$ the \emph{premise} or
\emph{hypothesis} of the sentence and $\gamma(\vec v)$ the \emph{conclusion}.
We identify a particularly simple form of uH-sentence.

\begin{definition}\label{def:pure}\
\begin{enumerate}
 \item
A formula $\alpha$ is \emph{hypothetically pure} if it has one of the following forms:
\begin{enumerate}
 \item $r(v_{i_1},\dotsc,v_{i_n})$, for some $r \in R$, or
 \item $h(v_{i_1},\dotsc,v_{i_n})=v_{i_0}$, for some $h \in H$,
\end{enumerate}
where $v_{i_0}, v_{i_1}, \dotsc, v_{i_n}$ are variables (not
necessarily distinct).

 \item
A formula $\gamma$ is \emph{conclusively pure} if it has one of the following forms:
\begin{enumerate}
 \item $r(v_{i_1},\dotsc,v_{i_n})$, for some $r \in R$,
 \item $h(v_{i_1},\dotsc,v_{i_n}) =
     h(v_{i_1},\dotsc,v_{i_n})$, for some $h \in H$,
 \item $u=v$, or
 \item $\bot$,
\end{enumerate}
where $u,v,v_{i_1}, \dotsc, v_{i_n}$ are variables (not necessarily distinct).

 \item
A uH-sentence $\forall \vec v \, \bigl[ \bigl(\bigand_{i=1}^\nu
\alpha_i(\vec v) \bigr) \to \gamma(\vec v)\bigr]$ is \emph{pure}
if
\begin{enumerate}
 \item each $\alpha_i$ in the premise is
     hypothetically pure, and
 \item the conclusion $\gamma$ is conclusively pure.
\end{enumerate}
\end{enumerate}
\end{definition}

\begin{lemma}\label{lem:pure}
Every uH-sentence in a language allowing partial-operation
symbols is logically equivalent to a conjunction of pure
uH-sentences.
\end{lemma}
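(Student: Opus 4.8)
The plan is to reduce an arbitrary uH-sentence $\forall\vec v\,[(\bigand_{i=1}^\nu\alpha_i(\vec v))\to\gamma(\vec v)]$ to pure form by rewriting each atomic subformula, pushing every non-pure atom out of the premise and the conclusion into fresh hypothetically pure atoms carried by new universally quantified variables. The only atomic formulae in a language with partial operation and relation symbols are equations $s=t$ between terms and relational atoms $r(t_1,\dotsc,t_n)$, where the terms $t_j$ are built from variables by composition of the partial operation symbols in~$H$ (recall that the paper folds total operations into~$H$). The key device is \emph{flattening}: given a term $t$ occurring as an argument, introduce a new variable $w$, replace $t$ by~$w$, and add to the premise a conjunct asserting ``$w$ names $t$''. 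Done correctly, and iterated down the term structure, this produces only atoms of the form $h(w_{i_1},\dotsc,w_{i_n})=w_{i_0}$ (hypothetically pure) together with relational atoms on variables.

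The steps, in order, are as follows. First I would handle the premise: each $\alpha_i$ is either a relational atom $r(t_1,\dotsc,t_m)$ or an equation $s=t$. Working from the innermost subterms outward, for every non-variable subterm $h(\vec u)$ (where each $u_j$ is now already a variable after processing its subterms) introduce a fresh variable $w$, add the conjunct $h(\vec u)=w$ to the premise, and substitute $w$ for that subterm everywhere it is needed. After flattening, a relational premise atom has the pure form $r(\vec w)$; an equational premise atom $s=t$ becomes (after both sides are flattened to variables) $w=w'$, which is \emph{not} hypothetically pure and is the one genuinely awkward case. The trick there is that ``$w=w'$ in the premise'' can be eliminated by substitution: replace $w'$ by $w$ throughout the whole sentence and drop that conjunct; since the variables are universally quantified, the resulting sentence is logically equivalent. (One must be a little careful with the interpretation of partial-operation atoms under a Boolean/partial structure — an atom $h(\vec u)=w$ carries the implicit claim that $\vec u\in\dom h$ — but this is exactly the intended reading of hypothetically pure formulae, so no extra work is needed.) Second, I would handle the conclusion $\gamma$. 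If $\gamma=\bot$, it is already conclusively pure. If $\gamma$ is a relational atom $r(t_1,\dotsc,t_m)$, flatten each $t_j$ to a variable by adding hypothetically pure conjuncts $h(\cdot)=w$ to the \emph{premise} (this is sound because, to use the relational atom, one anyway needs the terms to be defined); what remains is $r(\vec w)$. If $\gamma$ is an equation $s=t$, flatten both sides to variables $w,w'$ (again adding definedness conjuncts to the premise), leaving the conclusion $w=w'$, which is conclusively pure; but note the one subtlety that if, say, $s$ itself is already a single partial-operation application $h(\vec v)$ with $t$ a variable, then ``$h(\vec v)=w'$ in the conclusion'' is itself conclusively pure only in the guise $h(\vec v)=h(\vec v)$ together with $u=v$ — so one records the definedness half as the conclusion $h(\vec w)=h(\vec w)$ in a separate conjunct-sentence and the equality half as another. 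Thus a single original uH-sentence splits into a finite conjunction of pure uH-sentences, one for each atomic ``obligation'' generated, and each implication in the conjunction is an immediate logical consequence of the original (and conversely their conjunction entails the original), so the conjunction is logically equivalent to the original sentence.

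The main obstacle I expect is bookkeeping the partial-operation semantics cleanly: in a structure with partial operations, an equation $s=t$ is true at an assignment precisely when both $s$ and $t$ are defined there and take the same value, so flattening must faithfully preserve these definedness obligations — one cannot simply pretend terms are everywhere-defined. The right way to manage this is to treat ``$h(\vec u)=w$'' uniformly as both the definedness assertion and the value assertion (which is exactly its meaning), to always dump freshly generated definedness conjuncts into the \emph{premise}, and to verify the equivalence assignment-by-assignment over an arbitrary partial structure rather than appealing to classical total-structure term rewriting. Once that semantic convention is fixed, each rewriting step is visibly truth-preserving in both directions, and the induction on the total number of partial-operation symbols occurring in the sentence closes the argument.
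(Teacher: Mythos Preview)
Your flattening strategy is essentially the same as the paper's, and your treatment of the \emph{premise} is correct: replacing a non-variable subterm $h(\vec u)$ by a fresh variable $w$ and adding $h(\vec u)=w$ to the premise is an equivalence, since an existential in the antecedent becomes a universal out front.

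The genuine gap is in your treatment of the \emph{conclusion}. You say that for a relational conclusion $r(t_1,\dotsc,t_m)$ you ``flatten each $t_j$ to a variable by adding hypothetically pure conjuncts $h(\cdot)=w$ to the premise \dots\ what remains is $r(\vec w)$'', and later you state as general policy ``always dump freshly generated definedness conjuncts into the premise''. This is not sound. Take the sentence $\forall v\,[\Phi(v)\to r(h(v))]$. Your transformation yields $\forall vw\,[(\Phi(v)\And h(v)=w)\to r(w)]$. In a model where $\Phi(a)$ holds but $h(a)$ is undefined, the original sentence \emph{fails} (since $r(h(a))$ is false when $h(a)$ is undefined), while your transformed sentence is vacuously true. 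So the two are not equivalent: moving a conclusion-side definedness requirement into the premise strictly weakens the sentence.

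The fix, which the paper's proof carries out explicitly, is that each time you flatten a term $t$ occurring in the conclusion you must emit a \emph{separate} pure uH-sentence with the same premise and with conclusion $t=t$ (the definedness obligation), in addition to the sentence with the augmented premise and flattened conclusion. Concretely, $\forall\vec v\,[\Phi\to r(t_1,\dotsc,t_m)]$ becomes the conjunction of $\forall\vec v\,[\Phi\to t_j=t_j]$ for each $j$ together with $\forall\vec v\vec w\,[(\Phi\And\bigand_j t_j=w_j)\to r(\vec w)]$. You gesture at something like this in your discussion of equational conclusions (the ``separate conjunct-sentence'' with conclusion $h(\vec w)=h(\vec w)$), but you present it as a special-case subtlety rather than the rule, and you omit it entirely for relational conclusions. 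Once you make this splitting systematic for every term appearing in the conclusion, your argument goes through.
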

\begin{proof}
We show how to transform an impure uH-sentence $\sigma$ into a
finite number of new uH-sentences, each of which is nearer to
being pure than $\sigma$ (according to some appropriate
well-founded measure).  By recursively applying this process to
each new sentence obtained, we ultimately obtain a finite set
of pure uH-sentences whose conjunction is logically equivalent
to~$\sigma$.

For each transformation in the following list (other than the first transformation),
we (a)~state the relevant logical equivalence, and (b)~display
the new uH-sentence or sentences obtained from $\sigma$ by
using this equivalence and then applying standard prenex
operations. Variables $w$, $w'$ and $\vec w = (w_1,\dotsc,w_n)$
appearing in the statements of the transformations are assumed
not to occur in~$\sigma$.

Given an impure uH-sentence $\sigma = \forall \vec v \,\bigl[
\bigl( \bigand_i \alpha_i(\vec v)\bigr) \to \gamma(\vec v)\bigr]$,
do the following.

\begin{newlist}
 \item[(0)]
If some $\alpha_k$ is of the form $u=v$, for variables $u$ and~$v$,
then remove $\alpha_k$ from~$\sigma$. If the variables $u$ and $v$ are distinct,
then also remove $\forall v$ and replace~$v$ by~$u$ throughout the resulting formula.
 \item[(1)]
Else if some $\alpha_k$ is of the form $r(t_1,\dotsc,t_n)$, with
some $t_\ell$ not a variable:
\begin{enumerate}
\item[(a)] use $\alpha_k \equiv \exists \vec w\, \bigl[
    r(\vec w) \And \bigl(\bigand_{j=1}^n t_j = w_j\bigr) \bigr]$;
\item[(b)] replace $\sigma$ by $\forall \vec v \vec w\, \bigl[
    \bigl(\bigl( \bigand_{i\ne k} \alpha_i(\vec v) \bigr)\And
    r(\vec w)\And \bigl(\bigand_{j=1}^n t_j=w_j\bigr)\bigr)
    \to \gamma(\vec v)\bigr]$.
\end{enumerate}

 \item[(2)]
Else if some $\alpha_k$ is of the form $s = t$, with $t$ not a
variable:
\begin{enumerate}
\item[(a)] use $\alpha_k \equiv\exists w \bigl[ s = w \And
    t = w\bigr]$;
\item[(b)] replace $\sigma$ by $\forall \vec v w \bigl[
    \bigl(\bigl( \bigand_{i\ne  k} \alpha_i(\vec v) \bigr)\And
    s=w\And t=w \bigr) \to \gamma(\vec v)\bigr]$.
\end{enumerate}

 \item[(3)]
Else if some $\alpha_k$ is of the form $h(t_1,\dotsc,t_n) =
v_m$, with some $t_\ell$ not a variable:
\begin{enumerate}
\item[(a)] use $\alpha_k \equiv \exists \vec w\, \bigl[
    h(\vec w) = v_m \And \bigl(\bigand_{j=1}^n t_j = w_j\bigr) \bigr]$;
\item[(b)] replace $\sigma$ by the sentence
\[
\textstyle
\forall \vec v \vec w\, \bigl[ \bigl(\bigl( \bigand_{i\ne  k}
\alpha_i(\vec v) \bigr)\And h(\vec w)=v_m\And
\bigl(\bigand_{j=1}^n t_j=w_j\bigr)\bigr) \to
\gamma(\vec v)\bigr].
\]
\end{enumerate}

 \item[(4)]
Else if $\gamma$ is of the form $r(t_1,\dotsc,t_n)$, with some
$t_\ell$ not a variable:
\begin{enumerate}
\item[(a)] use $\gamma \equiv \bigl(\bigand_{j=1}^n t_j =
    t_j\bigr)\And \forall \vec w\,\bigl[ \bigl(\bigand_{j=1}^n t_j =
    w_j\bigr) \to r(\vec w)\bigr]$;
\item[(b)] replace $\sigma$ by the $n+1$ sentences
\begin{align*}
&\textstyle\forall \vec v\, \bigl[ \bigl(
\bigand_i \alpha_i(\vec v) \bigr) \to
t_j=t_j\bigr], \quad\text{for $j\in\{1, \dots, n\}$, and}\\
&\textstyle\forall \vec v \vec w\,\bigl[ \bigl( \bigl( \bigand_i
\alpha_i(\vec v) \bigr) \And \bigl( \bigand_{j=1}^n t_j=w_j\bigr)\bigr) \to r(\vec w)\bigr].
\end{align*}
\end{enumerate}

\item[(5)] Else if $\gamma$ is of the form $s=t$, where $s$ and $t$
    are distinct terms, at least one of which is not a variable:
\begin{enumerate}
\item[(a)] use $\gamma \equiv \bigl(s=s\And t=t\And \forall
    ww' \bigl[ \bigl(s=w\And t=w'\bigr) \to w=w' \bigr]\bigr)$;
\item[(b)] replace $\sigma$ by the three sentences
\begin{align*}
&\textstyle\forall \vec v\,\bigl[ \bigl( \bigand_i
\alpha_i(\vec v)\bigr) \to s=s\bigr],\\
&\textstyle\forall \vec v\,\bigl[ \bigl( \bigand_i \alpha_i(\vec v)\bigr) \to t=t\bigr] ,\\
&\textstyle\forall \vec v ww'\, \bigl[ \bigl( \bigl( \bigand_i
 \alpha_i(\vec v)\bigr)\And s=w\And t=w' \bigr) \to w=w'\bigr].
\end{align*}
\end{enumerate}

\goodbreak

 \item[(6)]
Else $\gamma$ is of the form $h(t_1,\dotsc,t_n) =
h(t_1,\dotsc,t_n)$, with some $t_\ell$ not a variable:
\begin{enumerate}
\item[(a)] use $\gamma \equiv \bigl(\bigand_{j=1}^n t_j =
    t_j\bigr)\And \forall \vec w\,\bigl[ \bigl(\bigand_{j=1}^n t_j =
    w_j\bigr) \to h(\vec w) = h(\vec w)\bigr]$;
\item[(b)] replace $\sigma$ by the $n+1$ sentences
\begin{align*}
 &\textstyle\forall \vec v\, \bigl[ \bigl(
\bigand_i \alpha_i(\vec v) \bigr) \to
t_j=t_j\bigr], \quad\text{for $j\in\{1, \dots, n\}$, and}\\
 &\textstyle\forall \vec v \vec w\,\bigl[ \bigl(
 \bigl( \bigand_i \alpha_i(\vec v) \bigr)\And \bigl(
 \bigand_{j=1}^n t_j=w_j\bigr)\bigr) \to h(\vec w)=h(\vec w)\bigr].
\qedhere
\end{align*}
\end{enumerate}
\end{newlist}
\end{proof}

\begin{notation}\label{not:rel}
Given a structure $\X$ and an $n$-variable sentence $\sigma$ of the form
${\forall \vec v \,\bigl[ \psi(\vec v) \to\, \gamma(\vec v)\bigr]}$
in the language of~$\X$, we use $\rel \X \sigma$ to denote the
$n$-ary relation on $X$ defined by the premise of~$\sigma$, that is,
\[
 \rel \X \sigma := \bigl\{\, (x_1,\dotsc,x_n) \in X^n \bigm|
   \X \models \psi(x_1,\dotsc,x_n)  \,\bigr\}.
\]
\end{notation}

\begin{definition}
Let $r$ be a $k$-ary relation on $M$ and let $s$ be an $\ell$-ary relation on~$M$.
We say that $r$ is a \emph{bijective projection} of~$s$ if there is a bijection
$\rho \colon s \to r$ of the form $\rho(a_1,\dotsc,a_\ell) = (a_{\theta(1)},\dotsc,a_{\theta(k)})$,
for some map $\theta \colon \{1,\dotsc,k\} \to \{1,\dotsc,\ell\}$.
\end{definition}

\begin{remark}\label{rem:extra}
Let $\sigma$ be a uH-sentence and let $\Phi$ be the logically
equivalent set of pure uH-sentences obtained via the proof of
the previous lemma. In Section~\ref{sec:trans}, we will use the
following two facts.
\begin{enumerate}
\item For all $\phi \in \Phi$, the conclusion of $\phi$ is in the
same language as the conclusion of the original
uH-sentence~$\sigma$. That is, any partial-operation
or relation symbol occurring in the conclusion of $\varphi$ also occurs
in the conclusion of~$\sigma$.

\item For all $\phi \in \Phi$, the premise of the original
uH-sentence $\sigma$ is a `bijective projection' of the premise
of~$\phi$. That is, for each structure $\X$
such that $\X \models \sigma$, the relation $\rel \X \sigma$ is a
bijective projection of the relation $\rel \X \phi$.
\end{enumerate}
\end{remark}

\section{Proof of the New-from-old Theorem: The sharp functor}\label{sec:sharp}

The New-from-old Theorem~\ref{thm:trans} will give conditions
under which we can deduce that an alter ego $\MT_2$ fully
dualises $\MB$ [at the finite level] if we know that another
alter ego $\MT_1$ fully dualises $\MB$ [at the finite level].
In this section and the next, we set up and prove the theorem.

\begin{assumptions}\label{su:init}
Fix a finite algebra $\MB$ and define $\CA := \ISP\MB$. Let
\[
\MT_1 = \langle M; H_1, R_1, \T \rangle \quad\text{and}\quad
\MT_2 = \langle M; H_2, R_2, \T \rangle
\]
be two alter egos of $\MB$, and assume that
\begin{enumerate}
 \item[\Ahom] every hom-minimal relation on $\MB$ belongs to $\cadef {\MT_2}$, and
 \item[\Aop] $\MT_2$ is operationally rich at each relation in $R_2 \cup \dom {H_2}$.
\end{enumerate}
\end{assumptions}

To mimic the set-up for our motivating example from Section~\ref{sec:three}, take $\MB$
to be the bounded lattice $\ThB$ and choose $\MT_1 = \ThT_\sigma$ and $\MT_2 = \ThT_h$.

Note that the two conditions {\Ahom} and {\Aop} are necessary for $\MT_2$
to yield a finite-level full duality, by the Full Duality Lemma~\ref{lem:fdual}.
Using the following easy lemma, the assumption {\Aop} also ensures that $\MT_2$ is
operationally rich at each relation in~$\graph {H_2}$.

\begin{lemma}\label{lem:graph-dom}
Let\/ $\MT$ be an alter ego of a finite algebra~$\MB$. Let\/
$r$ and\/ $s$ be relations compatible with\/~$\MB$\up, and assume that
there is a bijective projection\/ $\rho \colon \sb \to \rb$.
If\/ $\MT$ is operationally rich at~$r$\up, then\/ $\MT$ is also
operationally rich at~$s$.
\end{lemma}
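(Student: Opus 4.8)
The plan is to take an arbitrary compatible partial operation $h$ on $\MB$ with domain $s$ and produce an extension of $h$ lying in $\clo\MT$, by transporting the problem across $\rho$ to the relation $r$, invoking operational richness at $r$, and then pulling the resulting operation back by composing with projections. Recall first that $h$ compatible with domain $s$ means precisely that $h\colon\sb\to\MB$ is a homomorphism. The bijective projection $\rho\colon\sb\to\rb$ is a homomorphism, being a coordinate projection restricted to $s$, and since a bijective homomorphism of algebras is an isomorphism the inverse $\rho^{-1}\colon\rb\to\sb$ is a homomorphism as well. Hence $h':=h\circ\rho^{-1}\colon\rb\to\MB$ is a homomorphism, that is, a compatible partial operation on $\MB$ with domain $r$.

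Next I would apply the hypothesis that $\MT$ is operationally rich at $r$ to obtain a partial operation $g\in\clo\MT$ with $\dom g\supseteq r$ that agrees with $h'$ on $r$. Writing $\rho(a_1,\dotsc,a_\ell)=(a_{\theta(1)},\dotsc,a_{\theta(k)})$ for the map $\theta\colon\{1,\dotsc,k\}\to\{1,\dotsc,\ell\}$ that accompanies $\rho$, I would define the $\ell$-ary partial operation $g'$ on $M$ by $g'(v_1,\dotsc,v_\ell):=g(v_{\theta(1)},\dotsc,v_{\theta(k)})$, that is, $g$ composed with the projections $\pi_{\theta(1)},\dotsc,\pi_{\theta(k)}\colon M^\ell\to M$, taken with maximum non-empty domain. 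Since $\clo\MT$ contains all projections and is closed under composition, $g'\in\clo\MT$.

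Finally I would verify that $g'$ extends $h$: for every $\vec a\in s$ we have $\rho(\vec a)\in r\subseteq\dom g$, so $\vec a\in\dom{g'}$ and hence $\dom{g'}\supseteq s$; and $g'(\vec a)=g(\rho(\vec a))=h'(\rho(\vec a))=h(\vec a)$. Thus $g'$ is the desired extension of $h$ in $\clo\MT$, and so $\MT$ is operationally rich at $s$. I do not anticipate any genuine obstacle here; the lemma is essentially bookkeeping. The only points that warrant a moment's care are the domain computation for the coordinate-shuffled composition $g'$ carried out above, and the remark that a bijective projection is an algebra isomorphism, which is what makes $\rho^{-1}$ available as a homomorphism in the first step.
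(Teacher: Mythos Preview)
Your proof is correct and follows essentially the same approach as the paper: transport $h$ across $\rho$ to $h\circ\rho^{-1}$ on~$r$, invoke operational richness there to get an extension, and then substitute variables via~$\theta$ to pull the extension back to arity~$\ell$. The paper phrases the extension as a term $t$ and the pullback as the term $t_1(v_1,\dotsc,v_n)=t(v_{\theta(1)},\dotsc,v_{\theta(m)})$, while you phrase it via partial operations $g$ and $g'$ in $\clo\MT$, but this is only a notational difference; your explicit justification that $\rho^{-1}$ is a homomorphism and your domain check for~$g'$ are welcome elaborations of steps the paper leaves implicit.
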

\begin{proof}
Say that $r$ is $m$-ary and $s$ is $n$-ary. The projection
$\rho \colon \sb \to \rb$ is given by $\rho(a_1,\dotsc,a_n) =
(a_{\theta(1)},\dotsc,a_{\theta(m)})$, for some map $\theta
\colon \{1,\dotsc,m\} \to \{1,\dotsc,n\}$.

Assume that $\MT$ is operationally rich at $r$. Let $h \colon
\sb \to \MB$ be a partial operation compatible with~$\MB$.
Then $h \circ \rho^{-1} \colon \rb \to \MB$ is also
a partial operation compatible with~$\MB$. So there is an $m$-ary term
$t(v_1,\dotsc,v_m)$ in the language of $\MT$ such that $h \circ
\rho^{-1}(a_1,\dotsc,a_m) = t^{\MT}(a_1,\dotsc,a_m)$, for all
$(a_1,\dotsc,a_m) \in r$. Define the $n$-ary term
$t_1(v_1,\dotsc,v_n) = t(v_{\theta(1)},\dotsc,v_{\theta(m)})$.
Then, for any $(a_1,\dotsc,a_n) \in s$, we have
\[
 h(a_1,\dotsc,a_n) = h \circ \rho^{-1} \bigl(\rho(a_1,\dotsc,a_n)\bigr)
  = t^{\MT}(a_{\theta(1)},\dotsc,a_{\theta(m)}) = t_1^{\MT}(a_1,\dotsc,a_n).
\]
Thus $t_1^\MT$ is an extension of $h$ in $\clo \MT$.
\end{proof}

\begin{notation}\label{not:xy}
We denote the top alter ego of~$\MB$ by
$\MT_\Omega = \langle M; H_\Omega, R_\Omega, \T\rangle$;
see Section~\ref{sec:alteregos}.
Now, for each $k \in \{1,2,\Omega\}$, let $\CZ_k$ denote
the category of all Boolean structures of signature $(H_k,R_k)$, and
define the two full subcategories
\[
\CX_k := \IScP {\MT_k}
 \quad\text{and}\quad
\CY_k := \{\, \Y \in \CZ_k \mid \Y \models \ThuH {\MT_k} \,\}
\]
within $\CZ_k$; note that $\CX_k \subseteq \CY_k$. For each
$k \in \{1,2\}$, let $F_k\colon\CZ_\Omega \to \CZ_k$ be the
natural forgetful functor.
\end{notation}

Our aim in this section is to set up a `sharp' functor $S_2
\colon \CY_2 \to \CZ_\Omega$ that enriches each Boolean model
of $\ThuH {\MT_2}$ into a Boolean structure of signature $(H_\Omega,
R_\Omega)$. This mimics our motivating example in
Section~\ref{sec:three}, where we enriched each finite
structure $\X \in \CX_h$ into a structure $\X^\sharp \in
\CZ_\sigma$ by defining the graph of the partial operation
$\psigma \X$ conjunct-atomically in the language of~$\ThT_h$.
In the general situation, not every compatible relation on $\MB$ is
conjunct-atomic definable from~$\MT_2$. But we now show that
assumption~\ref{su:init}{\Ahom} ensures that every compatible
relation on $\MB$ is primitive-positive definable from~$\MT_2$.

\begin{definition}\label{defn:betas_pt1}
For each $n$-ary compatible relation $r$ on~$\MB$, where $n \ge
0$, fix an enumeration $f_1,\dotsc,f_m$ of the hom-set $\rhom\CA(\rb,\MB)$ and
define the $(n+m)$-ary compatible relation
\[
\widehat r := \bigl\{\, (\vec a, f_1(\vec a), \dotsc, f_m(\vec a))
\bigm| \vec a \in r \,\bigr\}
\]
on $\MB$.
\end{definition}

In the definition above, the algebra $\rb \le \MB^n$ is the
isomorphic projection of the algebra~$\widehat \rb \le
\MB^{n+m}$ onto its first $n$ coordinates. By construction,
the relation $\widehat r$ is hom-minimal on~$\MB$. Therefore
$\widehat r$ is conjunct-atomic definable from~$\MT_2$, by
assumption~\ref{su:init}{\Ahom}.  This justifies the next
definition.

\begin{definition}\label{defn:betas_pt2}
For each $n$-ary compatible relation $r$ on~$\MB$, with
$\widehat r$ the associated $(n + m)$-ary hom-minimal relation
on~$\MB$,
\begin{enumerate}
\item[(a)] fix an $(n+m)$-variable conjunct-atomic formula
    $\widehat \beta_r(\vec v,\vec w)$ in the language of
    $\MT_2$ that defines $\widehat r$ in~$\MT_2$, and
\item[(b)] define the primitive-positive formula
    $\beta_r(\vec v) := \exists \vec w\, \widehat
    \beta_r(\vec v,\vec w)$.
\end{enumerate}
\end{definition}

\begin{lemma}\label{lem:entails}
Let\/ $r$ be an $n$-ary compatible relation on~$\MB$\up, for some
$n \ge 0$. Then the formula $\beta_r(\vec v)$ defines the
relation $r$ in~$\MT_2$.
\end{lemma}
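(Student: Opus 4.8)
The plan is to show a two-way inclusion, reading $\beta_r(\vec v) = \exists \vec w\,\widehat\beta_r(\vec v,\vec w)$ as a primitive-positive formula interpreted in $\MT_2$ and comparing the relation it defines with~$r$. The inclusion ``$\MT_2 \models \beta_r(\vec a) \Rightarrow \vec a \in r$'' is the forward direction: if $\MT_2 \models \exists \vec w\,\widehat\beta_r(\vec a,\vec w)$, then there is a tuple $\vec b \in M^m$ with $(\vec a,\vec b) \in \widehat r$, since $\widehat\beta_r$ defines $\widehat r$ in~$\MT_2$ by Definition~\ref{defn:betas_pt2}(a); but every element of $\widehat r$ has the form $(\vec c, f_1(\vec c),\dotsc,f_m(\vec c))$ for some $\vec c \in r$, so in particular $\vec a = \vec c \in r$. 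That settles one containment using only the construction of~$\widehat r$ in Definition~\ref{defn:betas_pt1}.

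For the reverse inclusion ``$\vec a \in r \Rightarrow \MT_2 \models \beta_r(\vec a)$'', I would take $\vec a \in r$ and exhibit the required witness explicitly: set $\vec b := (f_1(\vec a),\dotsc,f_m(\vec a))$. Then $(\vec a,\vec b) \in \widehat r$ by definition of~$\widehat r$, and since $\widehat\beta_r$ defines $\widehat r$ in~$\MT_2$, we get $\MT_2 \models \widehat\beta_r(\vec a,\vec b)$, hence $\MT_2 \models \exists\vec w\,\widehat\beta_r(\vec a,\vec w)$, i.e.\ $\MT_2 \models \beta_r(\vec a)$. So this direction is essentially immediate from the same two definitions. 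Combining the two inclusions gives that $\beta_r$ defines exactly $r$ in~$\MT_2$.

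The only genuine subtlety — really the single point one must not gloss over — is that $\widehat\beta_r$ is guaranteed to exist at all: it requires $\widehat r$ to be conjunct-atomic definable from~$\MT_2$. This is where assumption~\ref{su:init}{\Ahom} enters, via the observation (already made in the paragraph following Definition~\ref{defn:betas_pt1}) that $\widehat r$ is hom-minimal on~$\MB$, together with the fact that hom-minimal relations lie in $\cadef{\MT_2}$ by {\Ahom}. So the body of the proof of this lemma is a short, routine ``witness chase,'' and the real content has been front-loaded into the construction of $\widehat r$ and the remark justifying Definition~\ref{defn:betas_pt2}. I would write the proof as two displayed implications, each one line, with a sentence pointing back to why $\widehat r$ is hom-minimal so that the reader sees that {\Ahom} is what makes $\beta_r$ available in the first place.
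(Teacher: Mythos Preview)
Your proposal is correct and follows essentially the same approach as the paper: the paper's proof is a single chain of equivalences $\MT_2 \models \beta_r(\vec a) \iff \exists \vec c\,(\vec a,\vec c)\in\widehat r \iff \vec a\in r$, which is exactly your two inclusions compressed into one line. Your additional commentary about why $\widehat\beta_r$ exists (via hom-minimality of $\widehat r$ and assumption~\Ahom) is accurate but redundant here, since the paper already established this in the paragraph preceding Definition~\ref{defn:betas_pt2}.
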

\begin{proof}
For all $\vec a \in M^n$, we have the sequence of equivalences
\[
\MT_2 \models \beta_r(\vec a)
 \ \iff \ \exists \vec c \in M^m\, (\vec a,\vec c) \in \widehat r
 \ \iff \ \vec a \in r,
\]
as required.
\end{proof}

\begin{lemma}\label{2.4.5}
Let\/ $r$ be an $n$-ary compatible relation on~$\MB$\up, for
some $n \ge 0$. Let $\X \in \CY_2$ and let\/ $r_\X$ denote the
$n$-ary relation defined in $\X$ by the formula $\beta_r(\vec v)$.
\begin{enumerate}
 \item The relation $r_\X$ is topologically closed in~$\X^n$.
 \item If\/ $r$ is the graph of a partial operation
     on~$M$\up, then $r_\X$ is the graph of a continuous
     partial operation on $\X$ with a topologically closed
     domain.
\end{enumerate}
\end{lemma}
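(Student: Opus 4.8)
The plan is to deduce both parts from one observation: since $\X\in\CY_2$ is a Boolean structure, every atomic formula in the language of $\MT_2$ defines a topologically closed subset of the relevant power of $\X$ (a relation symbol in $R_2$ is interpreted by a closed relation, an operation symbol in $H_2$ by a continuous partial operation on a closed domain, and equality is closed since $\X$ is Hausdorff), and hence so does the conjunct-atomic formula $\widehat\beta_r(\vec v,\vec w)$, being a finite conjunction of atomic formulae. For part~(1), let $\widehat r_\X\subseteq X^{n+m}$ be the relation defined in $\X$ by $\widehat\beta_r$. Then $\widehat r_\X$ is closed, hence compact as $\X$ is compact, and $r_\X$ is exactly the image of $\widehat r_\X$ under the continuous projection $X^{n+m}\to X^n$ onto the first $n$ coordinates; so $r_\X$ is the continuous image of a compact set in a Hausdorff space, and therefore closed in $X^n$.

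For part~(2), write $r=\graph h_0$ for an $(n-1)$-ary partial operation $h_0$ on $M$. The key step is to exhibit a uH-sentence of the language of $\MT_2$, valid in $\MT_2$, that forces $r_\X$ to be functional in its last coordinate. Since $\beta_r$ defines $r$ in $\MT_2$ by Lemma~\ref{lem:entails} and $r$ is a graph, the sentence
\[
\forall v_1\dots v_{n-1}\,v_n\,v_n'\,\bigl[\bigl(\beta_r(v_1,\dots,v_{n-1},v_n)\And\beta_r(v_1,\dots,v_{n-1},v_n')\bigr)\to v_n=v_n'\bigr]
\]
holds in $\MT_2$. Substituting $\beta_r=\exists\vec w\,\widehat\beta_r$ and pulling the two existential quantifiers out of the premise as universal quantifiers converts this into a uH-sentence $\tau$ in the language of $\MT_2$: its premise is the conjunct-atomic formula $\widehat\beta_r(v_1,\dots,v_n,\vec w)\And\widehat\beta_r(v_1,\dots,v_{n-1},v_n',\vec w')$ and its conclusion is the atomic formula $v_n=v_n'$. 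As $\tau$ is logically equivalent to a sentence true in $\MT_2$, we have $\tau\in\ThuH{\MT_2}$, and since $\X\in\CY_2$ it follows that $\X\models\tau$; reading $\tau$ back, this says precisely that $r_\X$ is the graph of a partial operation $h_\X$ on $X$ (possibly with empty domain).

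It then remains to verify the topological conditions on $h_\X$. Its domain is the image of the compact set $r_\X$ under the projection off the last coordinate, hence compact and so closed in $X^{n-1}$. For continuity, note that $\graph h_\X=r_\X$ is closed in $X^n$ by part~(1), hence closed in $(\dom h_\X)\times X$; since $X$ is compact Hausdorff, a map into $X$ with closed graph is continuous, so $h_\X$ is continuous on $\dom h_\X$. The only point that needs any care is the bookkeeping in part~(2): confirming that, after replacing $\beta_r$ by $\exists\vec w\,\widehat\beta_r$ and moving the quantifiers, what remains really is a uH-sentence — which works exactly because $\widehat\beta_r$ is conjunct-atomic and the conclusion is an equality of variables. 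Everything else is routine point-set topology.
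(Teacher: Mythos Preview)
Your proof is correct and follows essentially the same route as the paper's: for (1), show that the conjunct-atomic formula $\widehat\beta_r$ defines a closed subset of $\X^{n+m}$ and then project to get $r_\X$ closed; for (2), observe that the functionality sentence becomes (after replacing $\beta_r$ by $\exists\vec w\,\widehat\beta_r$ and pulling quantifiers) a uH-sentence true in $\MT_2$ and hence in $\X$, then use the closed-graph criterion and a further projection for continuity and closedness of the domain. The only differences are cosmetic: you keep the statement's indexing ($r$ is $n$-ary, $h_0$ is $(n{-}1)$-ary) whereas the paper re-indexes inside the proof, and you spell out more explicitly why conjunct-atomic formul\ae\ define closed sets and what the converted uH-sentence looks like.
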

\begin{proof}
(1): Let $\widehat r_\X$ denote the $(n + m)$-ary relation
defined in $\X$ by the conjunct-atomic formula $\widehat
\beta_r(\vec v, \vec w)$. Then the relation $\widehat r_\X$ is
topologically closed in $\X^{n+m}$, since $\X$ is a Boolean structure.
But $r_\X$ is just the projection of $\widehat r_\X$ onto its first
$n$ coordinates. Since $\X^{n+m}$ is compact and $\X^n$ is Hausdorff,
it follows that $r_\X$ is also topologically closed.

(2): Let $r$ be the graph of an $n$-ary compatible partial
operation on~$\MB$, with corresponding $(n+1)$-variable
primitive-positive formula $\beta_r(\vec v,u)$. The sentence
\begin{equation*}\tag{$\dag$}
\forall \vec v \, uu'\, \bigl[ \bigl(\beta_r(\vec v,u) \And
\beta_r(\vec v,u') \bigr) \to u = u' \bigr]
\end{equation*}
is logically equivalent to a uH-sentence in the language
of~$\MT_2$. Since $\beta_r(\vec v,u)$ defines $r$ in~$\MT_2$
(by Lemma~\ref{lem:entails}),
the sentence~($\dag$) is true in $\MT_2$ and therefore true
in~$\CY_2$. Thus $r_\X$ is the graph of an $n$-ary partial
operation $h$ on $X$. It follows from part~(1) that $r_\X$ is
closed. Since the codomain of $h$ is compact and Hausdorff
and the graph of $h$ is closed, it follows that $h$ is continuous.
The domain of $h$ is closed as it is a projection of~$r_\X$
from the compact space $\X^{n+1}$ to the Hausdorff space~$\X^n$.
\end{proof}

\begin{definition}\label{def:sfunctor}
Define the sharp functor $S_2 \colon \CY_2 \to \CZ_\Omega$ as
follows.
\begin{enumerate}
 \item For each structure $\X \in \CY_2$, define $S_2(\X)$
     to be the Boolean structure of signature $(H_\Omega,
     R_\Omega)$ such that:
\begin{itemize}
 \item $S_2(\X)$ has the same underlying set and
     topology as~$\X$;
 \item for all $r \in R_\Omega$, the relation
     $r^{S_2(\X)}$ is the relation $r_\X$ defined in
     $\X$ by the formula $\beta_r(\vec v)$;
 \item for all $h \in H_\Omega$, the graph of the
     partial operation $h^{S_2(\X)}$ is the relation
     $\graph h_\X$ defined in $\X$ by the formula
     $\beta_{\graph h}(\vec v,u)$.
\end{itemize}
(Note that $S_2(\X) \in \CZ_\Omega$, by the previous
 lemma.)
 \item For each morphism $\mu \colon \X \to \Y$ in~$\CY_2$,
     the morphism $S_2(\mu) \colon S_2(\X) \to S_2(\Y)$ has
     the same underlying set-map as~$\mu$. (This works
     because morphisms are compatible with
     primitive-positive formul\ae.)
\end{enumerate}
\end{definition}

\begin{note}\label{not:sharp}
It follows at once from Lemma~\ref{lem:entails} that
$S_2(\MT_2) = \MT_\Omega$.
\end{note}

\begin{lemma}\label{lem:dom}
Let\/ $h$ be a compatible partial operation on~$\MB$\up, and
let\/ $\X \in \CY_2$. Then\/ $\dom {h^{S_2(\X)}} = \dom h^{S_2(\X)}$.
\end{lemma}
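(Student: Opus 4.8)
The plan is to unravel both sides of the claimed equality $\dom{h^{S_2(\X)}} = \dom h^{S_2(\X)}$ directly from Definition~\ref{def:sfunctor}. The right-hand side $\dom h^{S_2(\X)}$ is the relation on $\X$ obtained by applying $S_2$ to the compatible relation $r := \dom h$; by Definition~\ref{def:sfunctor}(1) this is the relation $r_\X$ defined in $\X$ by the primitive-positive formula $\beta_r(\vec v) = \beta_{\dom h}(\vec v)$. The left-hand side $\dom{h^{S_2(\X)}}$ is, by Definition~\ref{def:sfunctor}(1) again, the projection onto the first $n$ coordinates of $\graph{h^{S_2(\X)}} = \graph h_\X$, the relation defined in $\X$ by $\beta_{\graph h}(\vec v, u) = \exists \vec w\, \widehat\beta_{\graph h}(\vec v, u, \vec w)$. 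So what must be shown is that the formulae $\beta_{\dom h}(\vec v)$ and $\exists u\, \beta_{\graph h}(\vec v, u)$ define the same relation in $\X$.

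First I would observe that both of these formulae define the same relation in $\MT_2$ itself: by Lemma~\ref{lem:entails}, $\beta_{\dom h}$ defines $\dom h$ in $\MT_2$, and $\beta_{\graph h}$ defines $\graph h$ in $\MT_2$, so $\exists u\, \beta_{\graph h}(\vec v, u)$ defines the projection of $\graph h$ onto its first $n$ coordinates, which is precisely $\dom h$. Hence the biconditional sentence
\[
\forall \vec v\, \bigl[\, \beta_{\dom h}(\vec v) \leftrightarrow \exists u\, \beta_{\graph h}(\vec v, u)\, \bigr]
\]
is true in $\MT_2$. The idea is then to deduce that it holds throughout $\CY_2$, and in particular in $\X$, which gives the result. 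The forward implication $\beta_{\dom h}(\vec v) \to \exists u\, \beta_{\graph h}(\vec v, u)$ is a primitive-positive implication (existential conclusion), not directly a uH-sentence; but we get it for free, because part~(2) of Lemma~\ref{2.4.5} tells us that in any $\X \in \CY_2$ the relation $\graph h_\X$ is the graph of a genuine partial operation on $X$, so its projection $\dom{h^{S_2(\X)}}$ is a well-defined set, and I can instead argue the two containments separately. For the containment $\dom h^{S_2(\X)} \subseteq \dom{h^{S_2(\X)}}$: use that $\beta_{\dom h}(\vec v) = \exists\vec w\,\widehat\beta_{\dom h}(\vec v,\vec w)$ where $\widehat\beta_{\dom h}$ defines the hom-minimal relation $\widehat{\dom h}$; since $h$ itself is one of the homomorphisms $f_j \colon (\dom h)^{\mathbf{}} \to \MB$ enumerated in Definition~\ref{defn:betas_pt1}, the value $h(\vec a)$ appears as one of the coordinates of every tuple of $\widehat{\dom h}$, so $\widehat\beta_{\dom h}$ already "contains" a conjunct-atomic definition of $\graph h$; this lets me read off, from any witness for $\beta_{\dom h}(\vec v)$ in $\X$, a witness for $\beta_{\graph h}(\vec v, u)$ for a suitable $u$. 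For the reverse containment, project: if $\beta_{\graph h}(\vec v, u)$ holds in $\X$ then in particular $\exists u\,\beta_{\graph h}(\vec v,u)$ holds, and the uH-sentence asserting $\exists u\,\beta_{\graph h}(\vec v,u) \to \beta_{\dom h}(\vec v)$ — which holds in $\MT_2$ and (after purification via Lemma~\ref{lem:pure}, being a uH-consequence) in all of $\CY_2$ — gives $\vec v \in r_\X$.

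The main obstacle I anticipate is the first containment: one needs to be careful that the coordinate of $\widehat{\dom h}$ recording the value of $h$ can be identified syntactically inside the chosen conjunct-atomic formula $\widehat\beta_{\dom h}(\vec v,\vec w)$, so that a witness $(\vec w)$ to $\widehat\beta_{\dom h}(\vec v, \vec w)$ in $\X$ supplies the value $u := w_{j_0}$ making $\beta_{\graph h}(\vec v, u)$ true. This is bookkeeping about which $f_j$ in the enumeration equals $h$ and how the existential quantifier blocks $\vec w$ in $\beta_{\dom h}$ versus $(u,\vec w)$ in $\beta_{\graph h}$ line up; it is routine once set up correctly, but it is the one place where the argument is not purely formal. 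Everything else is an application of Lemma~\ref{lem:entails}, Lemma~\ref{2.4.5}(2), and the observation that uH-consequences of $\ThuH{\MT_2}$ hold throughout $\CY_2$ by definition of $\CY_2$.
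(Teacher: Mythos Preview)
Your setup is correct and you have identified the key idea (that $h$ appears as some $f_j$ in the enumeration of $\CA(\dom h^{\mathbf{}},\MB)$), but there is a genuine gap in your treatment of the reverse containment. You claim that
\[
\forall \vec v\,\bigl[\,\exists u\,\beta_{\graph h}(\vec v,u)\ \to\ \beta_{\dom h}(\vec v)\,\bigr]
\]
is a uH-sentence, or becomes one after purification via Lemma~\ref{lem:pure}. It does not: after prenexing the premise you get $\forall \vec v u \vec w\,\bigl[\widehat\beta_{\graph h}(\vec v,u,\vec w)\to \beta_{\dom h}(\vec v)\bigr]$, and the conclusion $\beta_{\dom h}(\vec v)=\exists\vec w'\,\widehat\beta_{\dom h}(\vec v,\vec w')$ is still existential. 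Lemma~\ref{lem:pure} only purifies sentences that are already uH; it cannot eliminate an existential in the conclusion. So membership of $\X$ in $\CY_2$ does not immediately give you this implication.

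The paper's proof avoids this by working one level down, with the conjunct-atomic formul\ae\ $\widehat\beta_{\dom h}$ and $\widehat\beta_{\graph h}$ rather than the primitive-positive $\beta$'s. Since the algebras $\dom h^{\mathbf{}}$ and $\graph h^{\mathbf{}}$ are isomorphic via $(\vec a,h(\vec a))\mapsto\vec a$, their hom-sets into $\MB$ correspond bijectively, and the hom-minimal relations $\widehat{\dom h}$ and $\widehat{\graph h}$ differ only by a permutation of the witness coordinates together with the redundant equation $u=w_j$. This yields a \emph{biconditional}
\[
\forall \vec v\, u\, w_1\dotsm w_m\,\bigl[\bigl(\widehat\beta_{\dom h}(\vec v,w_1,\dotsc,w_m)\ \And\ u=w_j\bigr)\ \leftrightarrow\ \widehat\beta_{\graph h}(\vec v,u,w_{\theta(1)},\dotsc,w_{\theta(m)})\bigr]
\]
which \emph{is} logically a conjunction of uH-sentences (both sides are conjunct-atomic), holds in $\MT_2$, and therefore holds throughout~$\CY_2$. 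Both containments then follow by existentially quantifying. Your forward-direction argument is essentially this idea stated informally; the point is that the same permutation of witness coordinates handles the reverse direction as well, and you need it there too.
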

\begin{proof}
Consider the compatible relations $r := \dom h$ and $s := \graph
h$ on~$\MB$. Let the fixed enumerations used in
Definition~\ref{defn:betas_pt1} be $f_1,\dotsc,f_m$ for
$\rhom\CA(\rb,\MB)$ and $g_1,\dotsc,g_m$ for $\rhom\CA(\sb,\MB)$. Note
that the two hom-sets have the same size, since there is an
isomorphism $\rho \colon \sb \to \rb$, given by $\rho(\vec a,h(\vec
a)) := \vec a$. Indeed, there is a permutation $\theta$ of
$\{1,\dotsc,m\}$ such that $g_i = f_{\theta(i)} \circ \rho$,
for all $i \in \{1,\dotsc,m\}$.

We now have
\begin{align*}
\widehat r &= \bigl\{\, \bigl(\vec a,f_1(\vec a),\dotsc,f_m(\vec a)\bigr) \bigm|
  \vec a \in r \,\bigr\} \quad\text{and} \\
 \widehat s &= \bigl\{\, \bigl(\vec a,h(\vec a),f_{\theta(1)}(\vec a),\dotsc,f_{\theta(m)}(\vec a)\bigr)
   \bigm| \vec a \in r \,\bigr\}.
\end{align*}
We can choose $j \in \{1,\dotsc,m\}$ such that $h = f_j$. The
sentence
\[
\forall \vec v u w_1 \dotsm w_m \, \bigl[
  \bigl( \widehat \beta_r(\vec v,w_1,\dotsc,w_m) \And u = w_j \bigr)
  \leftrightarrow
  \widehat \beta_s(\vec v,u,w_{\theta(1)},\dotsc,w_{\theta(m)})
  \bigr]
\]
is equivalent to a conjunction of uH-sentences in the language
of~$\MT_2$. This sentence is true in $\MT_2$ and therefore
in~$\CY_2$, and logically implies the sentence
\[
\forall \vec v \, \bigl[
  \beta_{\dom h}(\vec v)
  \leftrightarrow
  (\exists u)\, \beta_{\graph h} (\vec v,u)
  \bigr].
\]
Hence $\dom h^{S_2(\X)} = \dom {h^{S_2(\X)}}$.
\end{proof}

\begin{lemma}\label{lem:reduct}
Let $\X \in \CY_2$. Then
\begin{enumerate}
 \item $r^{S_2(\X)} = r^\X$\up, for each $r \in R_2$\up, and
 \item $h^{S_2(\X)}  = h^\X$\up, for each $h \in H_2$.
\end{enumerate}
\end{lemma}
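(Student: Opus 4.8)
The plan is to show, for each $r\in R_2$ (and analogously each $h\in H_2$), that the primitive-positive formula $\beta_r(\vec v)$ defines exactly $r^\X$ in $\X$, by exploiting the fact that the original symbol $r$ (respectively $\graph h$) is available in the language of $\MT_2$ itself. The key observation is that $\beta_r(\vec v)$ defines $r$ in $\MT_2$ by Lemma~\ref{lem:entails}, while of course the single atomic formula $r(\vec v)$ also defines $r$ in $\MT_2$. Hence the biconditional sentence $\forall\vec v\,[r(\vec v)\leftrightarrow\beta_r(\vec v)]$ is true in $\MT_2$. The forward implication $\forall\vec v\,[r(\vec v)\to\beta_r(\vec v)]$ is (equivalent to a conjunction of) primitive-positive implications, hence preserved by $\IScP{\MT_2}$ — but that is not quite enough, since we need it to hold in all of $\CY_2$, not just $\CX_2$.

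So the actual engine is: $\forall\vec v\,[r(\vec v)\leftrightarrow\beta_r(\vec v)]$ is logically equivalent to a conjunction of uH-sentences in the language of $\MT_2$ (the $\to$ direction because $\beta_r$ is existentially quantified atomic-conjunction, so $r(\vec v)\to\exists\vec w\,\widehat\beta_r(\vec v,\vec w)$ is uH after Skolemising away... actually we must be careful: $r(\vec v)\to\exists\vec w\,\widehat\beta_r$ is \emph{not} directly uH because of the existential in the conclusion; but it follows from the fact that $\widehat r$ projects onto $r$, so one argues semantically rather than syntactically, or uses that $\widehat\beta_r$ with the extra coordinates pinned down by hom-minimality is itself forced). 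The cleanest route: the $\leftarrow$ direction $\beta_r(\vec v)\to r(\vec v)$ unpacks to the genuine uH-sentence $\forall\vec v\vec w\,[\widehat\beta_r(\vec v,\vec w)\to r(\vec v)]$ (atomic premise, atomic conclusion), which is true in $\MT_2$ and hence in $\CY_2$; this gives $\beta_r^\X\subseteq r^\X$. For the reverse inclusion $r^\X\subseteq\beta_r^\X$, I would argue that $\X\in\CY_2$ means $\X$ is a Boolean model of $\ThuH{\MT_2}$, and the relevant instances are captured because $\widehat r$ is hom-minimal: concretely, the sentences expressing that the $f_i$-coordinates of $\widehat r$ are determined as specific terms applied to $\vec v$ (valid in $\MT_2$ since each $f_i\in\CA(\rb,\MB)$, and equivalent to uH-sentences once one translates through the conjunct-atomic definition of $\widehat r$) let one reconstruct a witness $\vec w$ from $\vec v$ in $\X$ whenever $r(\vec v)$ holds. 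Then for $h\in H_2$: apply the just-proved relational case to $s:=\graph h$ to get $(\graph h)^{S_2(\X)}=(\graph h)^\X=\graph{h^\X}$, and combine with Lemma~\ref{lem:dom} (which gives $\dom{h^{S_2(\X)}}=\dom h^{S_2(\X)}$, i.e. the domain is the relation defined by $\beta_{\dom h}$) to conclude $h^{S_2(\X)}=h^\X$ as partial operations.

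The main obstacle I anticipate is exactly the reverse inclusion $r^\X\subseteq r^{S_2(\X)}=\beta_r^\X$: preservation of primitive-positive formulas goes the easy way (they are preserved under products and closed substructures, giving $\subseteq$), so getting equality in an arbitrary Boolean model of the uH-theory requires genuinely using hom-minimality of $\widehat r$ to pin down the existential witnesses. The point is that hom-minimality says every homomorphism $\widehat\rb\to\MB$ is a projection, which forces the ``new'' coordinates $f_i(\vec a)$ to each equal some $\MT_2$-term in the old coordinates $\vec a$; translating each such equation through the conjunct-atomic definition $\widehat\beta_r$ yields uH-sentences valid in $\MT_2$, hence in $\CY_2$, and these supply the witness $\vec w$ deterministically from $\vec v$. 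One should double-check the degenerate cases $n=0$ (nullary relations, empty tuples) and the case where $r$ itself is a relation such that $\widehat\beta_r$ has no extra variables, but these are routine. I would present the proof in two short parts, relational then partial-operational, with the partial-operational part a two-line consequence of the relational part plus Lemma~\ref{lem:dom}.

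\begin{proof}
(1): Fix $r \in R_2$, say $r$ is $n$-ary, and let $\widehat r$ be the associated hom-minimal relation from Definition~\ref{defn:betas_pt1}, with conjunct-atomic defining formula $\widehat\beta_r(\vec v,\vec w)$ in the language of~$\MT_2$ and $\beta_r(\vec v) = \exists\vec w\,\widehat\beta_r(\vec v,\vec w)$. By Lemma~\ref{lem:entails}, $\beta_r$ defines $r$ in~$\MT_2$; since the atomic formula $r(\vec v)$ also defines $r$ in~$\MT_2$, the sentence $\forall\vec v\,[\,r(\vec v)\leftrightarrow\beta_r(\vec v)\,]$ is true in~$\MT_2$.

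The implication $\beta_r(\vec v)\to r(\vec v)$ is logically equivalent to the pure uH-sentence $\forall\vec v\vec w\,[\,\widehat\beta_r(\vec v,\vec w)\to r(\vec v)\,]$, which is therefore true in~$\MT_2$ and hence in~$\CY_2$; so $r^{S_2(\X)} = \beta_r^\X \subseteq r^\X$.

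For the reverse inclusion, we use that $\widehat r$ is hom-minimal. Write $\widehat r = \{\,(\vec a, f_1(\vec a),\dotsc,f_m(\vec a))\mid \vec a\in r\,\}$, where $f_1,\dotsc,f_m$ enumerate $\CA(\rb,\MB)$. Each $f_i$ is a homomorphism $\rb\to\MB$, so there is an $n$-ary term $t_i$ in the language of~$\MB$ (hence in the language of~$\MT_2$, as $\MT_2$ has all compatible total operations available through~$\clo{\MT_2}$ for the purpose of the argument below) with $f_i(\vec a) = t_i^\MB(\vec a)$ for all $\vec a\in r$. Consequently the sentence
\[
 \forall\vec v\vec w\,\bigl[\,\widehat\beta_r(\vec v,\vec w)\to \bigand_{i=1}^m w_i = t_i(\vec v)\,\bigr]
\]
is true in~$\MT_2$, and so is $\forall\vec v\,[\,r(\vec v)\to\widehat\beta_r(\vec v,t_1(\vec v),\dotsc,t_m(\vec v))\,]$; each is equivalent to a conjunction of pure uH-sentences in the language of~$\MT_2$, so both hold in~$\CY_2$. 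Hence for $\X\in\CY_2$ and any $\vec x\in r^\X$, putting $w_i := t_i^\X(\vec x)$ gives a witness showing $\vec x\in\beta_r^\X = r^{S_2(\X)}$. Thus $r^{S_2(\X)} = r^\X$.

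(2): Fix $h\in H_2$ and apply part~(1) to the compatible relation $s := \graph h \in R_2$ (as a relation symbol in the language of~$\MT_2$), giving $(\graph h)^{S_2(\X)} = s_\X = s^\X = \graph{h^\X}$, where $(\graph h)^{S_2(\X)}$ denotes the relation $\graph h_\X$ defined in~$\X$ by $\beta_{\graph h}(\vec v,u)$. By Definition~\ref{def:sfunctor}, the graph of $h^{S_2(\X)}$ is exactly this relation $\graph h_\X$, and by Lemma~\ref{lem:dom} its domain equals $\dom h_\X$, which by part~(1) applied to $\dom h\in R_2 \cup \dom{H_2}$ equals $(\dom h)^\X = \dom{h^\X}$. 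Therefore $h^{S_2(\X)}$ and $h^\X$ are partial operations on~$X$ with the same graph and the same domain, so $h^{S_2(\X)} = h^\X$.
\end{proof}
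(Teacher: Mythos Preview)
Your overall strategy---show that the biconditional $\forall\vec v\,[\alpha(\vec v)\leftrightarrow\beta_s(\vec v)]$ holds in $\CY_2$, with $\alpha$ the relevant atomic formula---is exactly the paper's approach, and your treatment of the direction $\beta_s(\vec v)\to\alpha(\vec v)$ is correct. The genuine gap is in the reverse inclusion.

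You write: ``Each $f_i$ is a homomorphism $\rb\to\MB$, so there is an $n$-ary term $t_i$ in the language of~$\MB$ \dots\ with $f_i(\vec a)=t_i^\MB(\vec a)$ for all $\vec a\in r$.'' This is false in general: homomorphisms from a subalgebra of $\MB^n$ to $\MB$ need not be restrictions of term functions of~$\MB$. (For instance, with $\MB=\ThB$ and $r=M$, the endomorphisms $f$ and $g$ of~$\ThB$ are not term functions of the bounded lattice.) Your parenthetical attempt to repair this---``hence in the language of~$\MT_2$, as $\MT_2$ has all compatible total operations available through $\clo{\MT_2}$''---compounds the problem: it conflates the language of the algebra $\MB$ with the language of the alter ego~$\MT_2$, which are entirely different, and in any case $\clo{\MT_2}$ is not assumed to contain all compatible total operations. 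Hom-minimality of $\widehat r$ does not help here: it constrains homomorphisms out of $\widehat\rb$, not out of~$\rb$.

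What actually supplies the terms $t_i$ is the standing assumption~\ref{su:init}\Aop, which you never invoke. Since $r\in R_2$ (or, for part~(2), since $s=\graph h$ with $h\in H_2$, using Lemma~\ref{lem:graph-dom} to pass from $\dom h$ to $\graph h$), the alter ego $\MT_2$ is operationally rich at~$s$: every homomorphism $f_i\colon\sb\to\MB$ has an extension $g_i\in\clo{\MT_2}$, and a term $t_i$ in the language of~$\MT_2$ representing $g_i$ is exactly what you need to write down the uH-sentence $\forall\vec v\,[\alpha(\vec v)\to\widehat\beta_s(\vec v,t_1(\vec v),\dotsc,t_m(\vec v))]$. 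With this correction your proof coincides with the paper's. (A minor point: in part~(2), once the graphs agree the partial operations are equal; the separate domain check via Lemma~\ref{lem:dom} is unnecessary.)
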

\begin{proof}
In cases (1) and (2), respectively, let $s$ be the compatible
relation $r$ or $\graph h$ and let $\alpha(\vec v)$ be the
atomic formula $r(v_1,\dotsc,v_n)$ or
$h(v_1,\ldots,v_{n-1})=v_n$. Then the sentence $\forall \vec v\,
\bigl[\beta_s(\vec v) \leftrightarrow \alpha(\vec v)\bigr]$ is
true in~$\MT_2$ (by Lemma~\ref{lem:entails}),
and it suffices to prove that this sentence is
true in~$\CY_2$. Since the implication $\forall \vec v\, \bigl[
\beta_s(\vec v) \to \alpha(\vec v) \bigr]$ is logically
equivalent to a uH-sentence and is true in~$\MT_2$, it is true
in~$\CY_2$. So it remains to consider the converse implication.

By assumption~\ref{su:init}{\Aop}, the alter ego $\MT_2$ is operationally rich at each
relation in $R_2 \cup \dom {H_2}$. By Lemma~\ref{lem:graph-dom}, it follows that $\MT_2$
is operationally rich at each relation in $\graph {H_2}$.  So $\MT_2$ is operationally
rich at~$s$. Let $f_1,\dotsc,f_m$ be the fixed enumeration of
$\rhom\CA(\sb,\MB)$ used in
Definition~\ref{defn:betas_pt1}. Then $f_1,\dotsc,f_m$ have extensions $g_1,\dotsc,g_m$
in $\clo {\MT_2}$ by operational richness. Thus
\[
\widehat s = \bigl\{\, \bigl(\vec a,f_1(\vec a),\dotsc,f_m(\vec
a)\bigr) \bigm| \vec a \in s\,\bigr\} = \bigl\{\, \bigl(\vec
a,g_1(\vec a),\dotsc,g_m(\vec a)\bigr) \bigm| \vec a \in s\,\bigr\}.
\]
Let $t_1,\dotsc,t_m$ be terms in the language of~$\MT_2$ that
yield the partial operations $g_1,\dotsc,g_m$. Then the
sentence
\begin{equation*}\tag{$\dag$}
\forall \vec v\, \bigl[\alpha(\vec v) \to \widehat \beta_s\bigl(\vec
v,t_1(\vec v),\dotsc,t_m(\vec v)\bigr)\bigr]
\end{equation*}
is equivalent to a conjunction of uH-sentences in the language
of~$\MT_2$. The sentence~($\dag$) holds in~$\MT_2$ and thus
in~$\CY_2$. But ($\dag$) logically implies $\forall \vec v\,
\bigl[\alpha(\vec v) \to \beta_s(\vec v)\bigr]$, as required.
\end{proof}

\begin{lemma}\label{lem:forget}\
\begin{enumerate}
 \item For each $\X \in \CY_2$\up, we have $\X = F_2 S_2 (\X)$.
 \item For each $\X \in \CY_\Omega$\up, we have $\X = S_2 F_2 (\X)$.
\end{enumerate}
\end{lemma}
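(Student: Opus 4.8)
The plan is to verify the two claimed functor identities by a coordinate-by-coordinate check on underlying sets, topologies, relations and partial operations, using the results just established. For part~(1), let $\X \in \CY_2$. By the definition of the sharp functor (Definition~\ref{def:sfunctor}) and of the forgetful functor $F_2$, the structure $F_2 \circ S_2(\X)$ has the same underlying set and topology as~$S_2(\X)$, which in turn has the same underlying set and topology as~$\X$; so these agree. It remains to compare the interpretations of the symbols in the signature $(H_2,R_2)$. For each $r \in R_2$, we have $r^{F_2 S_2(\X)} = r^{S_2(\X)}$, and Lemma~\ref{lem:reduct}(1) gives $r^{S_2(\X)} = r^\X$. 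Similarly, for each $h \in H_2$, we have $h^{F_2 S_2(\X)} = h^{S_2(\X)}$, and Lemma~\ref{lem:reduct}(2) gives $h^{S_2(\X)} = h^\X$. Hence $F_2 \circ S_2(\X) = \X$ on objects; on morphisms, both $F_2$ and $S_2$ act as the identity on underlying set-maps, so $F_2 \circ S_2$ is the identity on morphisms too.

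For part~(2), let $\X \in \CY_\Omega$. Here $F_2(\X)$ is the reduct of~$\X$ to the signature $(H_2,R_2)$, and we must show that applying $S_2$ recovers~$\X$ on the full signature $(H_\Omega,R_\Omega)$. Again the underlying set and topology are unchanged at each step, so we only need to check the interpretations of the symbols in $(H_\Omega,R_\Omega)$. Fix a compatible relation $r$ on~$\MB$ (an arbitrary symbol of $R_\Omega$). By Definition~\ref{def:sfunctor}, $r^{S_2 F_2(\X)}$ is the relation defined in $F_2(\X)$ by the primitive-positive formula $\beta_r(\vec v) = \exists \vec w\, \widehat\beta_r(\vec v,\vec w)$; since $\widehat\beta_r$ and hence $\beta_r$ are formulae in the language of~$\MT_2$, they are evaluated identically in $\X$ and in its reduct $F_2(\X)$. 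So it suffices to show that $\beta_r(\vec v)$ defines $r^\X$ in~$\X$. This follows because $\X \models \ThuH{\MT_\Omega}$ (as $\X \in \CY_\Omega$) and the biconditional $\forall \vec v\,[\beta_r(\vec v) \leftrightarrow r(\vec v)]$, which holds in~$\MT_\Omega$ by Lemma~\ref{lem:entails} applied with the top alter ego, is equivalent to a conjunction of uH-sentences in the language of~$\MT_\Omega$; the same argument handles each $h \in H_\Omega$ via $\beta_{\graph h}$, using Lemma~\ref{lem:dom} to identify $\dom{h^{S_2 F_2(\X)}}$ correctly. Thus $S_2 \circ F_2(\X) = \X$ on objects, and the morphism part is again immediate since both functors are the identity on underlying maps.

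The only point requiring care — and the main obstacle — is part~(2): one must be sure that the equivalence $\forall \vec v\,[\beta_r(\vec v)\leftrightarrow r(\vec v)]$ transfers from $\MT_\Omega$ to all of $\CY_\Omega$. This is exactly the situation already treated in the proof of Lemma~\ref{lem:reduct}, but now with $\MT_1$ replaced by $\MT_\Omega$: since $\MT_\Omega$ is trivially operationally rich at every compatible relation, the argument of Lemma~\ref{lem:reduct} applies verbatim (with $\MT_2$ there replaced by $\MT_\Omega$) to give the converse implication $\forall \vec v\,[r(\vec v)\to\beta_r(\vec v)]$ as a consequence of uH-sentences true in~$\MT_\Omega$, while the forward implication is a uH-sentence directly. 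One should note that Lemma~\ref{lem:entails} and the $\widehat\beta_r$ defining~$\widehat r$ make sense with any alter ego satisfying \Ahom{} in place of $\MT_2$, and $\MT_\Omega$ certainly satisfies \Ahom; so there is no circularity. Everything else is bookkeeping about the signatures and the fact that primitive-positive (indeed first-order) formulae in a sublanguage are insensitive to forgetting symbols outside that sublanguage.
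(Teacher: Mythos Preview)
Your proposal is correct and follows essentially the same route as the paper: part~(1) is Lemma~\ref{lem:reduct} directly, and for part~(2) both you and the paper rerun the argument of Lemma~\ref{lem:reduct} with $\MT_\Omega$ in the role of~$\MT_2$ (which is legitimate since $\MT_\Omega$ trivially satisfies \Ahom\ and \Aop), keeping the same formul\ae~$\beta_r$. The paper packages this a touch more cleanly by naming the resulting functor $S_\Omega$, observing that $S_\Omega = S_2 \circ F_2$ because the $\beta_r$ lie in the smaller language, and then reading off $S_\Omega = \mathrm{id}_{\CY_\Omega}$ from Lemma~\ref{lem:reduct}; your ``obstacle'' paragraph is exactly this argument unpacked, and your earlier aside that the biconditional is ``equivalent to a conjunction of uH-sentences'' should be dropped in favour of that paragraph, since only the forward implication is a uH-sentence while the backward one needs the operational-richness step you correctly supply.
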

\begin{proof}
Part~(1) follows directly from the previous lemma. To prove part~(2),
first note that the top alter ego $\MT_\Omega$ satisfies the
assumptions~\ref{su:init}{\Ahom} and \ref{su:init}{\Aop}
(i.e., with $\MT_\Omega$ replacing $\MT_2$).
Consider the functor $S_\Omega \colon \CY_\Omega \to \CZ_\Omega$
obtained by applying Definition~\ref{def:sfunctor} with $\MT_\Omega$ as~$\MT_2$,
but using the same formul\ae\ $\beta_r$ as for~$\MT_2$.
Then $S_\Omega = S_2  F_2$. The previous lemma with $\MT_\Omega$ as $\MT_2$
yields $S_\Omega = \id_{\CY_\Omega}$.
\end{proof}

\section{Proof of the New-from-old Theorem: The transfer functor}\label{sec:trans}

Throughout this section, the assumptions~\ref{su:init}{\Ahom}
and~\ref{su:init}{\Aop} remain in force.
In the previous section, we defined the sharp functor $S_2
\colon \CY_2 \to \CZ_\Omega$. In this section, we aim to show
that the transfer functor
\[
T_{21} := F_1  S_2 \colon \CY_2 \to \CY_1
\]
is well defined. As in the motivating example from
Section~\ref{sec:three}, we will use a basis
$\Sigma_1$ for the uH-theory of~$\MT_1$. By
Lemma~\ref{lem:pure}, we can assume that all the sentences in
$\Sigma_1$ are pure. We will need to strengthen our assumptions
on~$\MT_2$, but to do this we require some definitions.

\begin{definition}\label{defn:nat}
Let $\phi = \forall \vec v \,\bigl[ \bigl(\bigand_{i=1}^\nu
\alpha_i(\vec v) \bigr) \to \gamma(\vec v)\bigr]$ be a pure
uH-sentence in the language of~$\MT_\Omega$. Define
$\phi^\natural$ to be the sentence in the language of~$\MT_2$ constructed
from $\phi$ as follows.
\begin{enumerate}
\item
First, simultaneously make the following replacements:
\begin{enumerate}
 \item replace each $r(v_{i_1}, \dotsc, v_{i_n})$ in $\phi$
     with $\beta_r(v_{i_1}, \dotsc, v_{i_n})$;
 \item replace each $h(v_{i_1}, \dotsc, v_{i_n}) = v_{i_0}$
     in $\phi$ with $\beta_{\graph h}(v_{i_1}, \dotsc,
     v_{i_n},v_{i_0})$;
 \item if the conclusion $\gamma(\vec v)$ is $h(v_{i_1},
     \dotsc, v_{i_n}) = h(v_{i_1}, \dotsc, v_{i_n})$,
     replace it with $\beta_{\dom h}(v_{i_1}, \dotsc,
     v_{i_n})$.
\end{enumerate}
\item
Then convert the new existential quantifiers in the
premise into universal quantifiers out the front.
\end{enumerate}
Let the new sentence so constructed be
\[
\phi^\natural = \forall \vec v \vec w_1 \dots \vec w_\nu \,\Bigl[
\Bigl( \bigand_{i=1}^\nu \alpha_i^\natural(\vec v,\vec w_i) \Bigr)
\to \gamma^\natural(\vec v)\Bigr].
\]
Note that each $\alpha_i^\natural$ in the premise is of the
form $\widehat\beta_r(v_{i_1}, \dotsc, v_{i_n}, \vec w_i)$, for
some $r \in R_\Omega$, and therefore is a conjunct-atomic
formula in the language of~$\MT_2$. The conclusion
$\gamma^\natural$ is either a primitive-positive formula in the
language of~$\MT_2$ or else~$\bot$.
\end{definition}

\begin{lemma}\label{lem:nat}
Let\/ $\X \in \CY_2$ and let\/ $\phi$ be a pure uH-sentence in
the language of\/~$\MT_\Omega$.  Then\/ $S_2(\X) \models \phi$
if and only if\/ $\X \models \phi^\natural$.
\end{lemma}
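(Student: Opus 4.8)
The plan is to unpack both sides of the claimed equivalence directly from the definitions of $S_2$ and of $\phi^\natural$, and to observe that they literally assert the same thing. Write $\phi = \forall \vec v \,\bigl[ \bigl(\bigand_{i=1}^\nu \alpha_i(\vec v) \bigr) \to \gamma(\vec v)\bigr]$, a pure uH-sentence in the language of~$\MT_\Omega$. First I would note that, because $\phi$ is pure, each premise atom $\alpha_i$ is either $r(\vec v\,')$ with $r \in R_\Omega$ or $h(\vec v\,') = v_{i_0}$ with $h \in H_\Omega$, and in either case the set of tuples $\vec a$ satisfying $\alpha_i$ in $S_2(\X)$ is, by the construction of $S_2$ in Definition~\ref{def:sfunctor}, exactly the set of tuples satisfying the primitive-positive formula $\beta_r(\vec v\,')$ (respectively $\beta_{\graph h}(\vec v\,', v_{i_0})$) in~$\X$. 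Thus $S_2(\X) \models \bigand_i \alpha_i(\vec a)$ holds iff $\X$ satisfies the conjunction of these $\beta$-formulae at $\vec a$, which — after pulling the existential quantifiers $\exists \vec w_i$ out front and renaming — is exactly the premise $\bigand_i \alpha_i^\natural(\vec v, \vec w_i)$ of~$\phi^\natural$ witnessed by suitable $\vec w_i$.

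Next I would treat the conclusion $\gamma$ by the same bookkeeping, using the three cases in Definition~\ref{def:pure}(2) of conclusive purity. If $\gamma$ is $\bot$, then $\gamma^\natural$ is $\bot$ and there is nothing to translate. If $\gamma$ is $r(\vec v\,')$ or $u = v$, then the satisfaction of $\gamma$ in $S_2(\X)$ is again governed by the corresponding $\beta$-formula in~$\X$ (with $\beta_{=}$ being literal equality, so the $u=v$ case is transparent). The one case needing a word of care is $\gamma = h(\vec v\,') = h(\vec v\,')$: here $S_2(\X) \models \gamma(\vec a)$ means $\vec a \in \dom{h^{S_2(\X)}}$, and by Lemma~\ref{lem:dom} this equals $\dom h^{S_2(\X)}$, which by construction is the relation defined in~$\X$ by $\beta_{\dom h}(\vec v\,')$ — matching the replacement prescribed in Definition~\ref{defn:nat}(1)(c), so $\gamma^\natural = \beta_{\dom h}(\vec v\,')$ is indeed what is required.

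Combining the premise and conclusion translations, $S_2(\X) \models \phi$ says: for every $\vec a$, if $\X$ satisfies all the $\beta$-premises (equivalently, if there exist $\vec w_i$ making the $\widehat\beta$-premises true) then $\X$ satisfies the $\beta$-conclusion; and this is verbatim $\X \models \phi^\natural$, since $\phi^\natural$ quantifies universally over $\vec v$ together with the witness tuples $\vec w_1, \dots, \vec w_\nu$. I would also invoke Lemma~\ref{2.4.5} and Lemma~\ref{lem:dom} only insofar as they guarantee that $S_2(\X)$ is a legitimate Boolean structure so that ``$S_2(\X) \models \phi$'' even makes sense — the logical equivalence itself is a pure translation argument. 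The main (and really only) obstacle is the quantifier manoeuvre in step~(2) of Definition~\ref{defn:nat}: one must check that converting the $\exists \vec w_i$ appearing inside the premise into universal quantifiers $\forall \vec w_i$ out front is a logical equivalence, which it is precisely because those variables occur only positively in the premise of an implication (so $\exists$ in the antecedent becomes $\forall$ over the whole sentence). Once that is spelled out, the proof is a routine unwinding, and I would present it as such rather than belabouring each atomic case.
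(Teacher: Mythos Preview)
Your proposal is correct and follows essentially the same approach as the paper: both treat the equivalence as a direct unwinding of the definitions of $S_2(\X)$ and $\phi^\natural$, with the only genuine subtlety being the conclusion case $h(\vec v') = h(\vec v')$, which requires Lemma~\ref{lem:dom} to identify $\dom{h^{S_2(\X)}}$ with the relation defined by $\beta_{\dom h}$ in~$\X$. The paper's proof is terser (it simply says the result ``follows from the definitions'' and isolates the one complication), whereas you spell out the premise translation and the quantifier manoeuvre explicitly, but the substance is the same.
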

\begin{proof}
This follows from the definitions of $S_2(\X)$
and~$\phi^\natural$. The only complication is
replacement~\ref{defn:nat}(1)(c).
But Lemma~\ref{lem:dom} tells us
that, for all $h \in H_\Omega$, we have
\begin{align*}
S_2(\X) \models h(\vec a) = h(\vec a)
 &\iff
\vec a \in \dom {h^{S_2(\X)}}
 \iff
\vec a \in \dom h^{S_2(\X)} \\
 &\iff
\X \models \beta_{\dom h} (\vec a).
\end{align*}
So the result holds.
\end{proof}

Recall that the notation $\rel \X \sigma$ was introduced in~\ref{not:rel}.

\begin{lemma}\label{lem:new}
Let\/ $\X \in \CY_2$ and let\/ $\phi$ be a pure uH-sentence such
that\/ $\MT_\Omega \models \phi$. Then $S_2(\X) \models \phi$
provided either
\begin{enumerate}
 \item the conclusion of\/ $\phi$ is in the language of\/~$\MT_2$\up, or
 \item $\MT_2$ is operationally rich at the relation $\rel {\MT_2} {\phi^\natural}$.
\end{enumerate}
\end{lemma}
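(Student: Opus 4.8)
The plan is to reduce the claim, via the already-proved Lemma~\ref{lem:nat}, to a statement about the class~$\CY_2$. Since Lemma~\ref{lem:nat} says $S_2(\X) \models \phi$ if and only if $\X \models \phi^\natural$, it suffices to prove $\X \models \phi^\natural$. Applying that lemma with~$\MT_2$ in place of~$\X$, and using $S_2(\MT_2) = \MT_\Omega$ (Note~\ref{not:sharp}) together with the hypothesis $\MT_\Omega \models \phi$, we get $\MT_2 \models \phi^\natural$. Since $\CY_2$ is the class of all Boolean models of $\ThuH{\MT_2}$, any sentence logically equivalent to a conjunction of uH-sentences that holds in~$\MT_2$ also holds in~$\X$; so the task is to show that, over~$\CY_2$, the sentence $\phi^\natural$ is equivalent to such a conjunction. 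The only obstruction is that, while the premise $\bigand_i \alpha_i^\natural$ of~$\phi^\natural$ is conjunct-atomic in the language of~$\MT_2$, the conclusion $\gamma^\natural$ is merely primitive-positive, so its existential quantifiers must be removed.

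First I would dispose of the easy conclusions: if $\gamma$, and hence $\gamma^\natural$, is $\bot$ or an equation between variables, then $\phi^\natural$ is already a uH-sentence true in~$\MT_2$, so it holds in~$\X$ and, by Lemma~\ref{lem:nat}, $S_2(\X) \models \phi$. Otherwise $\gamma$ is $r(\vec v)$ with $r \in R_\Omega$ or $h(\vec v) = h(\vec v)$ with $h \in H_\Omega$; put $\rho := r$ in the first case and $\rho := \dom h$ in the second, let~$\boldsymbol\rho$ denote the corresponding subalgebra of a power of~$\MB$, and fix the enumeration $f_1, \dotsc, f_m$ of $\CA(\boldsymbol\rho, \MB)$ used to form~$\widehat\beta_\rho$, so that $\gamma^\natural = \beta_\rho(\vec v) = \exists \vec w\, \widehat\beta_\rho(\vec v, \vec w)$ with~$\vec w$ of length~$m$. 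Under hypothesis~(1) the symbol~$r$ lies in~$R_2$, respectively~$h$ lies in~$H_2$, and then Lemma~\ref{lem:reduct} --- supplemented by Lemma~\ref{lem:dom} in the partial-operation case --- shows that both in~$\MT_2$ and in every member of~$\CY_2$ the formula $\beta_\rho(\vec v)$ defines the same relation as the conclusively pure formula $r(\vec v)$, respectively $h(\vec v) = h(\vec v)$. Replacing $\gamma^\natural$ by this conclusively pure formula turns~$\phi^\natural$ into a genuine uH-sentence, equivalent to~$\phi^\natural$ over~$\CY_2$ and true in~$\MT_2$, hence true in~$\X$; Lemma~\ref{lem:nat} then gives $S_2(\X) \models \phi$.

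Under hypothesis~(2) the plan is instead to Skolemise the existential quantifiers of~$\gamma^\natural$ using operational richness, in the spirit of the way the partial operation~$h$ was used in Section~\ref{sec:three}. Set $P := \rel{\MT_2}{\phi^\natural}$ and write~$\mathbf P$ for the subalgebra, with universe~$P$, of the relevant power of~$\MB$; since the premise of~$\phi^\natural$ is conjunct-atomic in the language of~$\MT_2$, the relation~$P$ is compatible with~$\MB$. Because $\beta_\rho(\vec v)$ defines~$\rho$ in~$\MT_2$ (Lemma~\ref{lem:entails}), the truth of~$\phi^\natural$ in~$\MT_2$ says precisely that the coordinate projection of~$P$ onto its $\vec v$-block lands inside~$\rho$; let $\pi \colon \mathbf P \to \boldsymbol\rho$ be the resulting homomorphism. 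Then each composite $f_j \circ \pi \colon \mathbf P \to \MB$ is a compatible partial operation with domain~$P$, so hypothesis~(2) supplies a term~$t_j$ in the language of~$\MT_2$ such that the term operation $t_j^{\MT_2}$ extends $f_j \circ \pi$. The sentence obtained from~$\phi^\natural$ by substituting $\widehat\beta_\rho\bigl(\vec v, t_1(\vec v, \vec w_1, \dotsc, \vec w_\nu), \dotsc, t_m(\vec v, \vec w_1, \dotsc, \vec w_\nu)\bigr)$ for its conclusion then holds in~$\MT_2$ --- on each tuple $\vec p \in P$ the terms evaluate to the values $f_j(\pi(\vec p))$, and $\bigl(\pi(\vec p), f_1(\pi(\vec p)), \dotsc, f_m(\pi(\vec p))\bigr) \in \widehat\rho$ --- and, by Lemma~\ref{lem:pure}, it is logically equivalent to a conjunction of uH-sentences, so it holds throughout~$\CY_2$. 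Since this sentence logically implies~$\phi^\natural$ (its conclusion exhibits witnesses for the existential quantifiers of~$\gamma^\natural$), we obtain $\X \models \phi^\natural$, and Lemma~\ref{lem:nat} completes the proof.

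The step I expect to require the most care is the hypothesis~(2) argument: one must check that $\MT_2 \models \phi^\natural$ is exactly what makes the projection $\pi \colon \mathbf P \to \boldsymbol\rho$ well defined, that each $f_j \circ \pi$ genuinely has domain~$P$ so that operational richness of~$\MT_2$ at~$P$ applies, and that substituting the Skolem terms into the conjunct-atomic formula~$\widehat\beta_\rho$ still produces a sentence that converts, via Lemma~\ref{lem:pure}, to a conjunction of uH-sentences. Keeping $\dom h$ and $\graph h$ straight throughout --- using Lemmas~\ref{lem:dom} and~\ref{lem:reduct}, and handling the $\gamma = h(\vec v) = h(\vec v)$ case of replacement~\ref{defn:nat}(1c) correctly --- is the other place where a slip is easy.
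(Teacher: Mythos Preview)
Your proposal is correct and follows essentially the same argument as the paper: the same reduction via Lemma~\ref{lem:nat} and Note~\ref{not:sharp} to showing $\X \models \phi^\natural$, the same disposal of the trivial conclusions, the same swap of $\gamma^\natural$ back to the atomic $\gamma$ (using Lemmas~\ref{lem:dom} and~\ref{lem:reduct}) in case~(1), and the same Skolemisation via operational richness at $\rel{\MT_2}{\phi^\natural}$ in case~(2). The only differences are notational (your $\rho$, $\mathbf P$, $\pi$ versus the paper's $r$, $\pb$, and an explicitly displayed $g_j$).
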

\begin{proof}
Assume that (1) or (2) holds. We want to show that $S_2(\X)
\models \phi$. By Lemma~\ref{lem:nat}, it suffices to show that
$\X \models \phi^\natural$. Using Note~\ref{not:sharp}, we have
$S_2(\MT_2) = \MT_\Omega \models \phi$. So it follows by
Lemma~\ref{lem:nat} that $\MT_2 \models \phi^\natural$.

Say that $\phi = \forall \vec v \,\bigl[ \bigl(\bigand_{i=1}^\nu
\alpha_i(\vec v)\bigr) \to \gamma(\vec v)\bigr]$ and let
$\gamma^\natural(\vec v)$ be the conclusion of $\phi^\natural$;
see Definition~\ref{defn:nat}. Since the uH-sentence $\phi$ is
pure, we know that its conclusion $\gamma(\vec v)$ must take
one of the following four forms:
\begin{enumerate}
 \item[(a)] $r(v_{i_1},\dotsc,v_{i_n})$, for some $r \in
     R_\Omega$, in which case $\gamma^\natural(\vec v)$ is
     $\beta_r(v_{i_1},\dotsc,v_{i_n})$;
 \item[(b)] $h(v_{i_1},\dotsc,v_{i_n}) =
     h(v_{i_1},\dotsc,v_{i_n})$, for some $h \in
     H_\Omega$, in which case $\gamma^\natural(\vec v)$ is
$\beta_r(v_{i_1},\dotsc,v_{i_n})$, where $r := \dom h$;
 \item[(c)] $v_{i_1} = v_{i_2}$, in which case
     $\gamma^\natural(\vec v)$ is also $v_{i_1} = v_{i_2}$;
 \item[(d)] $\bot$, in which case $\gamma^\natural(\vec v)$ is also $\bot$.
\end{enumerate}
If $\gamma(\vec v)$ is of type~(c) or~(d), then $\phi^\natural$
is a uH-sentence true in $\MT_2$ and thus in~$\X$. So we can
now assume that $\gamma(\vec v)$ is of type~(a) or~(b).

\smallskip

\emph{Case $(1)$\up: the conclusion of\/ $\phi$ is in the
language of\/~$\MT_2$.} We can construct a uH-sentence $\psi$
in the language of $\MT_2$ from $\phi^\natural$ by changing the
conclusion $\gamma^\natural(\vec v)$ back to $\gamma(\vec v)$.
The conclusion $\gamma^\natural(\vec v)$ is $\beta_r(v_{i_1},
\dotsc, v_{i_n})$, for some $r \in R_2 \cup \dom {H_2}$. We
know that $\beta_r$ defines the interpretation of $r$ in
$\MT_2$ (by Lemma~\ref{lem:entails}) and also in~$\X$ (by
Lemmas~\ref{lem:dom} and~\ref{lem:reduct}).  Thus
$\phi^\natural \leftrightarrow \psi$ is true in both $\MT_2$
and~$\X$. Since $\MT_2 \models \phi^\natural$ and $\psi$ is a
uH-sentence, it follows that $\X \models \phi^\natural$.

\smallskip

\emph{Case $(2)$\up: $\MT_2$ is operationally rich at the
relation $\rel {\MT_2} {\phi^\natural}$.} To show that
$\X \models \phi^\natural$, it is enough to find a set $\Sigma$ of
uH-sentences in the language of $\MT_2$ such that
$\MT_2 \models \Sigma$ and $\Sigma \vdash \phi^\natural$.

The conclusion $\gamma^\natural(\vec v)$ is $\beta_r(v_{i_1},
\dotsc, v_{i_n})$, for some $r \in R_\Omega \cup \dom
{H_\Omega}$.  Define the compatible relation $p := \rel
{\MT_2} {\phi^\natural}$ on~$\MB$. Since $\MT_2 \models
\phi^\natural$ and since $\beta_r$ defines $r$ in~$\MT_2$ (by
Lemma~\ref{lem:entails}), we have
\begin{align*}
(\vec a,\vec c_1,\dotsc,\vec c_\nu) \in p
 \iff{}&
\MT_2 \models \textstyle{\bigand_{i=1}^\nu} \,
\alpha_i^\natural(\vec a,\vec c_i)
 \implies
\MT_2 \models \gamma^\natural(\vec a) \\
 \implies{}&
\MT_2 \models \beta_r(a_{i_1},\dotsc, a_{i_n})
 \implies
(a_{i_1},\dotsc,a_{i_n}) \in r.
\end{align*}
Let $f_1,\dotsc,f_m$ be the fixed enumeration of $\rhom\CA(\rb,\MB)$
used in Definition~\ref{defn:betas_pt1}. Then, for all $j \in
\{1,\dotsc,m\}$, we can define $g_j \colon \pb \to \MB$ by
\[
g_j(\vec a,\vec c_1,\dotsc,\vec c_\nu) := f_j(a_{i_1},\dotsc,a_{i_n}).
\]
Each $g_j$ is a compatible partial operation on $\MB$ with
domain~$p$. We are assuming that $\MT_2$ is operationally rich
at $\rel {\MT_2} {\phi^\natural} = p$. Thus there are terms
$t_1, \dotsc, t_m$ in the language of~$\MT_2$ that define
extensions of $g_1,\dots,g_m$ in~$\MT_2$. Define the sentence
\begin{multline*}
\psi := \forall \vec v \vec w_1 \dots \vec w_\nu \, \bigl[
 \bigl( \bigand_{i=1}^\nu \alpha_i^\natural(\vec v,\vec w_i) \bigr) \to \\
 \widehat \beta_r\bigl(v_{i_1}, \dotsc, v_{i_n},t_1(\vec v,\vec
w_1,\dotsc,\vec w_\nu),\dotsc,t_m(\vec v,\vec w_1,\dotsc,\vec
w_\nu)\bigr) \bigr].
\end{multline*}
Then $\psi$ is equivalent to a conjunction of uH-sentences in
the language of~$\MT_2$, with $\MT_2 \models \psi$ and $\psi
\vdash \phi^\natural$. Hence it follows that $\X \models
\phi^\natural$, as required.
\end{proof}

The next lemma will be used later to simplify the checking of
condition~\ref{lem:new}(2).

\begin{lemma}\label{lem:q}
Let\/ $\phi$ be a pure uH-sentence in the language of\/
$\MT_\Omega$\up, and define the sentence $\phi^\natural$ as
in~\ref{defn:nat}. If\/ $\MT_2$ is operationally rich at the
relation\/ $\rel {\MT_\Omega} \phi$\up, then\/ $\MT_2$ is also
operationally rich at\/ $\rel {\MT_2} {\phi^\natural}$.
\end{lemma}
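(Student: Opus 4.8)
The plan is to compare the two domains $\rel{\MT_\Omega}\phi$ and $\rel{\MT_2}{\phi^\natural}$ and show that the former is, up to a bijective projection, a retract-like image of the latter, so that Lemma~\ref{lem:graph-dom} applies. Write $\phi = \forall \vec v\,\bigl[\bigl(\bigand_{i=1}^\nu \alpha_i(\vec v)\bigr) \to \gamma(\vec v)\bigr]$, so that the premise of $\phi$ defines the relation $q := \rel{\MT_\Omega}\phi$ on the variables $\vec v$. By Definition~\ref{defn:nat}, $\phi^\natural$ has premise $\bigand_{i=1}^\nu \alpha_i^\natural(\vec v, \vec w_i)$, where each $\alpha_i^\natural$ is $\widehat\beta_{r_i}(v_{i_1},\dotsc,v_{i_n},\vec w_i)$ for the relation symbol $r_i$ (in $R_\Omega$, or $\graph{h_i}$) occurring in $\alpha_i$. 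So $p := \rel{\MT_2}{\phi^\natural}$ lives on the longer tuple of variables $\vec v \vec w_1 \dots \vec w_\nu$.

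The key observation is that the projection $\rho \colon \pb \to \qb$ that deletes all the $\vec w_i$-coordinates is a \emph{bijection}. Indeed, a tuple $\vec a$ lies in $q$ iff $\MT_\Omega \models \bigand_i \alpha_i(\vec a)$, iff (using Lemma~\ref{lem:entails} and $S_2(\MT_2)=\MT_\Omega$, Note~\ref{not:sharp}) $\MT_2 \models \bigand_i \beta_{r_i}(a_{i_1},\dotsc,a_{i_n})$, i.e.\ for each $i$ there \emph{exists} a witnessing tuple $\vec c_i$ with $\MT_2 \models \widehat\beta_{r_i}(a_{i_1},\dotsc,a_{i_n},\vec c_i)$. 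The point is that this witness is \emph{unique}: the formula $\widehat\beta_{r_i}$ defines $\widehat{r_i}$ in $\MT_2$ (Definition~\ref{defn:betas_pt2}(a)), and $\widehat{r_i}$ is the graph of the function $\vec a \mapsto (f_1(\vec a),\dotsc,f_m(\vec a))$ on $r_i$ (Definition~\ref{defn:betas_pt1}), so the $\vec w_i$-part of any element of $p$ is completely determined by its $\vec v$-part. Hence $\rho$ is a bijective projection from $p$ onto $q$ (with $\theta$ the inclusion of the $\vec v$-coordinate positions into the full coordinate set of $\phi^\natural$); one should check that $\rho$ is an algebra isomorphism $\pb \to \qb$, which follows because $p$ and $q$ are subalgebras of powers of $\MB$, the $\vec w_i$-coordinates of a point of $p$ are term functions (via $f_j$, which are homomorphisms) of its $\vec v$-coordinates, and $\rho$ is a bijective coordinate-projection.

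With $\rho \colon \pb \to \qb$ established as a bijective projection, the conclusion is immediate: we are given that $\MT_2$ is operationally rich at $q = \rel{\MT_\Omega}\phi$, so by Lemma~\ref{lem:graph-dom} (applied with $r = q$, $s = p$, and the bijective projection $\rho$) $\MT_2$ is operationally rich at $p = \rel{\MT_2}{\phi^\natural}$, as required. The one point needing care — and the main obstacle — is the verification that the witnesses $\vec c_i$ are genuinely unique, i.e.\ that $\widehat\beta_{r_i}$ really does pin down the $\vec w_i$-coordinates; this is where the hom-minimality of $\widehat{r_i}$ (built into Definition~\ref{defn:betas_pt1} so that $\widehat{r_i}$ is the graph of the full tuple of homomorphisms out of $r_i$) does the work, together with Lemma~\ref{lem:entails}'s guarantee that $\beta_{r_i}$ — hence its witnessing formula $\widehat\beta_{r_i}$ — behaves correctly in $\MT_2$. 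Everything else is routine bookkeeping about which variables get projected away.
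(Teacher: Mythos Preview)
Your proposal is correct and follows essentially the same approach as the paper: both show that the projection $\rho\colon \rel{\MT_2}{\phi^\natural} \to \rel{\MT_\Omega}{\phi}$ deleting the $\vec w_i$-coordinates is a bijective projection, then invoke Lemma~\ref{lem:graph-dom}. Two small remarks: the check that $\rho$ is an algebra isomorphism is unnecessary, since Lemma~\ref{lem:graph-dom} requires only a bijective projection in the set-theoretic sense of the paper's definition; and the uniqueness of the witnesses $\vec c_i$ comes directly from $\widehat{r_i}$ being the graph of a function (Definition~\ref{defn:betas_pt1}) together with $\widehat\beta_{r_i}$ defining $\widehat{r_i}$ in~$\MT_2$ (Definition~\ref{defn:betas_pt2}(a))---hom-minimality is not what drives this step.
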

\begin{proof}
By Lemma~\ref{lem:graph-dom}, it is enough to show that $\rel
{\MT_\Omega} \phi$ is a bijective projection of $\rel
{\MT_2} {\phi^\natural}$. Referring to the notation of
Definition~\ref{defn:nat}, first note that each
$\alpha^\natural_j(\vec v,\vec w_j)$ in the premise of $\phi^\natural$
is of the form $\widehat \beta_{r_j} (\vec v_j,\vec w_j)$,
for some compatible relation
$r_j$ on~$\MB$, some tuple $\vec v_j = (v_{i_{j,1}},\dotsc,
v_{i_{j,n_j}})$ of variables from $\vec v$, and some tuple
of new variables $\vec w_j$ of length $|\rhom\CA(\rb_j,\MB)|$. Let
$\vec f_j$ be the fixed enumeration of $\rhom\CA(\rb_j,\MB)$ used in
Definition~\ref{defn:betas_pt1}. Then
\begin{align*}
  (\vec a, \vec c_1, \dotsc, \vec c_\nu) \in \rel{\MT_2}{\phi^\natural}
 &\iff
\MT_2 \models \bigand_{j=1}^\nu \alpha_j^\natural(\vec a,\vec c_j) \\
 &\iff
\MT_2 \models \bigand_{j=1}^\nu
\widehat \beta_{r_j} (\vec a_j,\vec c_j) \\
 &\iff
  \bigl(\forall j \in \{1,\dotsc,\nu\}\bigr)\
   \bigl( \vec a_j \in r_j  \And  \vec c_j = \vec f_j(\vec a_j)
   \bigr) \\
 &\iff
  \vec a \in \rel{\MT_\Omega}{\phi} \And
  \bigl(\forall j \in \{1,\dotsc,\nu\}\bigr)\
    \vec c_j = \vec f_j(\vec a_j).
\end{align*}
It now follows that $\rho \colon \rel{\MT_2}{\phi^\natural}
\to \rel{\MT_\Omega}{\phi}$, given by $(\vec a,\vec
c_1,\ldots,\vec c_\nu) \mapsto \vec a$, is a bijective
projection.
\end{proof}

We now add to our initial assumptions,~\ref{su:init}, in order
to ensure that the transfer functor $T_{21} := F_1  S_2
\colon \CY_2 \to \CY_1$ is well defined.

\begin{assumptions}\label{su:more}
Choose a basis $\Sigma_1$ for the universal Horn theory of~$\MT_1$
such that each uH-sentence in $\Sigma_1$ is pure. Assume that
\begin{enumerate}
 \item[\Aaxi] for each $\phi \in \Sigma_1$, if the
     conclusion of $\phi$ is not in the language
     of~$\MT_2$, then $\MT_2$ is operationally rich at the
     relation $\rel {\MT_2}{\phi^\natural}$.
\end{enumerate}
\end{assumptions}

\begin{note}\label{not:axi}
Each relation $\rel {\MT_2}{\phi^\natural}$
is conjunct-atomic definable from hom-minimal relations
on~$\MB$; see Definition~\ref{defn:nat}. So
assumption~\ref{su:more}{\Aaxi} is necessary for $\MT_2$
to yield a finite-level full duality, by the Full
Duality Lemma~\ref{lem:fdual}.
\end{note}

\begin{lemma}\label{lem:transf}
The transfer functor $T_{21} := F_1  S_2 \colon \CY_2 \to \CY_1$
is well defined. That is\up, if\/ $\X \in \CY_2$\up, then\/ $F_1  S_2(\X) \in \CY_1$.
\end{lemma}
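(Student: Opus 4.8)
The goal is to show that for $\X \in \CY_2$, the structure $F_1 \circ S_2(\X)$, which is a Boolean structure of signature $(H_1, R_1)$, satisfies the universal Horn theory of $\MT_1$. Since $\Sigma_1$ is a basis for $\ThuH{\MT_1}$ consisting of pure uH-sentences, it suffices to check $F_1 \circ S_2(\X) \models \phi$ for each $\phi \in \Sigma_1$.

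The plan is as follows. Fix $\phi \in \Sigma_1$. The key observation is that, since $\phi$ is in the language of $\MT_1$, and $F_1$ is the forgetful functor $\CZ_\Omega \to \CZ_1$, we have $F_1 \circ S_2(\X) \models \phi$ if and only if $S_2(\X) \models \phi$ (as a sentence in the language of $\MT_\Omega$, $\phi$ only involves symbols of $\MT_1 \subseteq \MT_\Omega$, so satisfaction is unaffected by forgetting down to $\MT_1$). So it is enough to show $S_2(\X) \models \phi$. Now $\phi$ is a pure uH-sentence, and $\MT_1 \models \phi$ (since $\phi \in \Sigma_1 \subseteq \ThuH{\MT_1}$), hence $\MT_\Omega \models \phi$ because $\MT_\Omega$ is a structural expansion of $\MT_1$ — more precisely, $\MT_1$ is a structural reduct of $\MT_\Omega$, and $\MT_1$'s symbols have their same interpretations in $\MT_\Omega$, so any sentence in the language of $\MT_1$ that holds in $\MT_1$ holds in $\MT_\Omega$. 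Thus $\phi$ is a pure uH-sentence with $\MT_\Omega \models \phi$, and we are precisely in the situation of Lemma~\ref{lem:new}: we may conclude $S_2(\X) \models \phi$ provided that either the conclusion of $\phi$ is in the language of $\MT_2$, or $\MT_2$ is operationally rich at the relation $\rel{\MT_2}{\phi^\natural}$.

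This is exactly what assumption~\ref{su:more}{\Aaxi} provides: for each $\phi \in \Sigma_1$, if the conclusion of $\phi$ is not in the language of $\MT_2$, then $\MT_2$ is operationally rich at $\rel{\MT_2}{\phi^\natural}$. So in the first case Lemma~\ref{lem:new}(1) applies, and in the second case Lemma~\ref{lem:new}(2) applies; either way, $S_2(\X) \models \phi$. Hence $F_1 \circ S_2(\X) \models \phi$ for all $\phi \in \Sigma_1$, and since $\Sigma_1$ axiomatises $\ThuH{\MT_1}$, we get $F_1 \circ S_2(\X) \models \ThuH{\MT_1}$. Finally, $S_2(\X) \in \CZ_\Omega$ by Definition~\ref{def:sfunctor} (it is a Boolean structure), so $F_1 \circ S_2(\X) \in \CZ_1$, and combined with satisfaction of the uH-theory we conclude $F_1 \circ S_2(\X) \in \CY_1$, as required.

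I expect the main subtlety to be the bookkeeping around the language-forgetting step: one must be careful that $\phi$, which is literally a sentence in the language of $\MT_1$, can legitimately be regarded as a pure uH-sentence in the language of $\MT_\Omega$ (fine, since $H_1 \subseteq H_\Omega$ and $R_1 \subseteq R_\Omega$), that its interpretation is unchanged when passing between $\MT_1$, $\MT_\Omega$, and when forgetting $S_2(\X)$ down to $F_1 \circ S_2(\X)$, and that purity is preserved under these identifications (it is, since purity is a purely syntactic property of the sentence). Once this is set up cleanly, the argument is a direct application of Lemma~\ref{lem:new} with the case split handed to us verbatim by assumption~\ref{su:more}{\Aaxi}, so there is no real obstacle beyond this routine translation. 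I would also remark at the end that assumption~\ref{su:more}{\Aaxi} is used only here, and that Note~\ref{not:axi} shows it is necessary for a finite-level full duality, so nothing is lost.
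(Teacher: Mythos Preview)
Your proof is correct and follows essentially the same approach as the paper's: fix $\phi \in \Sigma_1$, use assumption~\ref{su:more}{\Aaxi} together with Lemma~\ref{lem:new} to obtain $S_2(\X) \models \phi$, and then pass down to $F_1S_2(\X)$. The paper's proof is a terse three-line version of exactly this argument; your additional bookkeeping about why $\MT_\Omega \models \phi$ and why satisfaction is preserved under the forgetful functor is sound and merely makes explicit what the paper takes for granted.
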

\begin{proof}
Let $\X \in \CY_2$ and $\phi \in \Sigma_1$. Then $S_2(\X) \models \phi$,
using~\ref{su:more}{\Aaxi} and Lemma~\ref{lem:new}.
So $F_1S_2(\X) \models \phi$. It follows that $F_1S_2(\X) \models
\Sigma_1$ and therefore $F_1S_2(\X) \in \CY_1$.
\end{proof}

\begin{remark}\label{rem:symm}
Suppose that, in addition to our assumptions~\ref{su:init} and~\ref{su:more}
on~$\MT_2$, we assume that $\MT_1$~fully dualises~$\MB$ at the finite level.
Then $\MT_1$ also satisfies conditions~\ref{su:init}{\Ahom} and~\ref{su:init}{\Aop}
(i.e., with $\MT_1$ replacing $\MT_2$) by the Full Duality Lemma~\ref{lem:fdual}.
This means that we can use the method of Section~\ref{sec:sharp}
to define a sharp functor $S_1 \colon \CY_1 \to \CZ_\Omega$ based on~$\MT_1$.
As in Definition~\ref{defn:betas_pt2}, for each $r \in R_\Omega$,
we will need to choose some conjunct-atomic formula $\widehat\delta_r(\vec v,\vec w)$
in the language of $\MT_1$ that defines $\widehat r$ in~$\MT_1$, and this
formula may well be different from the one $\widehat\beta_r(\vec v,\vec w)$ chosen
for~$\MT_2$. The alter ego $\MT_1$ also satisfies condition~\ref{su:more}{\Aaxi},
for any pure basis $\Sigma_2$ for~$\ThuH{\MT_2}$, by Note~\ref{not:axi}.
So we can follow the method of this section to establish that the transfer functor
$T_{12} := F_2  S_1 \colon \CY_1 \to \CY_2$ is well defined; see Lemma~\ref{lem:transf}.
\end{remark}

\begin{lemma}\label{lem:inv}
Assume that\/ $\MT_1$ fully dualises $\MB$ at the finite level.
Then the two transfer functors\/ $T_{12} := F_2  S_1
\colon \CY_1 \to \CY_2$ and\/ $T_{21} := F_1  S_2 \colon
\CY_2 \to \CY_1$ are mutually inverse category isomorphisms.
\end{lemma}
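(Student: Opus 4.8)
The statement asserts that $T_{12} \colon \CY_1 \to \CY_2$ and $T_{21} \colon \CY_2 \to \CY_1$ are mutually inverse isomorphisms of categories. The natural approach is to unwind the definitions $T_{21} = F_1 \circ S_2$ and $T_{12} = F_2 \circ S_1$ and show that the two composites $T_{12} \circ T_{21} \colon \CY_2 \to \CY_2$ and $T_{21} \circ T_{12} \colon \CY_1 \to \CY_1$ are both the identity functor. Since all the functors involved act as the identity on underlying set-maps (the forgetful functors $F_i$ do, and $S_i(\mu)$ has the same underlying set-map as $\mu$ by Definition~\ref{def:sfunctor}(2)), it suffices to check the claim on objects; morphisms will then take care of themselves. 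So the whole proof reduces to two object-level identities: $F_2 \circ S_1 \circ F_1 \circ S_2(\X) = \X$ for each $\X \in \CY_2$, and the symmetric statement for $\X \in \CY_1$.

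First I would record the key input from the previous section: Lemma~\ref{lem:transf} (together with Remark~\ref{rem:symm}) tells us that $T_{21}$ and $T_{12}$ really do land in $\CY_1$ and $\CY_2$ respectively, so the composites make sense. Then the crucial observation is Lemma~\ref{lem:forget}: part~(2) says $S_2 \circ F_2 = \mathrm{id}_{\CY_\Omega}$, and by the symmetric construction (Remark~\ref{rem:symm}) we also have $S_1 \circ F_1 = \mathrm{id}_{\CY_\Omega}$, where here I mean the sharp functor built from $\MT_1$ using the formul\ae\ $\widehat\delta_r$. With these in hand, compute
\[
 T_{12} \circ T_{21} = F_2 \circ S_1 \circ F_1 \circ S_2
  = F_2 \circ (S_1 \circ F_1) \circ S_2
  = F_2 \circ \mathrm{id}_{\CY_\Omega} \circ S_2
  = F_2 \circ S_2 = \mathrm{id}_{\CY_2},
\]
where the last equality is Lemma~\ref{lem:forget}(1). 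By the symmetric argument, swapping the roles of the indices $1$ and $2$ (which is legitimate by Remark~\ref{rem:symm}, since $\MT_1$ satisfies \ref{su:init}{\Ahom}, \ref{su:init}{\Aop} and \ref{su:more}{\Aaxi} whenever $\MT_1$ fully dualises $\MB$ at the finite level), we get $T_{21} \circ T_{12} = F_1 \circ S_1 = \mathrm{id}_{\CY_1}$. Hence $T_{12}$ and $T_{21}$ are mutually inverse, which is exactly the claim.

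The one subtle point — and the main thing to get right — is the composition $S_1 \circ F_1$. The functor $S_1$ was defined (via Remark~\ref{rem:symm}) only on $\CY_1$, not on all of $\CZ_\Omega$; but the output of $F_1 \colon \CZ_\Omega \to \CZ_1$ applied to an object of $\CY_\Omega$ does land in $\CY_1$, because reducing a model of $\ThuH{\MT_\Omega}$ along $F_1$ gives a model of the (weaker) theory $\ThuH{\MT_1}$. So $S_1 \circ F_1$ is defined on $\CY_\Omega$, and Lemma~\ref{lem:forget}(2) — read with the index $1$ in place of $2$, which is valid precisely because $\MT_1$ fully dualises $\MB$ at the finite level — gives $S_1 \circ F_1 = \mathrm{id}_{\CY_\Omega}$. (Concretely, Lemma~\ref{lem:forget}(2) says that applying $S_k$ to $F_k(\X)$ recovers every relation and partial operation of $\X \in \CY_\Omega$, using the defining formul\ae\ $\beta_r$ (resp.\ $\delta_r$) together with operational richness of $\MT_k$ at the hom-minimal relations; the different choice of formul\ae\ for $\MT_1$ versus $\MT_2$ is immaterial here since in each case the formula defines the correct relation $r$ in $\MT_\Omega$, hence in all of $\CY_\Omega$.) Once this is set up cleanly, the proof is a two-line functor calculation; I do not anticipate any genuine obstacle beyond keeping the bookkeeping of which sharp functor uses which formul\ae\ straight.
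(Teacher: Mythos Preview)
Your functor calculation hinges on the identity $S_1 \circ F_1 \circ S_2 = S_2$ on~$\CY_2$, which you justify by invoking Lemma~\ref{lem:forget}(2) (with index~$1$) to get $S_1 \circ F_1 = \mathrm{id}_{\CY_\Omega}$. But for that to apply you need $S_2(\X) \in \CY_\Omega$ for every $\X \in \CY_2$, and you never establish this. The sharp functor $S_2$ is only known to land in~$\CZ_\Omega$; the assumptions {\Ahom}, {\Aop}, {\Aaxi} together with Lemma~\ref{lem:new} guarantee that $S_2(\X)$ satisfies each sentence in the chosen basis $\Sigma_1$ for $\ThuH{\MT_1}$ (this is exactly Lemma~\ref{lem:transf}), not that it satisfies every uH-sentence true in~$\MT_\Omega$. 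The subtle point you flag in your last paragraph---whether $S_1 \circ F_1$ is defined on the image of~$S_2$---is the wrong subtlety: $S_1$ is indeed applicable because $F_1 S_2(\X) \in \CY_1$, but Lemma~\ref{lem:forget}(2) does not say $S_1 F_1(\Y) = \Y$ for arbitrary $\Y \in \CZ_\Omega$, only for $\Y \in \CY_\Omega$.

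The paper's proof sidesteps this by never claiming $S_2(\X) \in \CY_\Omega$. Instead it shows directly, for each $r \in R_2 \cup \graph{H_2}$, that $S_2(\X)$ satisfies the specific biconditional $\forall \vec v \vec w\,\bigl[\widehat\beta_r(\vec v,\vec w) \leftrightarrow \widehat\delta_r(\vec v,\vec w)\bigr]$. This is what is actually needed to get $r^\X = r^{T_{12}T_{21}(\X)}$, and each direction is handled by Lemma~\ref{lem:new}: the implication $\widehat\delta_r \to \widehat\beta_r$ has conclusion in the language of~$\MT_2$, while $\widehat\beta_r \to \widehat\delta_r$ has premise defining the hom-minimal relation~$\widehat r$, at which $\MT_2$ is automatically operationally rich. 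It is worth noting that $S_2(\X) \in \CY_\Omega$ does turn out to be true under the standing assumptions, but only as a consequence of Theorem~\ref{thm:fatfl_char}(3c), which is proved \emph{using} this very lemma; so appealing to it here would be circular.
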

\begin{proof}
By Lemma~\ref{lem:transf} and Remark~\ref{rem:symm},
the two transfer functors are well defined.
We just need to show that they are mutually inverse.

By the symmetry between the definitions of the functors~$T_{12}$ and~$T_{21}$
(see Remark~\ref{rem:symm}), it is enough to show that $\X = T_{12}T_{21}(\X)$,
for some arbitrary $\X \in \CY_2$. Let $r \in R_2 \cup \graph
{H_2}$. We use $\widehat r$ to denote the associated
hom-minimal relation; see Definition~\ref{defn:betas_pt1}. We
have a conjunct-atomic formula $\widehat\beta_r(\vec v,\vec w)$
in the language of $\MT_2$ that defines $\widehat r$
in~$\MT_2$, and a conjunct-atomic formula
$\widehat\delta_r(\vec v,\vec w)$ in the language of $\MT_1$
that defines $\widehat r$ in~$\MT_1$; see
Definition~\ref{defn:betas_pt2} and Remark~\ref{rem:symm}. Thus
$\MT_\Omega$ satisfies the sentence $\sigma := \forall \vec v
\vec w \, \bigl[\widehat\beta_r(\vec v,\vec w) \leftrightarrow
\widehat\delta_r(\vec v, \vec w)\bigr]$.

By Lemma~\ref{lem:reduct}, the relation $r^\X$ is defined by
the formula $\exists \vec w \,\widehat\beta_r(\vec v, \vec w)$
in $S_2(\X)$. The relation $r^{T_{12}T_{21}(\X)}$ is equal to
the relation $r^{S_1F_1S_2(\X)}$, which is described by the
formula $\exists \vec w \,\widehat\delta_r(\vec v,\vec w)$ in
$S_2(\X)$. So we can show that $r^\X = r^{T_{12}T_{21}(\X)}$ by
checking that $S_2(\X)$ satisfies the sentence~$\sigma$.

First consider the backwards implication $\sigma_b := \forall
\vec v \vec w\,\bigl[\widehat\delta_r(\vec v, \vec w) \rightarrow
\widehat\beta_r(\vec v,\vec w)\bigr]$. This is logically equivalent
to a set $\Sigma_b$ of uH-sentences, each of which holds
in~$\MT_\Omega$. Using Lemma~\ref{lem:pure}, we can convert
$\Sigma_b$ into a logically equivalent set $\Phi_b$ of pure
uH-sentences. The conclusion of each sentence in $\Phi_b$ is in
the language of~$\MT_2$; see Remark~\ref{rem:extra}(1). So
$S_2(\X) \models \sigma_b$, by Lemma~\ref{lem:new}.

Now consider the forwards implication $\sigma_f := \forall \vec
v \vec w\, \bigl[\widehat\beta_r(\vec v,\vec w) \rightarrow
\widehat\delta_r(\vec v, \vec w)\bigr]$. This is logically
equivalent to a set $\Sigma_f$ of uH-sentences, each of
which holds in~$\MT_\Omega$. Using Lemma~\ref{lem:pure} again,
we can convert $\Sigma_f$ into a logically equivalent set
$\Phi_f$ of pure uH-sentences. Let $\phi \in \Phi_f$.
By Remark~\ref{rem:extra}(2), since $\MT_\Omega \models \sigma_f$,
there is a bijective projection
$\rho \colon \rel {\MT_\Omega} \phi \to \rel {\MT_\Omega}
{\sigma_f}$. The alter ego $\MT_2$ is operationally rich at the relation
$\widehat r = \rel {\MT_\Omega} {\sigma_f}$, since it is hom-minimal, and
therefore $\MT_2$ is also operationally rich at $\rel
{\MT_\Omega} \phi$, by Lemma~\ref{lem:graph-dom}. So
$S_2(\X) \models \phi$, by Lemmas~\ref{lem:new} and~\ref{lem:q}.
It follows that $S_2(\X) \models \sigma_f$, as required.
\end{proof}

We wrap up this section with the following result.

\begin{lemma}\label{lem:wildtrans}
Assume that\/ $\MT_2$ satisfies~\up{\ref{su:init}\Ahom,}
\up{\ref{su:init}\Aop} and \up{\ref{su:more}\Aaxi}.
If\/ $\MT_1$ fully dualises $\MB$ \up[at the finite
level\up]\up, then the following are equivalent\up:
\begin{enumerate}
 \item $\MT_2$ fully dualises $\MB$ \up[at the finite
     level\up]\up;
 \item the transfer functor $T_{21} := F_1  S_2 \colon
     \CY_2 \to \CY_1$ sends each \up[finite\up] structure
     in\/ $\CX_2$ into~$\CX_1$.
\end{enumerate}
\end{lemma}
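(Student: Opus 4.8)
The plan is to apply the New-from-old Lemma~\ref{lem:transfer} with the two alter egos
in the roles indicated by their subscripts, using the transfer functor $T_{21}$ to supply
the required reducts. First I would recall that, by the Full Duality
Lemma~\ref{lem:fdual}, the hypotheses~\ref{su:init}{\Ahom} and~\ref{su:init}{\Aop} together
say precisely that $\MT_2$ satisfies condition~(a) and clause~(b) of that lemma at the
relations in $R_2\cup\dom{H_2}$; combined with~\ref{su:more}{\Aaxi} this will turn out to be
exactly what is needed to show that $\MT_2$ dualises~$\MB$ at the finite level, which is
condition~(1) of the New-from-old Lemma. Indeed, condition~\Ahom{} already says every
hom-minimal relation on $\MB$ belongs to $\cadef{\MT_2}$, so by the Duality
Lemma~\ref{lem:dual} the alter ego $\MT_2$ dualises $\MB$ at the finite level outright; this
handles the $\CX_2$-duality half of both directions without any appeal to~$\MT_1$.

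For the implication $(2)\Rightarrow(1)$, the heart of the argument: assume $T_{21}$ carries
every [finite] structure of $\CX_2$ into~$\CX_1$. Given a [finite] $\X\in\CX_2$, set
$\X':=T_{21}(\X)=F_1\circ S_2(\X)$, which lies in $\CX_1$ by hypothesis and has the same
underlying set as~$\X$ (the sharp functor $S_2$ and the forgetful functor $F_1$ both fix the
carrier). It then remains to check the hom-set condition
$\CX_2(\X,\MT_2)=\CX_1(\X',\MT_1)$. A set-map $\mu\colon X\to M$ is a morphism
$\X\to\MT_2$ iff it is compatible with the relations and partial operations of $\MT_2$; by
Lemma~\ref{lem:reduct} these interpretations agree with those of $S_2(\X)$ restricted to the
signature of $\MT_2$, and since $S_2(\MT_2)=\MT_\Omega$ by Note~\ref{not:sharp}, compatibility
with $\MT_2$ is equivalent to compatibility with the $\MT_2$-signature part of the
$\MT_\Omega$-structure $S_2(\X)\to S_2(\MT_2)$; but the primitive-positive formulas
$\beta_r$ used to build $S_2$ are interpreted the same way in every member of $\CY_\Omega$,
so a map $S_2(\X)\to\MT_\Omega$ respecting the $\MT_2$-part automatically respects everything
(this is essentially Lemma~\ref{lem:forget}). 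The same reasoning via $F_1$ shows
$\mu\colon\X'\to\MT_1$ is a morphism iff $\mu\colon S_2(\X)\to\MT_\Omega$ is a morphism. Hence
both hom-sets equal $\CZ_\Omega(S_2(\X),\MT_\Omega)=\CX_\Omega(S_2(\X),\MT_\Omega)$, giving
condition~(2) of the New-from-old Lemma; since $\MT_1$ fully dualises $\MB$ [at the finite
level] by assumption, the lemma yields that $\MT_2$ does too.

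For the converse $(1)\Rightarrow(2)$, assume $\MT_2$ fully dualises $\MB$ [at the finite
level]. Then $\MT_2$ also satisfies~\ref{su:init}{\Ahom}, \ref{su:init}{\Aop}
and~\ref{su:more}{\Aaxi} (given), so by Lemma~\ref{lem:inv} the transfer functors $T_{12}$ and
$T_{21}$ are mutually inverse category isomorphisms $\CY_1\leftrightarrow\CY_2$. Take a
[finite] $\X\in\CX_2$; we must show $T_{21}(\X)\in\CX_1$. The structure $T_{21}(\X)$ certainly
lies in $\CY_1$ and has the same carrier and topology as $\X$. Using the hom-set identities
just established, $E_1(T_{21}(\X))=\CX_1(T_{21}(\X),\MT_1)=\CX_2(\X,\MT_2)=E_2(\X)$ as objects
of $\CA$, and — since both $\MT_1$ and $\MT_2$ fully dualise $\MB$ [at the finite level] —
applying $D_1$ and $D_2$ respectively recovers $T_{21}(\X)$ and $\X$ up to isomorphism from
these equal dual algebras. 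More directly: $\varepsilon$ for $\MT_1$ embeds $T_{21}(\X)$ into
$D_1E_1(T_{21}(\X))=D_1E_2(\X)$, and because $\MT_2$ fully dualises, $\X\cong D_2E_2(\X)$, so
$T_{21}(\X)$ is (isomorphic to, hence, having the same carrier, equal to) a member of
$\CX_1$. I expect the main obstacle to be the bookkeeping in this last step: one must be
careful that the abstract category isomorphism $T_{21}$ of Lemma~\ref{lem:inv}, the concrete
equality of hom-sets, and the full-duality isomorphisms $\varepsilon$ all refer to the same
underlying structure on the same underlying set, so that membership in $\CX_1$ — a concrete,
substructure-sensitive condition — genuinely follows rather than mere categorical
equivalence. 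This is exactly the delicate interplay the final section of the paper warns
about, so the argument will need to lean on the fact that $F_1$, $F_2$, $S_1$, $S_2$ all fix
carriers and topologies and that $\beta_r$-interpretation is uniform across $\CY_\Omega$.
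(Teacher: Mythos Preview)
Your overall architecture is right, but there are two genuine gaps.

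\textbf{In $(2)\Rightarrow(1)$, the hom-set equality.} The forward inclusion
$\CY_2(\X,\MT_2)\subseteq\CZ_\Omega(S_2(\X),\MT_\Omega)$ is fine: $S_2$ is a functor built
from pp-formul\ae\ in the language of~$\MT_2$. But the claim that ``the same reasoning via
$F_1$'' gives $\CY_1(\X',\MT_1)=\CZ_\Omega(S_2(\X),\MT_\Omega)$ is not symmetric. The
relations of $S_2(\X)$ are defined by the $\MT_2$-language formul\ae~$\beta_r$, not by the
$\MT_1$-language formul\ae~$\delta_r$, so a morphism $\mu\colon\X'\to\MT_1$ has no a~priori
reason to preserve them. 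What you actually need is $S_1(\X')=S_2(\X)$, and that is precisely
the content of Lemma~\ref{lem:inv}. The paper therefore simply invokes Lemma~\ref{lem:inv}:
$T_{21}$ is a \emph{concrete} category isomorphism with $T_{21}(\MT_2)=\MT_1$, so
$\CY_2(\X,\MT_2)=\CY_1(\X',\MT_1)$ immediately as sets of set-maps, and the New-from-old
Lemma applies. (You also omit the step, needed for the unbracketed version, that $\MT_2$
dualises~$\MB$ at the infinite level; the paper gets this from Remark~\ref{rem:lift}.)

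\textbf{In $(1)\Rightarrow(2)$, the use of $\varepsilon$.} Your argument says
``$\varepsilon$ for $\MT_1$ embeds $T_{21}(\X)$ into $D_1E_1(T_{21}(\X))$''. But
$\varepsilon_\Y$ is only known to be an embedding for $\Y\in\CX_1$; for a general
$\Y\in\CY_1$ the evaluation map is a continuous morphism, yet it need not \emph{reflect}
structure (that it does is essentially equivalent to $\Y\in\CX_1$). So this is circular.
The paper avoids the problem by a direct computation that never leaves the categories
$\CX_i$: set $\A:=E_2(\X)$, so that $\X\cong D_2(\A)$ by full duality of~$\MT_2$; then,
using $D_i(\A)=F_iD_\Omega(\A)$ and Lemma~\ref{lem:forget}(2) (applied to
$D_\Omega(\A)\in\CX_\Omega\subseteq\CY_\Omega$),
\[
 T_{21}(\X)\ \cong\ F_1S_2D_2(\A)\ =\ F_1S_2F_2D_\Omega(\A)\ =\ F_1D_\Omega(\A)\ =\ D_1(\A)\in\CX_1.
\]
The key trick you are missing is to pass through $D_\Omega(\A)$, where
Lemma~\ref{lem:forget}(2) legitimately applies, rather than trying to force an embedding of
an object only known to lie in~$\CY_1$.
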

\begin{proof}
(2)\,$\Rightarrow$\,(1): Assume that (2) holds.
The alter ego $\MT_2$ dualises $\MB$ at the finite level, by~\ref{su:init}{\Ahom}
and the Duality Lemma~\ref{lem:dual}. If $\MT_1$ dualises $\MB$,
then so does $\MT_2$; see Remark~\ref{rem:lift}.
By the previous lemma, the transfer functor $T_{21} \colon \CY_2 \to \CY_1$
is a category isomorphism that preserves underlying sets and set-maps,
and by Note~\ref{not:sharp} we have $T_{21}(\MT_2) = \MT_1$.
It follows that $\MT_2$ fully dualises $\MB$ [at the finite level],
using condition~(2) and the New-from-old Lemma~\ref{lem:transfer}.

(1)\,$\Rightarrow$\,(2): Assume $\MT_2$ fully dualises $\MB$
[at the finite level]. For $i\in \{1,2,\Omega\}$, let
$D_i \colon \CA \to \CX_i$ and $E_i \colon \CX_i \to \CA$
denote the hom-functors induced by $\MB$ and~$\MT_i$.
Let $\X$ be a [finite] structure in~$\CX_2$,
and define the algebra $\A := E_2(\X) \in \CA$.
Then $\X \cong D_2E_2(\X) = D_2(\A)$. Since $D_i(\A) = F_i
D_\Omega (\A)$, for each $i \in \{1,2\}$, using Lemma~\ref{lem:forget}(2) yields
\[
D_1(\A) = F_1 D_\Omega (\A)= F_1S_2F_2D_\Omega (\A)=F_1S_2D_2(\A).
\]
So we have $F_1S_2(\X) \cong F_1S_2D_2(\A) = D_1(\A) \in \CX_1$, as
required.
\end{proof}

\section{The New-from-old Theorem and its applications}\label{sec:thm}

We now have all the ingredients necessary to state and prove
our main theorem. Recall that an alter ego $\MT$ is
\emph{standard} if the potential dual category $\CX = \IScP\MT$
consists precisely of all Boolean models of $\ThuH \MT$;
see Definition~\ref{def:std}.

\begin{theorem:transfer}\label{thm:trans}
Let\/ $\MB$ be a finite algebra\up, and let\/ $\MT_1$ and\/ $\MT_2$ be alter egos
of\/~$\MB$. Assume that\/ $\MT_2$ satisfies~\up{\ref{su:init}\Ahom,}
\up{\ref{su:init}\Aop} and \up{\ref{su:more}\Aaxi}.
\begin{enumerate}
 \item If\/ $\MT_1$ fully dualises $\MB$ at the finite
     level\up, then so does\/~$\MT_2$.
 \item If\/ $\MT_1$ is standard and fully
     dualises\/~$\MB$\up, then the same is true
     of\/~$\MT_2$.
\end{enumerate}
\end{theorem:transfer}
\begin{proof}
Part~(1) follows directly from Lemma~\ref{lem:wildtrans},
because we automatically have $(\CX_1)_{\mathrm{fin}} =
(\CY_1)_{\mathrm{fin}}$; see Notation~\ref{not:xy}
and Definition~\ref{def:std}.

To prove part~(2), assume that $\MT_1$ is standard and fully
dualises~$\MB$. Since $\MT_1$ is standard, we have $\CX_1 =
\CY_1$. It follows by Lemma~\ref{lem:wildtrans} that $\MT_2$
fully dualises~$\MB$. To see that $\MT_2$ is also standard, let
$\X \in \CY_2$. Then $T_{21}(\X) \in \CY_1 = \CX_1$. As we have
shown that $\MT_2$ fully dualises~$\MB$, we can use
Lemma~\ref{lem:wildtrans} (with the subscripts $1$ and $2$ swapped)
to deduce that $T_{12}T_{21}(\X) \in \CX_2$. Therefore Lemma~\ref{lem:inv}
gives $\X = T_{12}T_{21}(\X) \in \CX_2$. Thus $\MT_2$ is standard.
\end{proof}

\begin{warning}
In the signature of the alter ego~$\MT_1 = \langle M; H_1, R_1, \T \rangle$,
all operations are considered as partial operations.  This means
we are not privileging operations that happen to be total with the logical
status of being total operations.
To apply the New-from-old Theorem~\ref{thm:trans}, the uH-basis chosen
for~$\MT_1$ must imply \emph{all} uH-sentences true in~$\MT_1$,
including those of the form
$\forall v_1\dots v_n\,\bigl[f(v_1,\dots,v_n)=f(v_1,\dots,v_n)\bigr]$,
where $f$ is an $n$-ary total operation on~$M$ for some $n\ge 0$.
\end{warning}

We now use this rather technical theorem to obtain a series of
self-contained corollaries. First we use the theorem to give a new
and very natural condition under which every finite-level full duality
lifts to the infinite level.

\begin{theorem}\label{cor:std}
Let\/ $\MB$ be a finite algebra. Assume that\/ $\MB$ is fully
dualised by a standard alter ego. If an alter ego\/ $\MT$ fully
dualises $\MB$ at the finite level\up, then\/ $\MT$ is standard
and fully dualises\/~$\MB$.
\end{theorem}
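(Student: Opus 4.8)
The plan is to deduce this directly from the New-from-old Theorem~\ref{thm:trans}(2). Let $\MT_1$ be a standard alter ego that fully dualises $\MB$ (supplied by the hypothesis), and write $\MT_2 = \langle M; H_2, R_2, \T\rangle$ for $\MT$. It then suffices to verify that $\MT_2$ satisfies the three standing assumptions \ref{su:init}{\Ahom}, \ref{su:init}{\Aop} and \ref{su:more}{\Aaxi}: once that is done, part~(2) of that theorem applies with these $\MT_1$, $\MT_2$, and since $\MT_1$ is standard and fully dualises $\MB$, it concludes that $\MT_2 = \MT$ is standard and fully dualises $\MB$, which is exactly the assertion.

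Each of the three assumptions is forced by the hypothesis that $\MT$ fully dualises $\MB$ at the finite level. For \ref{su:init}{\Ahom} and \ref{su:init}{\Aop} I would invoke the Full Duality Lemma~\ref{lem:fdual}: it gives that every hom-minimal relation on $\MB$ lies in $\cadef{\MT_2}$, which is \ref{su:init}{\Ahom}, and that $\MT_2$ is operationally rich at every relation in $\cadef{\MT_2}$; since each $r \in R_2$ is defined in $\MT_2$ by the atomic formula $r(v_1,\dots,v_n)$ and each $\dom h$ with $h \in H_2$ by the atomic formula $h(v_1,\dots,v_{n-1}) = h(v_1,\dots,v_{n-1})$, we have $R_2 \cup \dom{H_2} \subseteq \cadef{\MT_2}$, and \ref{su:init}{\Aop} follows. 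For \ref{su:more}{\Aaxi}, fix any pure basis $\Sigma_1$ for $\ThuH{\MT_1}$ — this exists by Lemma~\ref{lem:pure} — and let $\phi \in \Sigma_1$. By Note~\ref{not:axi} the premise-relation $\rel{\MT_2}{\phi^\natural}$ (from Definition~\ref{defn:nat}) is conjunct-atomic definable from hom-minimal relations on $\MB$; these are already in $\cadef{\MT_2}$ by \ref{su:init}{\Ahom} and $\cadef{\MT_2}$ is closed under conjunct-atomic definitions, so $\rel{\MT_2}{\phi^\natural} \in \cadef{\MT_2}$, whence $\MT_2$ is operationally rich there by the Full Duality Lemma~\ref{lem:fdual}(b). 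Thus \ref{su:more}{\Aaxi} holds (indeed for every pure basis).

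I do not expect a genuine obstacle: all the substance lives in the New-from-old Theorem and the lemmas of Sections~\ref{sec:sharp}--\ref{sec:trans}, and this corollary is just the observation that a finite-level full duality automatically supplies hypotheses \ref{su:init}{\Ahom}, \ref{su:init}{\Aop} and \ref{su:more}{\Aaxi} on $\MT = \MT_2$ — a fact already flagged in the text just after Assumptions~\ref{su:init} and in Note~\ref{not:axi}. The only point needing a moment's attention is the routine bookkeeping that $R_2 \cup \dom{H_2}$ and each $\rel{\MT_2}{\phi^\natural}$ land in $\cadef{\MT_2}$, which is immediate once the relevant conjunct-atomic defining formulae are written down.
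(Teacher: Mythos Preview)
Your proposal is correct and follows essentially the same approach as the paper's proof: set $\MT_2 := \MT$, take $\MT_1$ to be the given standard fully dualising alter ego, and deduce the three assumptions {\Ahom}, {\Aop}, {\Aaxi} from the Full Duality Lemma~\ref{lem:fdual} and Note~\ref{not:axi}, then apply the New-from-old Theorem~\ref{thm:trans}(2). The paper's proof is terser but identical in structure; your additional verification that $R_2 \cup \dom{H_2} \subseteq \cadef{\MT_2}$ and that $\rel{\MT_2}{\phi^\natural} \in \cadef{\MT_2}$ simply unpacks what the paper leaves implicit in its citations.
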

\begin{proof}
Let $\MT_1$ be a standard alter ego that fully dualises~$\MB$.
Assume that $\MT$ fully dualises $\MB$ at the finite level.
Then we can take $\MT_2 := \MT$ and the assumptions of the
New-from-old Theorem~\ref{thm:trans} are satisfied, by the
Full Duality Lemma~\ref{lem:fdual} and Note~\ref{not:axi}.
Thus $\MT$ is standard and fully dualises $\MB$.
\end{proof}

\begin{example}\label{ex:qp}
The previous theorem can be applied to
\emph{quasi-primal algebras}, that is, to finite algebras~$\MB$
such that the \emph{ternary discriminator} $t\colon M^3 \to M$
is a term function of~$\MB$, where
\[
t(x, y, z) =
 \begin{cases}
 x &\text{if $x\ne y$,}\\
 z &\text{if $x = y$.}
\end{cases}
\]
Davey and Werner~\cite[2.7]{DW:old} have shown that every
quasi-primal algebra has a standard, strongly dualising alter
ego. So, for any quasi-primal algebra~$\MB$, the finite-level
full dualities always lift to full dualities.
\end{example}

We can also use the New-from-old Theorem to refine the intrinsic
description of finite-level full dualities given by the
Full Duality Lemma~\ref{lem:fdual}.

\begin{theorem}\label{thm:fatfl_char}
Let\/ $\MT = \langle M; H, R, \T \rangle$ be an alter ego of a
finite algebra\/~$\MB$. Then the following are equivalent\up:
\begin{enumerate}
 \item $\MT$ fully dualises $\MB$ at the finite level\up;
 \item
\begin{enumerate}
 \item every hom-minimal relation on\/ $\MB$ belongs to $\cadef \MT$\up, and
 \item $\MT$ is operationally rich at each relation in\/ $\cadef \MT$\up;
\end{enumerate}
 \item
\begin{enumerate}
 \item every hom-minimal relation on\/ $\MB$ belongs to $\cadef \MT$\up,
 \item $\MT$ is operationally rich at each relation in\/ $R \cup \dom H$\up, and
 \item $\MT$ is operationally rich at each relation
     that is  conjunct-atomic definable from
     hom-minimal relations.
\end{enumerate}
\end{enumerate}
\end{theorem}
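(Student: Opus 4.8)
The equivalence (1)\,$\Leftrightarrow$\,(2) is exactly the Full Duality Lemma~\ref{lem:fdual}, so there is nothing to do there. The real content is (2)\,$\Leftrightarrow$\,(3), and the plan is to prove this by exhibiting the New-from-old Theorem~\ref{thm:trans} as the engine. The implication (2)\,$\Rightarrow$\,(3) is the easy direction: assuming (2), parts (3a) and (3b) are immediate (since $R\cup\dom H\subseteq\cadef\MT$), and (3c) follows because every relation conjunct-atomically definable from hom-minimal relations lies in $\cadef\MT$ once (2a) holds — so operational richness at every relation of $\cadef\MT$ gives operational richness there. The one point to be careful about is that $\MT$ must be operationally rich at $M^n$ for each $n$, hence must contain all compatible total operations in its enriched clone; but $M^n\in\cadef\MT$ always, so (2b) handles this.

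For the harder direction (3)\,$\Rightarrow$\,(1), the plan is to apply Theorem~\ref{thm:trans}(1) with $\MT_2:=\MT$ and with $\MT_1:=\MT_\Omega$, the top alter ego, which strongly — hence fully — dualises $\MB$ at the finite level (Facts~\ref{fac:galois}(4)), so in particular satisfies the finite-level full duality hypothesis of the theorem. I must check that $\MT$ satisfies the three assumptions {\Ahom}, {\Aop} and {\Aaxi}. Assumption {\Ahom} is exactly (3a). Assumption {\Aop} is exactly (3b). The work is in verifying {\Aaxi}: I need a pure uH-basis $\Sigma_1$ for $\ThuH{\MT_\Omega}$ and must show that for each $\phi\in\Sigma_1$ whose conclusion is not in the language of $\MT$, the alter ego $\MT$ is operationally rich at $\rel \MT {\phi^\natural}$. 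Here I use Lemma~\ref{lem:q}: it suffices that $\MT$ be operationally rich at $\rel{\MT_\Omega}\phi$. Now $\rel{\MT_\Omega}\phi$ is the relation defined by the premise of the pure uH-sentence $\phi$, which is a conjunction of atomic formul\ae\ in the language of $\MT_\Omega$; that is, it is conjunct-atomic definable from compatible relations. But every compatible relation is conjunct-atomically definable from hom-minimal relations (this is essentially the content underlying Definition~\ref{defn:betas_pt1}: each compatible $r$ is a bijective projection of the hom-minimal $\widehat r$), and conjunct-atomic definability is transitive, so $\rel{\MT_\Omega}\phi$ is conjunct-atomic definable from hom-minimal relations. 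Hence by (3c), $\MT$ is operationally rich at $\rel{\MT_\Omega}\phi$, which is what Lemma~\ref{lem:q} needs. This verifies {\Aaxi}, and Theorem~\ref{thm:trans}(1) then gives that $\MT$ fully dualises $\MB$ at the finite level.

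The main obstacle I anticipate is the bookkeeping around {\Aaxi} and the precise sense in which "conjunct-atomic definable from hom-minimal relations" behaves under the bijective-projection moves of Lemma~\ref{lem:pure}/Remark~\ref{rem:extra}: I need $\rel{\MT_\Omega}\phi$, for $\phi$ in a \emph{pure} basis, to be reachable by conjunct-atomic definitions from hom-minimal relations, and to know that operational richness transfers along the relevant bijective projections (which is exactly Lemma~\ref{lem:graph-dom}). A second, more cosmetic point is that the hypothesis (3c) as stated refers to "conjunct-atomic definable from hom-minimal relations", and I should confirm that the premises of pure uH-sentences in the chosen basis really do fall under this description after expanding the $\widehat\beta$-style definitions — but since the basis can be chosen freely and every compatible relation is so definable, this is safe. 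Everything else is routine application of the results already established.
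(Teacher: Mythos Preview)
Your overall plan matches the paper's exactly: the equivalence (1)\,$\Leftrightarrow$\,(2) is Lemma~\ref{lem:fdual}, (2)\,$\Rightarrow$\,(3) is immediate, and (3)\,$\Rightarrow$\,(1) is obtained by applying the New-from-old Theorem~\ref{thm:trans} with $\MT_1 = \MT_\Omega$ and $\MT_2 = \MT$. Your verifications of {\Ahom} and {\Aop} from (3a) and (3b) are correct.

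However, your verification of {\Aaxi} contains a genuine error. You claim that ``every compatible relation is conjunct-atomically definable from hom-minimal relations'', justifying this by the fact that each compatible $r$ is a bijective projection of the hom-minimal relation~$\widehat r$. But a bijective projection is an \emph{existential} construction, not a conjunct-atomic one: recovering $r$ from $\widehat r$ uses the primitive-positive formula $\beta_r(\vec v) = \exists\vec w\,\widehat\beta_r(\vec v,\vec w)$ of Definition~\ref{defn:betas_pt2}, which involves a quantifier. And Lemma~\ref{lem:graph-dom} does not help, since it transfers operational richness from the target of a bijective projection to the source, not the reverse. In fact your claim is false in general: if it held, then together with (3a) it would force $R_\alpha = R_\Omega$ in the description of Theorem~\ref{cor:alpha}, hence $\MT_\alpha \equiv \MT_\Omega$; but the whole point of the $\ThB$ example is that $\ThT_\alpha \equiv \ThT_h$ lies strictly below $\ThT_\Omega \equiv \ThT_\sigma$.

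The fix is to drop the detour through Lemma~\ref{lem:q} and $\rel{\MT_\Omega}\phi$ entirely. As the paper observes (Note~\ref{not:axi}), the relation $\rel{\MT}{\phi^\natural}$ is \emph{directly} conjunct-atomic definable from hom-minimal relations: by Definition~\ref{defn:nat}, the premise of $\phi^\natural$ is a conjunction of formul\ae\ of the form $\widehat\beta_{r_i}(\vec v_i,\vec w_i)$, each of which is a conjunct-atomic formula defining the hom-minimal relation~$\widehat r_i$. Hence (3c) immediately gives operational richness at $\rel{\MT}{\phi^\natural}$, which is precisely what {\Aaxi} requires. This is the paper's route, and it avoids the false step.
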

\begin{proof}
Using the Full Duality Lemma~\ref{lem:fdual},
we only need to prove that (3)\,$\Rightarrow$\,(1).
So assume that (3) holds. We check that we can apply the New-from-old
Theorem~\ref{thm:trans} with $\MT_1 = \MT_\Omega$ and $\MT_2 = \MT$.
First note that the top alter ego $\MT_\Omega$ must fully dualise $\MB$
at the finite level, by the Full Duality Lemma~\ref{lem:fdual}.
Conditions~\ref{su:init}{\Ahom} and~\ref{su:init}{\Aop} correspond to assumptions~(3)(a)
and~(3)(b). Condition~\ref{su:more}{\Aaxi} holds by assumption~(3)(c), because each relation
$\rel {\MT} {\phi^\natural}$ is conjunct-atomic definable from hom-minimal relations; see
Note~\ref{not:axi}.
\end{proof}

From the previous result, we easily obtain a `constructive'
description of the smallest full-at-the-finite-level
alter ego~$\MT_\alpha$; see Facts~\ref{fac:galois}(2).

\begin{theorem}\label{cor:alpha}
Let\/ $\MB$ be a finite algebra. Define the sets
\begin{itemize}
 \item $R_\alpha$ of all compatible relations on $\MB$ that
     are conjunct-atomic definable from the hom-minimal
     relations on\/~$\MB$\up, and
 \item $H_\alpha$ of all compatible partial operations on
     $\MB$ with domain in~$R_\alpha$.
\end{itemize}
Then $\MT_\alpha = \langle M; H_\alpha, R_\alpha,\T\rangle$
is the smallest alter ego that fully dualises $\MB$ at the finite level.
\end{theorem}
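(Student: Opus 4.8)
The plan is to prove the two halves of the statement separately: first that the explicitly described structure $\MT_\alpha$ really does fully dualise $\MB$ at the finite level, and then that $\MT_\alpha$ lies below every alter ego with this property in the structural-reduct quasi-order. The existence of a smallest full-at-the-finite-level alter ego is already known (Facts~\ref{fac:galois}(2)); the substance of the theorem is the concrete identification, so both halves are genuinely required. Note that $\MT_\alpha$ is a legitimate alter ego by construction, since $R_\alpha$ consists of compatible relations and $H_\alpha$ of compatible partial operations.

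For the first half I would invoke the new characterisation, Theorem~\ref{thm:fatfl_char}, and verify condition~(3) for $\MT:=\MT_\alpha$. Clause~(3a) is immediate: every hom-minimal relation is trivially conjunct-atomic definable from the hom-minimal relations (by the single-atom formula naming itself), hence lies in $R_\alpha$ and therefore in $\cadef{\MT_\alpha}$. For clauses~(3b) and~(3c), observe first that $\dom{H_\alpha}\subseteq R_\alpha$ by the definition of $H_\alpha$, so $R_\alpha\cup\dom{H_\alpha}=R_\alpha$, and that the compatible relations conjunct-atomic definable from hom-minimal relations are precisely the members of $R_\alpha$; thus~(3b) and~(3c) both reduce to the single assertion that $\MT_\alpha$ is operationally rich at each $r\in R_\alpha$. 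This holds for a trivial reason: $H_\alpha$ was defined to contain \emph{every} compatible partial operation whose domain lies in $R_\alpha$, so any such partial operation is already a generator of $\clo{\MT_\alpha}$ and hence is (an extension of itself) in $\clo{\MT_\alpha}$. So Theorem~\ref{thm:fatfl_char} applies and $\MT_\alpha$ fully dualises $\MB$ at the finite level.

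For the second half, let $\MT=\langle M;H,R,\T\rangle$ be any alter ego that fully dualises $\MB$ at the finite level. By the Full Duality Lemma~\ref{lem:fdual} (equivalently Theorem~\ref{thm:fatfl_char}(2)), every hom-minimal relation belongs to $\cadef\MT$ and $\MT$ is operationally rich at each relation in $\cadef\MT$. I would next record that $\cadef{}$ is closed under conjunct-atomic definability: substituting, for each atom of a conjunct-atomic formula, a conjunct-atomic formula in the language of $\MT$ produces again a conjunct-atomic formula in the language of $\MT$ (repeated variables and equality atoms cause no difficulty). Since each $r\in R_\alpha$ is conjunct-atomic definable from hom-minimal relations and each of those lies in $\cadef\MT$, this closure yields $R_\alpha\subseteq\cadef\MT$, which, together with $\dom{H_\alpha}\subseteq R_\alpha$, is exactly clause~(b) of the definition of ``$\MT_\alpha$ is a structural reduct of $\MT$''. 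For clause~(a), take $h\in H_\alpha$ and put $r:=\dom h\in R_\alpha\subseteq\cadef\MT$; operational richness of $\MT$ at $r$ then supplies an extension of $h$ in $\clo\MT$. Hence $\MT_\alpha$ is a structural reduct of $\MT$. Combined with the first half, $\MT_\alpha$ is therefore the smallest alter ego that fully dualises $\MB$ at the finite level.

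The only point that calls for any care is the ``transitivity of $\cadef{}$'' observation used to obtain $R_\alpha\subseteq\cadef\MT$, and that is a routine syntactic substitution argument; everything else is bookkeeping with the definitions of $R_\alpha$, $H_\alpha$, operational richness, and the structural-reduct order. So I do not anticipate a real obstacle here, only the need to keep the substitution step and the reduction of the three clauses of Theorem~\ref{thm:fatfl_char}(3) to operational richness on $R_\alpha$ stated precisely.
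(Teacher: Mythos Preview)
Your proposal is correct and takes essentially the same approach as the paper: the paper's proof is the one-liner ``This follows from Theorem~\ref{thm:fatfl_char} (1)\,$\Leftrightarrow$\,(3)'', and what you have written is precisely the unpacking of that line---verifying condition~(3) for $\MT_\alpha$ to get the first half, and reading off from (1)\,$\Rightarrow$\,(3) (or equivalently the Full Duality Lemma~\ref{lem:fdual}) plus the transitivity of conjunct-atomic definability that any finite-level fully dualising $\MT$ has $\MT_\alpha$ as a structural reduct.
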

\begin{proof}
This follows from Theorem~\ref{thm:fatfl_char} (1)\,$\Leftrightarrow$\,(3).
\end{proof}

Using the transfer set-up from Section~\ref{sec:trans},
we can give a new proof of the known characterisation of when a full duality
is preserved under enriching the alter ego.

\begin{theorem}[{\cite[5.3]{DPW:galois}}]\label{thm:enrich}
Let\/ $\MT_1 = \langle M; H_1, R_1, \T \rangle$
and\/ $\MT_2 = \langle M; H_2, R_2, \T \rangle$
be alter egos of a finite algebra\/~$\MB$\up,
with\/ $\MT_1$ a structural reduct of\/~$\MT_2$.
Assume that\/ $\MT_1$ fully dualises\/~$\MB$ \up[at the finite level\/\up].
Then the following are equivalent\up:
\begin{enumerate}
\item $\MT_2$ fully dualises\/ $\MB$ \up[at the finite level\/\up]\up;
\item $\MT_2$ is operationally rich at each relation in\/
  $(R_2 {\setminus} R_1) \cup \dom {H_2 {\setminus} H_1}$.
\end{enumerate}
\end{theorem}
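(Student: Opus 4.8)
(1)\,$\Rightarrow$\,(2) needs nothing new: if $\MT_2$ fully dualises $\MB$ at the finite level then, by the Full Duality Lemma~\ref{lem:fdual}(b), $\MT_2$ is operationally rich at every relation in $\cadef{\MT_2}$, and $(R_2\setminus R_1)\cup\dom{H_2\setminus H_1}\subseteq R_2\cup\dom{H_2}\subseteq\cadef{\MT_2}$. So the plan is to concentrate on (2)\,$\Rightarrow$\,(1), which I would prove by feeding $\MT_1$ and $\MT_2$ into Lemma~\ref{lem:wildtrans}.

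Two elementary facts about the structural-reduct order do most of the work. First, operational richness transfers upward along a structural reduct: if $\MT_1$ is a structural reduct of $\MT_2$ and $\MT_1$ is operationally rich at a compatible relation $s$, then so is $\MT_2$, since every term in $\clo{\MT_1}$ has an extension in $\clo{\MT_2}$. Second, the alter egos that dualise $\MB$ at the finite level form a filter of $\mathcal A_\MB$ (Facts~\ref{fac:galois}(1)). Since $\MT_1$ fully dualises $\MB$ at the finite level (which follows from either version of the hypothesis), the Full Duality Lemma~\ref{lem:fdual} makes $\MT_1$ operationally rich at each relation in $\cadef{\MT_1}$, and $\MT_1$ certainly dualises $\MB$ at the finite level. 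By the first fact, $\MT_2$ is then operationally rich at every relation in $R_1\cup\dom{H_1}$; together with hypothesis~(2) --- which covers $(R_2\setminus R_1)\cup\dom{H_2\setminus H_1}$ --- this gives~\ref{su:init}{\Aop}. By the second fact, $\MT_2$ dualises $\MB$ at the finite level, so~\ref{su:init}{\Ahom} holds by the Duality Lemma~\ref{lem:dual}. For~\ref{su:more}{\Aaxi}, fix any pure basis $\Sigma_1$ for $\ThuH{\MT_1}$ (Lemma~\ref{lem:pure}); for each $\phi\in\Sigma_1$ the relation $\rel{\MT_\Omega}{\phi}$ is defined by the conjunct-atomic premise of $\phi$ and hence lies in $\cadef{\MT_1}$, so $\MT_1$ --- and therefore $\MT_2$ --- is operationally rich at it, and Lemma~\ref{lem:q} upgrades this to operational richness at $\rel{\MT_2}{\phi^\natural}$.

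By Lemma~\ref{lem:wildtrans} it then remains to show that the transfer functor $T_{21}=F_1\circ S_2$ carries every [finite] structure of $\CX_2$ into $\CX_1$, and it is here that the structural-reduct hypothesis is essential. The plan is to compare $T_{21}$ on $\CX_2$ with the canonical structural-reduct functor $G\colon\CX_2\to\CX_1$ induced by $\MT_1\le\MT_2$ (see~\cite{DHW:struct,DPW:galois}), which sends a closed substructure $\X\le\MT_2^S$ to its induced $\MT_1$-structure inside $\MT_1^S$ and so automatically lands in $\CX_1$. Both functors fix underlying sets and set-maps, and $T_{21}(\MT_2)=\MT_1=G(\MT_2)$ by Note~\ref{not:sharp}. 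To see that they agree on all of $\CX_2$, it suffices, for each $r$ among $R_1\cup\dom{H_1}\cup\graph{H_1}$, to check that the primitive-positive formula that $S_2$ uses to interpret $r$ (one of $\beta_r$, $\beta_{\dom h}$, $\beta_{\graph h}$) and the description of $r$ used by $G$ (conjunct-atomic for relations and domains, a $\clo{\MT_2}$-term for operations) define the same relation throughout $\CX_2=\IScP{\MT_2}$: the implication from the conjunct-atomic side to the $\beta$-formula is (equivalent to) a conjunction of uH-sentences true in $\MT_2$, hence true in $\CX_2$, while the converse comes out exactly as in the proof of Lemma~\ref{lem:reduct}, using the operational richness of $\MT_2$ at $r$ established above (and Lemma~\ref{lem:graph-dom} to pass between $\dom h$ and $\graph h$). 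Thus $T_{21}(\X)=G(\X)\in\CX_1$ for every $\X\in\CX_2$, and Lemma~\ref{lem:wildtrans} yields~(1) at both the finite and the infinite level.

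I expect this final step --- equivalently, the containment $T_{21}(\CX_2)\subseteq\CX_1$ --- to be the main obstacle. Without the structural-reduct hypothesis one is left only with $T_{21}(\CX_2)\subseteq\CY_1$, which at the infinite level is genuinely weaker (this is exactly the gap that stops $\ThT_h$ from fully dualising $\ThB$; see Remark~\ref{rem:thth}). The point that closes it is that operational richness of $\MT_2$ at the `old' relations $R_1\cup\dom{H_1}$ --- supplied by the full duality for $\MT_1$ --- makes the hom-minimal witnesses encoded in the formulae $\beta_r$ definable by $\MT_2$-terms, hence present inside every member of $\CX_2$; everything else is routine manipulation of pure uH-sentences.
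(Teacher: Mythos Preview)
Your argument is correct in substance, and it uses the same engine (Lemma~\ref{lem:wildtrans}) as the paper, but you work much harder than necessary because you miss one reduction. The paper first replaces~$\MT_2$ by the structurally equivalent alter ego $\langle M; H_1\cup H_2, R_1\cup R_2,\T\rangle$, so that without loss of generality $\MT_1$ is an actual \emph{reduct} of~$\MT_2$. After this move, condition~\ref{su:more}{\Aaxi} is vacuous (every conclusion of every $\phi\in\Sigma_1$ is already in the language of~$\MT_2$), and the transfer functor $T_{21}=F_1\circ S_2$ is literally the forgetful functor by Lemma~\ref{lem:forget}(1), so $T_{21}(\CX_2)\subseteq\CX_1$ is immediate. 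Your verification of~{\Aaxi} via $\cadef{\MT_1}$ and Lemma~\ref{lem:q}, and your comparison of $T_{21}$ with the structural-reduct functor~$G$, are thus avoidable detours --- though they do work, and arguably show more clearly \emph{why} the structural-reduct hypothesis is what makes the proof go through at the infinite level.

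One small slip: in your final paragraph you have the two directions of the equivalence $\Psi_r\leftrightarrow\beta_r$ reversed. The implication $\beta_r\to\Psi_r$ (from the primitive-positive side to the conjunct-atomic side) is the one that is logically equivalent to a conjunction of uH-sentences, since the existential in $\beta_r$ becomes a universal in the premise; the implication $\Psi_r\to\beta_r$ is the one that needs operational richness of~$\MT_2$ at~$r$ to eliminate the existential in the conclusion, exactly as in the second half of the proof of Lemma~\ref{lem:reduct}. The argument survives the swap, but the labelling should be corrected.
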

\begin{proof}
By the Full Duality Lemma~\ref{lem:fdual}, it suffices to prove (2)\,$\Rightarrow$\,(1).
Assume that~(2) holds. Without loss of generality, we can assume $\MT_1$ is a reduct of~$\MT_2$.

Every hom-minimal relation on $\MB$ belongs to $\cadef {\MT_1} \subseteq \cadef {\MT_2}$,
by the Duality Lemma~\ref{lem:dual}. Thus~\ref{su:init}{\Ahom} holds.
Now let $r\in R_2 \cup \dom {H_2}$. Using the Full Duality Lemma~\ref{lem:fdual},
if $r\in R_1 \cup \dom {H_1}$, then $\MT_1$ is operationally rich at~$r$,
and so $\MT_2$ is too. Otherwise, condition~(2) ensures that $\MT_2$ is operationally
rich at~$r$. Thus~\ref{su:init}{\Aop} holds.
Since the language of $\MT_1$ is contained in that of~$\MT_2$, it follows
immediately that~\ref{su:more}{\Aaxi} holds.

We now apply Lemma~\ref{lem:wildtrans} to show that
$\MT_2$ fully dualises $\MB$ [at the finite level].
Since $\MT_1$ is a reduct of~$\MT_2$, the transfer functor
$T_{21} := F_1  S_2 \colon \CY_2 \to \CY_1$
is the forgetful functor, by Lemma~\ref{lem:forget}(1).
It follows that $T_{21}$ sends each structure in~$\CX_2$ into~$\CX_1$, as required.
\end{proof}

We will now illustrate the general New-from-old Theorem using an important
example from natural duality theory: the first known
full-but-not-strong duality.

\begin{example}\label{ex:r1}
Define the four-element lattice-based algebra
\[
\QB := \langle \{0,a,b,1\}; t, \vee, \wedge, 0, 1\rangle,
\]
where $0 < a < b < 1$ and the operation $t$ is the ternary
discriminator. Define two alter egos of $\QB$:
\[
\QT_0 := \langle \{0,a,b,1\}; \graph f,\T\rangle
 \quad\text{and}\quad
\QT_1 := \langle \{0,a,b,1\}; f,g,\T\rangle,
\]
where the partial automorphisms $f$ and $g$ of~$\QB$ are shown
in Figure~\ref{fig:fg}.

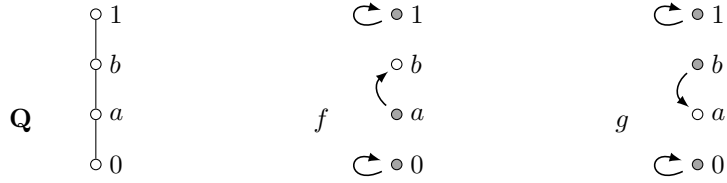
\begin{figure}[t]
\begin{tikzpicture}
   \begin{scope}
      \node[anchor=base] at (-1,0.5) {$\QB$};
      \node[unshaded] (0) at (0,0) {}; \node[label] at (0) [right=5pt] {$0$};
      \node[unshaded] (a) at (0,0.667) {}; \node[label] at (a) [right=5pt] {$a$};
      \node[unshaded] (b) at (0,1.333) {}; \node[label] at (b) [right=5pt] {$b$};
      \node[unshaded] (1) at (0,2) {}; \node[label] at (1) [right=5pt] {$1$};
      \draw[order] (0) to (a); \draw[order] (a) to (b); \draw[order] (b) to (1);
   \end{scope}
   \begin{scope}[xshift=4cm]
      \node[anchor=base] at (-1,0.5) {$f$};
      \node[shaded] (0) at (0,0) {}; \node[label] at (0) [right=5pt] {$0$};
      \node[shaded] (a) at (0,0.667) {}; \node[label] at (a) [right=5pt] {$a$};
      \node[unshaded] (b) at (0,1.333) {}; \node[label] at (b) [right=5pt] {$b$};
      \node[shaded] (1) at (0,2) {}; \node[label] at (1) [right=5pt] {$1$};
      \draw[loopy] (0) to [out=-155, in=-195] (0);
      \draw[short] (a) to [bend left] (b);
      \draw[loopy] (1) to [out=-155, in=-195] (1);
   \end{scope}
   \begin{scope}[xshift=8cm]
      \node[anchor=base] at (-1,0.5) {$g$};
      \node[shaded] (0) at (0,0) {}; \node[label] at (0) [right=5pt] {$0$};
      \node[unshaded] (a) at (0,0.667) {}; \node[label] at (a) [right=5pt] {$a$};
      \node[shaded] (b) at (0,1.333) {}; \node[label] at (b) [right=5pt] {$b$};
      \node[shaded] (1) at (0,2) {}; \node[label] at (1) [right=5pt] {$1$};
      \draw[loopy] (0) to [out=-155, in=-195] (0);
      \draw[short] (b) to [bend right] (a);
      \draw[loopy] (1) to [out=-155, in=-195] (1);
   \end{scope}
\end{tikzpicture}
\caption{The partial automorphisms $f$ and $g$ of $\QB$}\label{fig:fg}
\end{figure}

By the Quasi-primal Strong Duality
Theorem~\cite[3.3.13]{CD:book}, the alter ego $\QT_1$
strongly dualises~$\QB$. Since $\QT_0$ and $\QT_1$ are
clearly not structurally equivalent, the alter ego $\QT_0$
cannot strongly dualise~$\QB$. Nevertheless, the alter ego
$\QT_0$ fully dualises~$\QB$: Clark, Davey and
Willard~\cite{CDW:!} gave three different proofs to celebrate
this discovery; we will now give yet another proof.

Since we know that $\QT_1$ dualises~$\QB$, it follows easily
that $\QT_0$ dualises~$\QB$. Thus $\QT_0$
satisfies~\ref{su:init}{\Ahom}, by the Duality
Lemma~\ref{lem:dual}. Since $\graph f$ is hom-minimal on~$\QB$,
it is trivial that $\QT_0$ satisfies~\ref{su:init}{\Aop}.

As mentioned in Example~\ref{ex:qp}, every quasi-primal algebra
is strongly dualised by a standard alter ego. So $\QT_1$ is standard,
by Theorem~\ref{cor:std}. It is easy to check that the following
three uH-sentences form a basis for $\ThuH {\QT_1}$:
\begin{enumerate}
 \item $\forall uv\, \bigl[f(u) = v \to g(v) = u\bigr]$;
 \item $\forall uv\, \bigl[ g(u) = v \to f(v) = u\bigr]$;
 \item $\forall uvw \, \bigl[\bigl(f(u) = v \And f(v) = w \bigr) \to u = v\bigr]$.
\end{enumerate}
Sentence (3) is pure, but sentences (1) and (2) are not.
Sentence (1) converts into two pure uH-sentences:
\begin{enumerate}
 \item[(1a)] $\forall uv\, \bigl[f(u) = v \to g(v) = g(v)\bigr]$;
 \item[(1b)] $\forall uvw\, \bigl[(f(u) = v \And g(v) = w) \to w = u\bigr]$.
\end{enumerate}
The purification of~(2) is the pair of sentences~(2a) and~(2b) obtained from~(1a) and~(1b)
by interchanging $f$ and~$g$. Of the five sentences~(1a), (1b), (2a), (2b) and~(3),
only~(1a) and~(2a) have conclusions not in the language of~$\QT_0$.
Since $\rel {\MT_\Omega} {\mathrm{1a}} = \graph f$
and $\rel {\MT_\Omega} {\mathrm{2a}} = \graph g$,
both of which are hom-minimal, it follows from Lemma~\ref{lem:q}
that $\QT_0$ satisfies~\ref{su:more}{\Aaxi} with respect to these five sentences.
Thus $\QT_0$ is standard and fully dualises~$\QB$, by the New-from-old Theorem~\ref{thm:trans}.
\end{example}

To use the New-from-old Theorem directly, we need first to have
come up with a candidate alter ego $\MT_2$ that is going to
fully dualise $\MB$ [at the finite level]. But we can easily
adapt the New-from-old Theorem into an algorithm that can help
you to find, for your favourite finite algebra~$\MB$,
an alter ego of~$\MB$ that is equivalent to the smallest
full-at-the-finite-level alter ego~$\MT_\alpha$.

\begin{algorithm}\label{cor:alg}
Let $\MB$ be a finite algebra. You need the following:
\begin{enumerate}
 \item[(i)] An alter ego $\MT_0 = \langle M; H_0, R_0, \T\rangle$
 of~$\MB$ such that
 \begin{enumerate}
 \item[(a)] $\MT_0$ dualises $\MB$ at the finite level,
 \item[(b)] $\MT_0$ is operationally rich at each relation in $R_0 \cup\dom {H_0}$, and
 \item[(c)] $\MT_0$ is a reduct of $\MT_\alpha$.
 \end{enumerate}
(The easiest way to guarantee that~(c) holds is to ensure
that the signature of $\MT_0$ includes only total operations and
hom-minimal relations.)
 \item[(ii)] An alter ego $\MT_1$ that fully dualises $\MB$ at the finite level.
 \item[(iii)] A finite basis $\Sigma_1$ for the uH-theory of~$\MT_1$.
\end{enumerate}
Start with $\MT_2 := \MT_0$. Then an alter ego equivalent to $\MT_\alpha$
can be obtained by adding partial operations to the signature of~$\MT_2$
as follows.

For each uH-sentence $\psi \in \Sigma_1$ whose conclusion is not in the language of~$\MT_0$,
complete the following steps:
\begin{enumerate}
 \item Convert $\psi$ into a set of pure uH-sentences~$\phi_1,\dotsc,\phi_n$.
 \item For each $i \in \{1,\dotsc,n\}$ such that the conclusion of~$\phi_i$
     is not in the language of~$\MT_0$, calculate the relation $r_i$ on~$M$ as follows:
 \begin{enumerate}
 \item if the premise of $\phi_i$ is in the language of~$\MT_0$,
      then $r_i := \rel {\MT_0}{\phi_i}$;
 \item if the premise of $\phi_i$ is not in the language of~$\MT_0$,
      then $r_i := \rel {\MT_0}{\phi_i^\natural}$.
 \end{enumerate}
 (See~\ref{defn:nat} and~\ref{lem:q}.)
 \item For each relation $r_i$ calculated in step~(2),
 add all the compatible partial operations
 on~$\MB$ with domain~$r_i$ to the signature of~$\MT_2$.
\end{enumerate}
At the end of this process, you will have $\MT_2 \equiv \MT_\alpha$.
\end{algorithm}

We finish this section by demonstrating this algorithm on the
bounded lattice~$\ThB$, whereby we shall `rediscover' the
partial operation~$h$ used in Section~\ref{sec:three}.

\begin{example}\label{ex:three}
Consider the bounded lattice $\ThB = \langle
\{0,a,1\}; \vee,\wedge,0,1\rangle$, and define the two alter egos
\[
\ThT_0 := \langle \{0,a,1\}; f,g,\T \rangle
 \quad\text{and}\quad
\ThT_1 := \langle \{0,a,1\}; f,g,\sigma,\T \rangle,
\]
as in Section~\ref{sec:three}. Then $\ThT_0$ and $\ThT_1$
dualise and strongly dualise~$\ThB$, respectively.

A uH-axiomatisation for $\ThT_1$ is given in the proof of
Lemma~\ref{lem:sharp1}:
\begin{enumerate}
 \item $\forall v\,\bigl[f(v) = f(f(v)) = g(f(v)) \And g(v) =
     f(g(v)) = g(g(v))\bigr]$;
 \item $\forall uvw\,\bigl[\bigl(f(w) = u \And  g(w) =
     v\bigr) \leftrightarrow \sigma(u,v) = w\bigr]$;
 \item $\forall uv\,\bigl[\bigl(\sigma(u,v) = \sigma(u,v) \And
     \sigma(v,u) = \sigma(v,u) \bigr)\to u = v\bigr]$;
 \item $\forall uvw\,\bigl[\bigl(\sigma(u,v) = \sigma(u,v) \And
     \sigma(v,w) = \sigma(v,w) \bigr) \to \sigma(u,w) = \sigma(u,w)\bigr]$.
\end{enumerate}
We only need to consider~(4) and the forward direction of~(2).

The forward direction of~(2) converts into a pair of pure uH-sentences,
of which we need only consider~(2a):
\begin{enumerate}
\item [(2a)] $\forall uvw\,\bigl[\bigl(f(w) = u \And  g(w) = v \bigr) \to
     \sigma(u,v) = \sigma(u,v)\bigr]$;

\item [(2b)] $\forall uvwx\,\bigl[\bigl(f(w) = u \And  g(w) = v \And \sigma(u,v) = x\bigr) \to
     x = w\bigr]$.
\end{enumerate}
The premise of~(2a) defines the ternary relation
$\graph \sigma = \{000, 01a, 111\}$. This relation is hom-minimal on~$\ThB$,
so every compatible partial operation with domain $\graph \sigma$ already
has an extension in~$\clo {\ThT_0}$.

Using~(2), we can rewrite~(4) as
\begin{equation}
\begin{aligned}
 \forall uvwxy\,\bigl[\bigl(f(x) = u \And g(x) = v \And f(y) = v \And &g(y) = w\bigr)\to\\
 &\sigma(u,w) = \sigma(u,w)\bigr].
\end{aligned}
 \tag*{(4)$'$}
\end{equation}
Note that the premise of (4)$'$ is in the language of $\ThT_0$,
so step~(2)(a) of Algorithm~\ref{cor:alg} applies. The premise of~(4)$'$
defines the $5$-ary relation
\[
r := \rel {\MT_0}{(4)'} = \{ 00000, 0010a, 011a1, 11111 \}.
\]
This relation forms a four-element chain, and so there are six
homomorphisms from $\rb$ to~$\ThB$. Thus there is only one compatible
partial operation on~$\ThB$ with domain~$\rb$ that is not the restriction of a
projection. We could just add this $5$-ary partial operation to the
signature of~$\ThT_0$, and we would be done.

But instead, we note from the premise of~(4)$'$ that the
$5$-ary relation $r$ is isomorphic (via a projection) to the
binary relation defined by $g(x) = f(y)$, which is $\dom h = \{ 00, 0a, a1, 11 \}$.
The missing partial operation with domain $\rb$
is a restriction of $h(\pi_4,\pi_5)$. Since $\dom h$ is in $\cadef {\ThT_0}$
and since every compatible partial operation with domain $\dom h$ is generated
from the projections by $f,g,h$, we can add $h$ to $\ThT_0$ to obtain the familiar
alter ego~$\ThT_2 := \langle \{0,a,1\}; f,g,h, \T \rangle \equiv \ThT_\alpha$.
\end{example}

\section{Distinguishing full dualities}\label{sec:end}

In this final section, we clarify the precise sense in which there can be
two `different' full dualities based on the same algebra~$\MB$.
We first recall the categorical description of structural
equivalence; see Davey, Haviar and Willard~\cite[p.~404]{DHW:struct}.

\begin{lemma}\label{lem:se-functor}
Let\/ $\MT_1$ and\/ $\MT_2$ be alter egos of a finite
algebra\/~$\MB$. For $i \in \{1,2\}$\up, define $\CX_i :=
\IScP{\MT_i}$. Then the following are equivalent\up:
\begin{enumerate}
 \item $\MT_1$ and\/ $\MT_2$ are structurally
     equivalent\up;
 \item there is a concrete category isomorphism\/
     $F \colon \CX_2 \to \CX_1$ such that
\begin{enumerate}
 \item $F(\MT_2) = \MT_1$\up, and
 \item both $F$ and $F^{-1}$ preserve structural embeddings.
\end{enumerate}
\end{enumerate}
Moreover\up, we can take $F$ to be the natural `forgetful'
functor.
\end{lemma}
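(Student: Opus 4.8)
The plan is to prove the two implications by opposite methods: for (1)$\Rightarrow$(2) I would \emph{build} the forgetful functor out of the structural-reduct data, and for (2)$\Rightarrow$(1) I would \emph{read off} the structural-reduct data from the categorical isomorphism. The second implication is the delicate one, and the final ``moreover'' clause will drop out of the construction used for the first.

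For (1)$\Rightarrow$(2), assume $\MT_1$ and $\MT_2$ are structurally equivalent. Using the data witnessing that $\MT_1$ is a structural reduct of $\MT_2$, fix for each relation symbol $s$ in $R_1\cup\dom{H_1}$ a conjunct-atomic formula $\Psi_s$ in the language of $\MT_2$ that defines $s$ in $\MT_2$, and for each $h\in H_1$ a term $t_h$ whose value function in $\clo{\MT_2}$ extends $h$. Define $F\colon\CX_2\to\CX_1$ on objects by letting $F(\X)$ carry the underlying set and topology of $\X$, with each $s$ interpreted by $\Psi_s$ in $\X$ and each $h$ by $t_h^\X$ restricted to the relation defined by $\Psi_{\dom h}$ in $\X$, and on morphisms by the identity on underlying maps. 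There are four things to check. (i) $F(\MT_2)=\MT_1$, immediately from the choice of the $\Psi_s$ and $t_h$. (ii) $F$ lands in $\CX_1$: conjunct-atomic formulas and clone terms are evaluated coordinatewise and are absolute between a topologically closed substructure and its ambient structure, so $F$ commutes with powers and with forming substructures; hence, for $\X$ a closed substructure of $\MT_2^S$, the structure $F(\X)$ is a closed substructure of $F(\MT_2^S)=\MT_1^S$, so $F(\X)\in\IScP{\MT_1}=\CX_1$. (iii) $F$ is functorial: every $\CX_2$-morphism preserves conjunct-atomic formulas and clone terms, hence is a $\CX_1$-morphism between the $F$-images, and identities and composites are respected. (iv) The symmetric construction $G\colon\CX_1\to\CX_2$, from the data witnessing that $\MT_2$ is a structural reduct of $\MT_1$, is inverse to $F$: the composites $GF$ and $FG$ fix underlying set-maps, send $\MT_2$ and $\MT_1$ respectively to themselves, and commute with powers and substructures, hence are the identity functors. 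Finally, an embedding in $\CX_i$ is exactly an isomorphism onto a closed substructure; since $F$ commutes with forming substructures and preserves isomorphisms, both $F$ and $F^{-1}=G$ preserve embeddings. (Nullary operations and the empty structure need the usual minor separate attention.) This gives (2), with $F$ the forgetful functor.

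For (2)$\Rightarrow$(1), let $F\colon\CX_2\to\CX_1$ be a concrete category isomorphism with $F(\MT_2)=\MT_1$ such that $F$ and $F^{-1}$ preserve embeddings. A concrete isomorphism preserves products, so $F$ sends each power $\MT_2^S$ to a product of copies of $F(\MT_2)=\MT_1$ on the underlying set $M^S$; the product projections are jointly injective set-maps fixed by the concrete functor, so the comparison isomorphism $F(\MT_2^S)\to\MT_1^S$ is the identity, and $F(\MT_2^S)=\MT_1^S$. Since every object of $\CX_i$ is a closed substructure of a power and both $F$ and $F^{-1}$ preserve embeddings, $F$ then restricts to an underlying-set-preserving, order-preserving bijection between the closed substructures of $\MT_2^S$ and those of $\MT_1^S$; in particular it carries the substructure of $\MT_2^S$ on a closed set $X$ to that of $\MT_1^S$ on $X$, and, being concrete on morphisms, it yields $\CX_2(\X,\MT_2)=\CX_1(F(\X),\MT_1)$ as sets of maps into $M$ for every such $\X$. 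It remains to deduce from this purely concrete-categorical data that $\cadef{\MT_1}=\cadef{\MT_2}$ and $\clo{\MT_1}=\clo{\MT_2}$, whence $\MT_1$ and $\MT_2$ are structurally equivalent; for this one invokes the characterisation --- the technical heart of \cite{DHW:struct} --- of which relations on $M$ are conjunct-atomic definable from $\MT$, and which partial operations lie in $\clo\MT$, solely in terms of the closed substructures of the powers of $\MT$ and the hom-sets into $\MT$. I expect this last step --- translating between the concrete notion of substructure and the syntactic closure operators $\cadef$ and $\clo$ --- to be the main obstacle; it is here that the hypothesis that $F$ and $F^{-1}$ preserve embeddings does its essential work, since without it there are concrete category isomorphisms fixing the alter ego that are \emph{not} structural equivalences, which is precisely the phenomenon examined in Section~\ref{sec:end}.
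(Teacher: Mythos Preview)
The paper does not actually prove this lemma: it is stated as a recalled result, with the proof deferred entirely to~\cite[p.~404]{DHW:struct}. So there is no in-paper argument to compare your proposal against.

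That said, your outline is sound and matches the shape of the argument one finds in~\cite{DHW:struct}. The forward direction is exactly as you describe: build the forgetful functor from the conjunct-atomic formulas and clone terms, and check it commutes with products and closed substructures. For the reverse direction, you have correctly isolated the crux---that a concrete, embedding-preserving isomorphism forces the two alter egos to have the same closed substructures of each power $\MT^n$ and the same hom-sets into~$\MT$---and you are right that the remaining step is to invoke the characterisation of $\cadef\MT$ and $\clo\MT$ in terms of this data. Concretely, an $n$-ary compatible relation $r$ lies in $\cadef\MT$ if and only if $r$ is the underlying set of a substructure of~$\MT^n$, and a compatible partial operation $h$ with domain $r\in\cadef\MT$ has an extension in $\clo\MT$ if and only if $h\in\CX(\rb,\MT)$, where $\rb$ is that substructure. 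This is indeed the technical content of~\cite{DHW:struct}, and once stated, the deduction of structural equivalence is immediate. Your closing remark about the necessity of the embedding-preservation hypothesis is apt and is precisely the point of Remark~\ref{rem:embed}.
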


Using our transfer set-up from Section~\ref{sec:trans}, we obtain the following similar result.
As mentioned in the introduction, if two alter egos $\MT_1$ and $\MT_2$ both fully dualise~$\MB$,
then the categories $\IScP{\MT_1}$ and $\IScP{\MT_2}$ are equivalent, as they are both
dually equivalent to~$\CA = \ISP{\MB}$. We now show that these two categories are in fact
concretely isomorphic.

\begin{lemma}\label{lem:transfer2}
Let\/ $\MT_1$ and\/ $\MT_2$ be alter egos of a finite
algebra\/~$\MB$. For $i \in \{1,2\}$\up, define $\CX_i :=
\IScP{\MT_i}$. Assume that\/ $\MT_1$ fully dualises~$\MB$.
Then the following are equivalent\up:
\begin{enumerate}
 \item $\MT_2$ fully dualises $\MB$\up;
 \item there is a concrete category isomorphism\/
     $F \colon \CX_2 \to \CX_1$ such that
\begin{enumerate}
 \item $F(\MT_2) = \MT_1$\up, and
 \item $F$ preserves structural embeddings of the form $\X
     \stackrel{\mathrm{incl}}{\hookrightarrow}
     (\MT_2)^S$\up, where $X$ is closed under all
     compatible partial operations on~$\MB$.
\end{enumerate}
\end{enumerate}
Moreover\up, if\/ $\MT_1$ is a structural reduct of\/~$\MT_2$\up,
then we can take $F$ to be the natural `forgetful' functor\up, and
if\/ $\MT_2$ is a structural reduct of\/~$\MT_1$\up, then we
can take $F^{-1}$ to be the natural `forgetful' functor.
\end{lemma}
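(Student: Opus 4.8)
The plan is to deduce both implications from the transfer machinery of Sections~\ref{sec:sharp}--\ref{sec:trans}, together with the New-from-old Lemma~\ref{lem:transfer}. For (1)\,$\Rightarrow$\,(2), suppose $\MT_2$ fully dualises~$\MB$; in particular it does so at the finite level, so by the Full Duality Lemma~\ref{lem:fdual} and Note~\ref{not:axi} it satisfies~\ref{su:init}{\Ahom}, \ref{su:init}{\Aop} and~\ref{su:more}{\Aaxi} with respect to any pure basis $\Sigma_1$ of $\ThuH{\MT_1}$ (chosen via Lemma~\ref{lem:pure}). As $\MT_1$ also fully dualises~$\MB$ at the finite level, we may symmetrically form a sharp functor $S_1$; by Remark~\ref{rem:symm} and Lemma~\ref{lem:transf} both transfer functors $T_{21} = F_1\circ S_2\colon\CY_2\to\CY_1$ and $T_{12} = F_2\circ S_1\colon\CY_1\to\CY_2$ are well defined, by Lemma~\ref{lem:inv} they are mutually inverse category isomorphisms, they are concrete because each $S_i$ fixes underlying sets, topologies and set-maps (Definition~\ref{def:sfunctor}) and each $F_i$ is forgetful, and $T_{21}(\MT_2) = F_1(S_2(\MT_2)) = F_1(\MT_\Omega) = \MT_1$ by Note~\ref{not:sharp}. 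Applying Lemma~\ref{lem:wildtrans} in both orientations (each legitimate because $\MT_1$ and $\MT_2$ both fully dualise~$\MB$) gives $T_{21}(\CX_2)\subseteq\CX_1$ and $T_{12}(\CX_1)\subseteq\CX_2$, so the restriction $F$ of $T_{21}$ to~$\CX_2$ is a concrete category isomorphism $\CX_2\to\CX_1$, with inverse the restriction of~$T_{12}$, satisfying $F(\MT_2)=\MT_1$.

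It then remains to show that this $F$ preserves every embedding $\iota\colon\X\hookrightarrow(\MT_2)^S$ for which $X$ is closed under all compatible partial operations on~$\MB$. The key point is that for such an~$\X$ the structure $S_2(\X)$ is exactly the substructure of $(\MT_\Omega)^S$ on~$X$. Indeed, for each compatible relation $r$ the formula $\beta_r(\vec v)=\exists\vec w\,\widehat\beta_r(\vec v,\vec w)$ of Definition~\ref{defn:betas_pt2} picks out in $(\MT_2)^S$ precisely the tuples lying coordinatewise in~$r$, and the unique witness $\vec w$ is obtained coordinatewise by applying the homomorphisms $f_1,\dotsc,f_m\in\CA(\rb,\MB)$ from Definition~\ref{defn:betas_pt1}; since each $f_j$ is a compatible partial operation with domain~$r$, closure of~$X$ under compatible partial operations forces this witness into~$X$, whence $r^{S_2(\X)}=r^{(\MT_\Omega)^S}\cap X^n$, and similarly for graphs of partial operations. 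As $S_2((\MT_2)^S)=(\MT_\Omega)^S$ by Lemma~\ref{lem:forget}(2), applying $S_2$ and then $F_1$ to~$\iota$ yields the substructure inclusion $F(\X)\hookrightarrow(\MT_1)^S$, as required.

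For (2)\,$\Rightarrow$\,(1), I would verify the two hypotheses of the New-from-old Lemma~\ref{lem:transfer} in its non-bracketed form. For hypothesis~(2) of that lemma, given $\X\in\CX_2$ put $\X':=F(\X)$: since $F$ is concrete with $F(\MT_2)=\MT_1$, the hom-sets $\CX_2(\X,\MT_2)$ and $\CX_1(F(\X),\MT_1)$ are literally the same set of maps $X\to M$, so $\X'$ lies on the same underlying set as~$\X$ and $E_2(\X)=E_1(\X')$. For hypothesis~(1), that $\MT_2$ dualises~$\MB$: for each $\A\in\CA$ the underlying set $\CA(\A,\MB)$ of $D_2(\A)$ is closed under all compatible partial operations on~$\MB$, so $D_2(\A)\hookrightarrow(\MT_2)^A$ is an embedding of the special form singled out in~(2); as a category isomorphism $F$ preserves products and sends $\MT_2$ to $\MT_1$, so it carries this to an embedding $F(D_2(\A))\hookrightarrow(\MT_1)^A$, and concreteness forces $F(D_2(\A))=D_1(\A)$. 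Hence the morphisms $D_2(\A)\to\MT_2$ are exactly the morphisms $D_1(\A)\to\MT_1$ as set-maps, and since $\MT_1$ dualises~$\MB$ these are all evaluations; therefore $e_\A\colon\A\to E_2D_2(\A)$ is surjective, hence an isomorphism. The New-from-old Lemma~\ref{lem:transfer} then gives that $\MT_2$ fully dualises~$\MB$.

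For the last two sentences of the statement: if $\MT_1$ is a structural reduct of~$\MT_2$ then, replacing it up to structural equivalence by a genuine reduct via Lemma~\ref{lem:se-functor}, Lemmas~\ref{lem:reduct} and~\ref{lem:forget}(1) identify $T_{21}=F_1\circ S_2$ with the natural forgetful functor; symmetrically, when $\MT_2$ is a structural reduct of~$\MT_1$, the functor $T_{12}=F_2\circ S_1$, and hence $F^{-1}$, is the natural forgetful functor. I expect the main obstacle to be the embedding-preservation step of (1)\,$\Rightarrow$\,(2), together with the dual identification $F(D_2(\A))=D_1(\A)$ in (2)\,$\Rightarrow$\,(1): both rest on recognising that the hypothesis `$X$ is closed under all compatible partial operations on~$\MB$' is exactly what guarantees that the primitive-positive formulas $\beta_r$ find their existential witnesses inside~$X$, so that the sharp functor behaves as a genuine substructure embedding on the inclusions singled out in~(2).
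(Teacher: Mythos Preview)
Your proposal is correct and follows essentially the same route as the paper: both directions are handled via the transfer functors $T_{21}$, $T_{12}$ together with the New-from-old Lemma~\ref{lem:transfer}, and in (2)\,$\Rightarrow$\,(1) both arguments identify $F(D_2(\A))$ with $D_1(\A)$ by exploiting that $\CA(\A,\MB)$ is closed under all compatible partial operations. The one place you work harder than necessary is the embedding-preservation step in (1)\,$\Rightarrow$\,(2): rather than computing directly that the $\beta_r$-witnesses land in~$X$, the paper simply observes that such an~$\X$ equals $F_2(\X^\sharp)$ for the genuine substructure $\X^\sharp \le (\MT_\Omega)^S$, and then Lemma~\ref{lem:forget}(2) gives $T_{21}(\X) = F_1 S_2 F_2(\X^\sharp) = F_1(\X^\sharp) \le (\MT_1)^S$ in one line.
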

\begin{proof}
(2)\,$\Rightarrow$\,(1): Assume that we have
$F\colon \CX _2 \to \CX _1$ as in~(2).
Let $\CA := \ISP\MB$ and, for $i\in \{1,2\}$, let $D_i \colon \CA \to \CX_i$
and $E_i \colon \CX_i \to \CA$ denote the hom-functors
induced by~$\MB$ and~$\MT_i$.

We first show that $\MT_2$ dualises~$\MB$. Let $\A \in \CA$.
The functor $F$ preserves the structural embedding
$D_2(\A) \stackrel{\mathrm{incl}}{\hookrightarrow}(\MT_2)^A$, by~(2)(b),
and so $FD_2(\A)$ is an induced substructure of $F((\MT_2)^A)$.
We have $F(\MT_2)=\MT_1$, by~(2)(a). Therefore
\[
F((\MT_2)^A)=(F(\MT_2))^A=(\MT_1)^A,
\]
as the concrete category isomorphism $F\colon \CX_2 \to \CX_1$
preserves concrete products. The underlying set of $FD_2(\A)$ is
$\rhom\CA(\A,\MB)$, as $F$ also preserves underlying sets.
Hence $FD_2(\A)=D_1(\A)$, and therefore
\[
\rhom{\CX_2}(D_2(\A),\MT_2)=\rhom{\CX_1}(FD_2(\A),F(\MT_2))=\rhom{\CX_1}(D_1(\A),\MT_1),
\]
and so $E_2D_2(\A)=E_1D_1(\A)$. Since $\MT_1$ dualises~$\MB$,
it follows that $\MT_2$ does too.

For each structure $\X\in \CX_2$,
we have $\rhom{\CX_2}(\X,\MT_2)=\rhom{\CX_1}(F(\X),\MT_1)$. Hence $\MT_2$ fully
dualises~$\MB$, by the New-from-old Lemma~\ref{lem:transfer}.

(1)\,$\Rightarrow$\,(2): Assume that $\MT_2$ fully dualises~$\MB$.
Since $\MT_1$ also fully dualises~$\MB$, the transfer functors
$T_{21}\colon \CX_2 \to \CX_1$ and $T_{12}\colon \CX_1 \to \CX_2$ are well defined,
using Lemma~\ref{lem:wildtrans} twice. Thus $T_{21}\colon \CX_2 \to \CX_1$
is a concrete category isomorphism, by Lemma~\ref{lem:inv}.
We have $T_{21}(\MT_2) = \MT_1$, by Note~\ref{not:sharp},
and so $T_{21}$ satisfies~(2)(a).

Now let $\X$ be an induced substructure of $(\MT_2)^S$ with the property
that $X$ is closed under all
compatible partial operations on~$\MB$. Then $\X = F_2(\X^\sharp)$,
where $\X^\sharp \le (\MT_\Omega)^S$. Using Lemma~\ref{lem:forget}(2),
we have
\[
 T_{21}(\X) = T_{21} F_2(\X^\sharp) = F_1 S_2 F_2(\X^\sharp)
  = F_1(\X^\sharp) \le (\MT_1)^S.
\]
Note that $T_{21}((\MT_2)^S) = (T_{21}(\MT_2))^S = (\MT_1)^S$.
Hence $T_{21}$ satisfies~(2)(b).

If $\MT_1$ is a structural reduct of~$\MT_2$,
then the transfer functor $T_{21}$ is the natural `forgetful' functor,
by Lemma~\ref{lem:forget}(1).
Similarly, if $\MT_2$ is a structural reduct of~$\MT_1$, then
the inverse transfer functor $T_{12}$ is the natural `forgetful' functor.
\end{proof}

\begin{remark}\label{rem:embed}
We now demonstrate that
the notion of `structural embedding' we are using is not
always categorically expressible in the
concrete dual category arising from a full duality.

Our example is based on the quasi-primal algebra~$\QB$ from
Example~\ref{ex:r1}. We know that the two alter egos
\[
\QT_0 = \langle \{0,a,b,1\}; \graph f, \T \rangle
 \quad\text{and}\quad
\QT_1 = \langle \{0,a,b,1\}; f,g, \T \rangle
\]
fully dualise~$\QB$. For each $i \in \{0,1\}$, define
the dual category $\CX_i := \IScP{\QT_i}$.
By Lemma~\ref{lem:transfer2},
the natural `forgetful' functor $F \colon \CX_1 \to
\CX_0$ is a category isomorphism. The inverse category
isomorphism $F^{-1} \colon \CX_0 \to \CX_1$
preserves underlying sets and set-maps, but does not preserve
structural embeddings.

For example, consider the induced substructure~$\X$
of~$\QT_0$ with $X := \{ a \}$. The inclusion $i \colon \X \to \QT_0$
is a structural embedding. But the one-to-one morphism
$F^{-1}(i) \colon F^{-1}(\X) \to \QT_1$ is \emph{not} a structural embedding:
its image $\{a\}$ does not form an induced
substructure of~$\QT_1$, as it is not closed under~$f$.
(The morphism $F^{-1}(i)$ is an embedding in~$\CX_1$ in the
concrete category-theoretic sense; see Ad\'amek, Herrlich and
Strecker~\cite[Definition~8.6]{AHS:Joy}.)

In the strong dual category $\CX_1$, the structural embeddings
correspond exactly to surjections in the quasivariety $\CA = \ISP\QB$.
The dual category $\CX_0$ has more structural embeddings.
\end{remark}

Comparing Lemmas~\ref{lem:se-functor} and~\ref{lem:transfer2},
we see that it is the non-categorical nature of structural embeddings that
allows a finite algebra to have truly different full dualities.


\end{document}